\documentclass[a4paper]{article}
\usepackage{blindtext}
\usepackage[utf8]{inputenc}
\usepackage{amsmath}
\usepackage{amsthm}
\usepackage{amssymb,stmaryrd}
\usepackage{csquotes}

\usepackage[toc,page]{appendix}
\usepackage[explicit]{titlesec}

\usepackage[
backend=biber,
style=alphabetic,
sorting=nyt,
giveninits=true
]{biblatex}
\usepackage{graphicx}
\usepackage[english]{babel}
\usepackage{amsfonts}
\usepackage{tikz-cd}
\usepackage{tikz}
\usepackage{tikz}
\usetikzlibrary{matrix}
\usepackage[T1]{fontenc}
\usepackage{float}
\usepackage{graphicx}
\usepackage{caption}
\usepackage{subcaption}
\usepackage{hyperref}
\usepackage{diagbox}
\usepackage{url}
\usepackage{mathrsfs}
\usepackage{bm}
\hypersetup{
unicode=true
}
\usepackage{geometry}

\usepackage[inline]{enumitem}

\newcommand*{\QEDB}{\hfill\ensuremath{\square}}
\makeatletter
\newcommand{\colim@}[2]{%
  \vtop{\m@th\ialign{##\cr
    \hfil$#1\operator@font colim$\hfil\cr
    \noalign{\nointerlineskip\kern1.5\ex@}#2\cr
    \noalign{\nointerlineskip\kern-\ex@}\cr}}%
}
\newcommand{\colim}{%
  \mathop{\mathpalette\colim@{\rightarrowfill@\textstyle}}\nmlimits@
}
\newcommand{\prlim@}[2]{%
  \vtop{\m@th\ialign{##\cr
    \hfil$#1\operator@font lim$\hfil\cr
    \noalign{\nointerlineskip\kern1.5\ex@}#2\cr
    \noalign{\nointerlineskip\kern-\ex@}\cr}}%
}
\newcommand{\prlim}{%
  \mathop{\mathpalette\prlim@{\leftarrowfill@\textstyle}}\nmlimits@
}

\newcommand{\DMO}[1]{\DeclareMathOperator{#1}}
\newcommand{\wh}[1]{\widehat{#1}}
\DMO{\lar}{\longrightarrow}
\DMO{\lal}{\longleftarrow}
\DMO{\Spf}{Spf}
\DMO{\Mat}{Mat}
\DMO{\ars}{\twoheadrightarrow}
\DMO{\als}{\twoheadleftarrow}
\DMO{\ari}{\hookrightarrow}
\DMO{\ali}{\hookleftarrow}
\newcommand{\cosimpl}[1]{\text{cs}{#1}}
\newcommand{\simpl}[1]{\text{s}{#1}}
\makeatother
\usepackage[inline]{enumitem}

\numberwithin{equation}{subsection}
\newtheorem*{thrm*}{Theorem}
\newtheorem{thrm}[equation]{Theorem}
\newtheorem*{prop*}{Proposition}
\newtheorem{prop}[equation]{Proposition}
\newtheorem{lemma}[equation]{Lemma}
\newtheorem{theorem}[equation]{Theorem}
\newtheorem{cor}[equation]{Corollary}

\theoremstyle{definition}
\newtheorem*{dfn*}{Definition}
\newtheorem{dfn}[equation]{Definition}

\newtheorem{notation}[equation]{Notation}

\newtheorem{rmrk}[equation]{Remark}

\DMO{\op}{op}
\DMO{\Res}{Res}
\DMO{\coker}{CoKer}
\DMO{\Sym}{Sym}
\newcommand{\Mod}{\mathrm{Mod}}
\DMO{\id}{id}
\DMO{\cdgAlg}{cdg-Alg}
\DMO{\dgAlg}{dg-Alg}
\DMO{\dgMod}{dg-Mod}
\newcommand{\Top}{ {\mathrm{Top}} }
\DMO{\Fun}{Fun}
\DMO{\Bin}{Bin}
\DMO{\IBin}{IBin}
\DMO{\IGamma}{I\Gamma}
\DMO{\SP}{SP}
\DMO{\Sets}{Sets}
\DMO{\Tot}{Tot}
\DMO{\Barc}{Bar}
\DMO{\sSet}{sSet}
\DMO{\Ab}{Ab}
\DMO{\Hom}{Hom}
\DMO{\Alg}{Alg}
\DMO{\BiAlg}{BiAlg}
\DMO{\MooreSp}{M}
\DMO{\coMooreSp}{cM}
\DMO{\aN}{N}
\DMO{\aC}{C}
\DMO{\Ho}{Ho}
\DMO{\lspan}{span}

\DMO{\im}{Im}
\DMO{\Tors}{Tors}
\DMO{\aK}{K}
\DMO{\Fin}{Fin}
\DMO{\res}{res}
\DMO{\Ch}{Ch}
\DMO{\Coker}{CoKer}
\DMO{\Ker}{Ker}
\DMO{\Sk}{Sk}
\DMO{\Ext}{Ext}
\DMO{\EM}{EM}
\DMO{\pt}{pt}
\DMO{\Spec}{Spec}
\DMO{\hht}{{\hat{}}}
\DMO{\Maps}{Maps}
\DMO{\Set}{Set}
\DMO{\Gr}{Gr}
\DMO{\Tor}{Tor}
\DMO{\AW}{AW}
\DMO{\EZ}{EZ}
\DMO{\cof}{Cone}
\DMO{\DP}{DPT}
\newcommand\mb[1]
	{\mathbb{#1}}
\newcommand\os[2]
	{\overset{#1}{#2}}

\newcommand\ul[1]
	{\underline{#1}}
\newcommand\ol[1]
	{\overline{#1}}
\newcommand\mc[1]
	{\mathcal{#1}}

\newcommand{\ar}[1][]
{\xrightarrow{#1}}

\DMO{\zgqis}{\os{\sim}{\leftrightsquigarrow}}
\DMO{\kk}{\Bbbk}

\title{On Equivalences of Derived Exponential Functors}
\author{Alexander Zakharov
\footnote{
Faculty of Mathematics, Higher School of Economics, 6 Usacheva ulitsa, Moscow, Russia 119048;\\
Chennai Mathematical Institute, H1, SIPCOT IT Park, Siruseri, Kelambakkam, India, 603103;\\
email: xaxa3217 ( •\_•) gmail  com}
}
\date{}
\begin{document}
\maketitle
\begin{abstract}
	A strong symmetric monoidal functor 
	$F\colon (\Ab^{free,fg},\oplus)\ar (\Mod(\kk)^{flat},\otimes)$
	is determined by the Hopf algebra $F(\mb{Z})$ over the ring $\kk$.
	We will show that
	the algebra structure on the left derived functor 
	$\mb{L}^* F(P)$ can be recovered from 
	the augmented coalgebra structure on 
	$F(\mb{Z})$ for $P\in D^{\ge 3}_{perf}(\Ab)$.
	Using a similar technique 
	we will prove that the multiplicative Dold-Puppe-Thom isomorphism 
	$H_*(K(A,n);\mb{Z})\simeq \mb{L}_*\Sym A[n]$ 
	is functorial in $A\in \Ab^{fg}$ 
	whenever $n\ge 2$. By contrast, if $n<1$, this is known to be false in general.
\end{abstract}
\tableofcontents

\newpage

\section{Introduction}
Fix a base ring $\kk$ and assume
$F\colon (\Ab^{\mathrm{free,fg}},\oplus)\ar (\Mod(\kk)^{\mathrm{flat}},\otimes)$ is a strong symmetric monoidal functor 
from the category of finitely generated free abelian groups $\Ab^{free,fg}$ to the category 
of flat $\kk$-modules.
Following \cite{Touze}, we call such $F$ an \emph{exponential functor}.
The maps 
$$+\colon \mb{Z}\oplus \mb{Z}\ar \mb{Z},\ \Delta\colon \mb{Z}\ar \mb{Z}\oplus \mb{Z},\ 0\ar \mb{Z},\ \mb{Z}\ar 0,$$
provide $F(\mb{Z})$ with a commutative cocommutative Hopf algebra 
structure over $\kk$. Conversely, any co/commutative Hopf algebra gives a symmetric strong monoidal functor as above.

In the following we will use the cohomological grading notation with the default coefficients ring $\mb{Z}$.
Let $D^{<-1}_{perf}(\Ab)$ denotes the full subcategory of bounded complexes in $D(\Ab)$ with finitely generated cohomology concentrated
in degress $<-1$.
Denote by $D^{<-1}_{perf}(\Ab)^\vee\subset D^{>0}_{perf}(\Ab)$ the image of $D^{<-1}_{perf}(\Ab)$ under the functor $\mc{R}Hom(-,\mb{Z})$.

Since $P\in D^{>0}_{perf}(\Ab)$ can be represented by a complex of free modules in $\Ch^{\ge 0}(\Ab)$,
we can evaluate the \emph{left derived functor} of $F$ 
by the formula 
\begin{equation}\label{eq:derfun}
	\mb{L}F(P):=N_{\kk}(F(K_{\mb{Z}}(P)))\in D(\Mod(\kk)),
\end{equation} where 
we are using 
the usual Dold-Kan equivalence of the cosimplicial $\kk$-modules and the cochain complexes in non-negative degrees:
$N_{\kk}\colon \cosimpl{\Mod(\kk)} \substack{\lar\\\lal} \Ch^{\ge 0}(\Mod(\kk))\colon K_{\kk}$.

Let us recall that in the cosimplicial setting
the Moore's normalization functor $N$ is lax monoidal with respect to 
the Alexander-Whitney map $\AW\colon N(A)\otimes N(B)\ar N(A\otimes B)$ and 
is oplax monoidal with respect to the Eilenberg-Zilber map $\EZ\colon N(A\otimes B)\ar N(A)\otimes N(B)$. 
It is important that $\EZ$ and $\AW$ can be given by explicit formulas and, up to a homotopy,  are inverse to each other and symmetric.
This induces the corresponding lax/oplax structures on the inverse functor $K$.

In particular $$\mb{L}F\colon \big(D^{>0}_{perf}(\Ab),\oplus\big )\ar \big(D(\Mod(\kk)),\otimes\big)$$ 
is a strong symmetric monoidal functor. Using the isomorphism 
$\mb{L}F(P)\otimes \mb{L}F(P)\simeq \mb{L}F(P\oplus P)$ induced by $\EZ$ (the inverse of $\AW$),
we see that $\mb{L}F(P)$
is a Hopf algebra object in $D(\Mod(\kk))$.

Note that already in the simplest case $P=\mb{Z}[-n]$, the multiplication and the comultiplication on $\mb{L}F(\mb{Z}[-n])$ 
defined using the formula \eqref{eq:derfun} and well-known chain level-expressions for $\AW$ and $\EZ$,
\emph{a priori} rely on the whole Hopf algebra structure on $F(\mb{Z})$. Similarly, for two-term complex $P=[\mb{Z}\ar[p] \mb{Z}][-n]$
the very differential of the chain complex $\mb{L}F(P)$ involves the multiplication and comultiplication in $F(\mb{Z})$.

In this work we will show that if $P$ is sufficiently coconnected, then $\mb{L}F(P)$ depends only on the coalgebraic part of the Hopf algebra structure on $F(\mb{Z})$.
More precisely we have the following.
\begin{thrm*}[\ref{thrm:LF_LG_cs}]
	Assume $F,G$ are exponential functors
	provided with isomorphism 
	$F(\mb{Z})\simeq G(\mb{Z})$ of augmented coalgebras over $\kk$.
	Then there is an isomorphism of strong symmetric monoidal functors
	$$\mb{L}F\simeq \mb{L}G\colon \big(D^{<-1}_{perf}(\Mod(\kk))^{\vee},\oplus\big)\ar \big(D(\Mod(\kk)),\otimes\big).$$
\end{thrm*}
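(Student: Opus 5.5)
The plan is to reduce everything to a statement about cosimplicial objects built from the coalgebra $C=F(\mb{Z})$ alone, and to use the coconnectivity hypothesis $P\in D^{<-1}_{perf}(\Ab)^\vee$ to kill the dependence on the multiplication. A perfect complex $P$ of this form is quasi-isomorphic to a bounded complex of finitely generated free abelian groups concentrated in cohomological degrees $\ge 2$. Its cokernel-free part is a sum of shifts of $\mb{Z}$, and its torsion summands are two-term complexes $[\mb{Z}\ar[p]\mb{Z}]$ placed in degrees $\ge 2$. Then $K_{\mb{Z}}(P)$ is a cosimplicial free abelian group which is \emph{$1$-coreduced}: $K(P)^0=K(P)^1=0$.

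The key observation is about the cosimplicial structure maps. For a coreduced object of this kind, each cosimplicial degree $K(P)^m$ decomposes, via the Dold--Kan splitting, as a direct sum of copies of the normalized pieces indexed by epimorphisms $[m]\ars[k]$. Coface maps act on these summands by diagonal maps, projections and inclusions. Only codegeneracies and the extremal cofaces could involve sums $\mb{Z}\oplus\mb{Z}\ar\mb{Z}$, and the differential of $P$ enters as a map between different normalized pieces. So I would first establish a lemma: when $K(P)^0=K(P)^1=0$, every structure map of $K(P)$ lies in the subcategory of $\Ab^{free,fg}$ generated under $\oplus$ by diagonals $\Delta$, zero maps, and the maps $\mb{Z}\ar\mb{Z}$ given by integer multiplication.

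This lemma is the main obstacle. The codegeneracies $s^i$ and the additive maps coming from the differential of $P$ must be checked carefully: a priori they produce addition maps $\mb{Z}^2\ar\mb{Z}$ whose image under $F$ uses the product of $F(\mb{Z})$. I would try to show that $1$-coreducedness forces any such addition to land in degenerate summands that vanish after normalization. Failing that, I would build an explicit natural quasi-isomorphism to a smaller cosimplicial model, for instance via the décalage or a cobar-type construction, in which only comultiplications appear. If this works, $F(K(P))$ and $G(K(P))$ are both functors of the augmented coalgebra $C$ together with the maps $[p]\colon C\ar C$. The remaining worry is that $[p]$, the $p$-th power map in the Hopf algebra, also depends on multiplication. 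Here I would use that for $[\mb{Z}\ar[p]\mb{Z}]$ in degrees $\ge 2$ one can choose a different model, namely the homotopy cofiber built with shifts, in which the map $p$ is encoded by $p$ distinct diagonal-type components. Alternatively, one can replace $P$ by the fiber of $\mb{Z}[-n]\ar\mb{Z}[-n]$ and realize multiplication by $p$ after one suspension through the cosimplicial sum, which in coreduced degrees becomes a coface.

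Granting this, the isomorphism $C_F\simeq C_G$ of augmented coalgebras gives an isomorphism of cosimplicial $\kk$-modules $F(K(P))\simeq G(K(P))$, natural in maps of such complexes. Naturality in morphisms of $D(\Ab)$ between such $P$ requires a small extra argument. Morphisms between coreduced complexes of free groups in degrees $\ge 2$ can be represented by chain maps that, after $K$, are again built from diagonals and projections up to homotopy, so the same lemma applies to morphisms and to homotopies (cosimplicial homotopies via $K(P\otimes\Delta^1)$-type cylinders). Finally, monoidality follows because $K(P\oplus Q)=K(P)\oplus K(Q)$ and the strong monoidal structures $F(A\oplus B)\simeq F(A)\otimes F(B)$ are determined by the coalgebra data on generators. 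The isomorphism therefore commutes with $\EZ$ and $\AW$, which are given by the same universal cosimplicial formulas for $F$ and $G$. Symmetry is handled the same way.
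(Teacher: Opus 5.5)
Your proposal has a genuine gap, and the lemma you single out as the main obstacle is aimed at the wrong place. In the cosimplicial Dold--Kan construction, under any structure map of $K(P)$ each target summand receives a contribution from at most one source summand, via the identity or the differential of $P$. So for a split $P$ no addition maps occur at all, in any degree, and every structure map is an $\Ab''$-map up to the scalars $p$ contributed by the differential. The condition $K(P)^0=K(P)^1=0$ plays no role in this.

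Consequently nothing in your argument distinguishes $\mb{Z}/2[-2]$, modelled by $[\mb{Z}\xrightarrow{2}\mb{Z}]$ in degrees $1,2$. There the conclusion is false: $\mb{L}^*\Bin(\mb{Z}/2[-2])$ and $\mb{L}^*\Gamma(\mb{Z}/2[-2])$ are not isomorphic. The scalars are the real problem. $F(p)$ is the $p$-th convolution power of the identity of $F(\mb{Z})$, so it uses the product. For $\Bin$ and $\Gamma$, multiplication by $2$ sends $\binom{x}{2}\mapsto\binom{2x}{2}=4\binom{x}{2}+x$ but $x^{[2]}\mapsto 4x^{[2]}$. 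Hence the levelwise coalgebra isomorphism is not even a map of cosimplicial modules $\Bin(K(P))\to\Gamma(K(P))$ for $P=[\mb{Z}\xrightarrow{2}\mb{Z}][-n]$. Your ``granting this'' step therefore cannot be granted on the models $K(P)$.

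Your naturality step is also false as stated. On $K(\mb{Z}[-n])$ the endomorphism $p$ of $\mb{Z}[-n]$ is multiplication by $p$ in each degree. Its homotopy class is detected by its action on $K(\mb{Z}[-n])^n=\mb{Z}$, and any map in $\cosimpl{\Ab''}$ acts there by $0$ or $1$. So no homotopy moves it into $\cosimpl{\Ab''}$. The same holds for the fold map $P\oplus P\to P$, which produces the product on $\mb{L}F(P)$, and for the $\Ext^1$-components of morphisms.

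The missing idea is to abandon $K(P)$ and change models:
\begin{enumerate}
\item Represent $P^\vee$ by the reduced chains of a finite simplicial Moore space $X$, so that $P\simeq\mb{Z}^{(X,\pt)}\in\cosimpl{\Ab''}$.
\item Realize every morphism of $D(\Ab)$ between such objects by a based simplicial map of Moore spaces, after subdivision.
\end{enumerate}
The second step is exactly where the coconnectivity hypothesis enters. For simply connected Moore spaces, cofiber-sequence arguments lift every element of $\Hom(A,A')$ and of $\Ext^1(A,A')$ to maps $\MooreSp(A,n)\to\MooreSp(A',n)$ and $\MooreSp(A,n)\to\MooreSp(A',n+1)$ (Proposition~\ref{prop:maps_of_moore}, Corollary~\ref{cor:moore}). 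For $n=1$ this fails, as the $\mathbb{R}P^2$ example shows. Since these models are degreewise free and $F$ preserves homotopies, $F(u)$ depends only on the derived class of $u$ (Lemma~\ref{lemma:key_cs}). Evaluating $\phi''$ on these models then gives the natural monoidal isomorphism.

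Your suggested alternatives, a cofiber ``built with shifts'' or realizing $p$ ``through the cosimplicial sum'', point vaguely at the object-level half of this. They give no mechanism for morphisms, which is the actual content of the theorem.
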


To state the other result, as an application of the technique developed in this work, 
recall that for any finitely generated abelian group $A\in \Ab^{fg}$ there is Dold-Puppe-Thom identification
of homology of the Eilenberg-MacLane space $K(A,n)$ \cite[Satz 4.16]{DP}:
\begin{equation}\label{eq:dpt}
	\DP\colon H_*(K(A,n);\mb{Z})\simeq \mb{L}_*\Sym(A[n]).
\end{equation}
The construction of $\DP$ uses an isomorphism 
of $\mb{L}\Sym(A[n])$ and the singular chains on the infinite symmetric power $\SP^{\infty}\MooreSp(A,n)$ of the Moore space $\MooreSp(A,n)$. 
It is well-known that in general there is no functor $A\leadsto \MooreSp(A,n)\in \Top$, \cite{Carlsson}.
In fact there is \emph{no} functorial isomorphism of $H_3(K(A,1);\mb{Z})$ and $\mb{L}_3 \Sym(A[1])$,
so the isomorphism \eqref{eq:dpt} is not functorial in general, see \cite[B.1]{Mikhailov} and \cite[3.1]{MikhailovSpl}.
It turns out that this phenomenon occurs only at the level of classical group homology:
\begin{thrm*}[\ref{cor:dtp}]
	The Dold-Puppe-Thom isomorphism $\DP$ is functorial for $n>1$.
\end{thrm*}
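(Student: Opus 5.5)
We want a natural isomorphism $H_*(K(A,n);\mb{Z})\simeq \mb{L}_*\Sym(A[n])$ for $A\in\Ab^{fg}$, $n\ge 2$, compatible with the multiplicative structures. The plan is to realize both sides as values of derived exponential functors on the same object and then invoke Theorem~\ref{thrm:LF_LG_cs}.

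\textbf{Step 1: reduce $A$ to a perfect complex.} A finitely generated abelian group $A$ has a two-term free resolution $L_1\to L_0$, so $A[n]$ is a perfect complex. With the paper's cohomological convention the homological shift $[n]$ should be read as cohomological degree $-n$, so that $\mb{R}\Hom(A[n],\mb{Z})$ has cohomology in degrees $n$ and $n+1$. We therefore work with the dual $P=\mb{R}\Hom(A,\mb{Z})[n]\in D^{>0}_{perf}(\Ab)$, written contravariantly, and use the fact that $A\mapsto A^\vee$ is an antiequivalence of $D_{perf}$. The condition $n\ge 2$ should be exactly what places this object in $D^{<-1}_{perf}(\Ab)^{\vee}$. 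I would check this degree bookkeeping carefully: the boundary of the allowed range should be $n=2$, consistent with the known failure at $n=1$.

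\textbf{Step 2: two exponential functors with the same coalgebra.} Take $F=\Sym$, or, to match the cosimplicial formula \eqref{eq:derfun}, its dual incarnation, the divided powers $\Gamma$ applied to the dual lattice. For $G$ take the group-ring functor $L\mapsto \mb{Z}[L]$, or dually $L\mapsto \mb{Z}[L^\vee]^\vee$, i.e.\ functions on the lattice. Then:
\begin{itemize}
\item $\mb{L}G$ computes chains on the simplicial abelian group $K(L_\bullet)$, which is a model of $K(A,n)$. Hence $\mb{L}G(P)$ is functorially $C_*(K(A,n);\mb{Z})$, with the Pontryagin product as its algebra structure. This uses that the bar/Dold--Kan construction of $K(A,n)$ from a free resolution is functorial up to canonical quasi-isomorphism.
\item $\mb{L}F(P)$ is, by definition, $\mb{L}\Sym(A[n])$.
\end{itemize}
The key input is an isomorphism of augmented coalgebras $\Sym(\mb{Z})=\mb{Z}[x]$ (with $x$ primitive) $\cong$ the relevant group-ring-type coalgebra on $\mb{Z}$. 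The group ring $\mb{Z}[t^{\pm1}]$ has grouplike $t$, so it is not isomorphic as a coalgebra to $\mb{Z}[x]$. The dual side is more promising: on the divided power Hopf algebra $\Gamma(\mb{Z})$ the coproduct is $\gamma_n\mapsto\sum\gamma_i\otimes\gamma_{n-i}$, and numerical polynomials $\mathrm{Int}(\mb{Z})$ carry the binomial coalgebra structure with basis $\binom{x}{n}$, $\Delta\binom{x}{n}=\sum\binom{x}{i}\otimes\binom{x}{n-i}$. These match via $\gamma_n\mapsto\binom{x}{n}$ as augmented coalgebras, even though the algebra structures differ. So I would set things up cosimplicially with $F=\Gamma$-type and $G=\mathrm{Int}$-type (cochains on $K(A^\vee,\cdot)$), and recover homology by dualizing; finite generation makes the duality harmless.

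\textbf{Step 3: conclude.} Apply Theorem~\ref{thrm:LF_LG_cs} to obtain a natural isomorphism of strong symmetric monoidal functors on $D^{<-1}_{perf}(\Ab)^\vee$. Evaluating at $P$ and precomposing with the functor $A\mapsto P$ on $\Ab^{fg}$ gives a natural ring isomorphism. The last check is that on objects it agrees with the classical $\DP$, at least up to a natural automorphism, for instance by comparing on free $A$ and using that both constructions are built from a choice of resolution.

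\textbf{Main obstacle.} I expect the main obstacle to be precisely identifying $\mb{L}G(P)$ with (co)chains of $K(A,n)$ naturally, including matching the cosimplicial/dual conventions, and pinning down that $n\ge2$ is what the theorem's degree hypothesis requires.
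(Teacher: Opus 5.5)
There is a genuine gap, and it is exactly where the topology has to enter. Theorem~\ref{thrm:LF_LG_cs}, applied to $\Gamma''\simeq\Bin''$, only gives the purely algebraic isomorphism $\mb{L}\Gamma\simeq\mb{L}\Bin$ on $D^{<-1}_{perf}(\Ab)^{\vee}$. Nothing about $K(A,n)$ comes out of it.

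To reach the Eilenberg--MacLane space you then assert that $\mb{L}\Bin(P^\vee)$ is naturally and monoidally $C^*(\EM(P))$. You treat this as convention-matching, by analogy with the group ring, but the analogy breaks:
\begin{itemize}
\item $\mb{L}\mb{Z}\langle-\rangle\simeq C_*(\EM(-))$ is tautological (Proposition~\ref{prop:chains_em}), since $\mb{Z}\langle K(P)\rangle$ \emph{is} the chain complex of the simplicial set $K(P)$.
\item Levelwise cochains of $K(P)$ are \emph{all} functions $K(P)_m\to\mb{Z}$, i.e.\ your $\mb{Z}[L^\vee]^\vee$. That functor is not exponential: products of infinitely many copies of $\mb{Z}$ do not commute with $\otimes$ (the indicator of the diagonal in $\mb{Z}\times\mb{Z}$ is not a finite sum of products). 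So Theorem~\ref{thrm:LF_LG_cs} does not apply to it.
\item $\Bin(L)$ is only the subring of integer-valued \emph{polynomial} functions. That the inclusion $\Bin(K(P^\vee))\hookrightarrow\mb{Z}^{K(P)}$ is a quasi-isomorphism is a deep theorem (Ekedahl, To\"en, Horel), which the paper only sketches by citation in Section~\ref{sect:cochains_em}.
\end{itemize}
The paper obtains $\mb{L}\Bin\simeq\mb{L}\Gamma$ from that theorem together with the dual of the Dold--Puppe--Thom isomorphism, i.e.\ essentially your argument run backwards. As written, your proof has no source for the identification with $K(A,n)$. Importing the binomial-cochains theorem would make it work, together with the easy monoidal duality $\mb{L}\Gamma(P^\vee)\simeq\mb{L}\Sym(P)^\vee$, which you gloss as ``by definition''. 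But that input is much heavier than the statement requires.

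The missing idea is that you do not need a coalgebra isomorphism at all. In the homological regime the paper uses Theorem~\ref{thrm:LF_LG} on $\Ab'$, where the relevant structure is the augmented algebra. That theorem accepts a monoidal \emph{morphism} $\phi'\colon\Sym'\to\mb{Z}\langle-\rangle'$. Here $\mb{Z}[x]\to\mb{Z}[t^{\pm1}]$, $x\mapsto t-1$, is a map of augmented algebras, so your remark that $t$ is grouplike is beside the point.

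This morphism gives a monoidal transformation $\mb{L}\Sym\to\mb{L}\mb{Z}\langle-\rangle$ on $D^{<-1}_{perf}(\Ab)$. Evaluated on Moore-space models $\mb{Z}\langle X/\pt\rangle$, Lemma~\ref{lemma:sym_sp} identifies it with $\mb{Z}\langle\SP^\infty X\rangle\to\mb{Z}\langle|\mb{Z}\langle X/\pt\rangle|\rangle$. That map is a quasi-isomorphism by the Dold--Thom theorem (\ref{thrm:Dold_Thom}). Combined with the tautological Proposition~\ref{prop:chains_em}, this gives $C_*(\EM(-))\simeq\mb{L}\Sym$ monoidally. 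Restricting to $A[n]$ with $n\ge 2$ and tensoring with $R$ yields Corollary~\ref{cor:dtp}. So the group ring you discarded is exactly the right $G$, and Dold--Thom, not a coalgebra isomorphism, is what makes the comparison invertible.
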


Finally, in \ref{sect:cochains_em} we will formulate the dual statement concerning the cohomology of Eilenberg-MacLane spaces $H^*(K(A,n);\mb{Z})$.
We will see that it is closely related to a surprising isomorphism of derived binomial and divided powers algebras \ref{sect:bin}:
$\mb{L}^*\Bin(\mc{RH}om(A[n],\mb{Z}))$ and $\mb{L}^*\Gamma(\mc{RH}om(A[n],\mb{Z}))$, respectively.
The last section \ref{sect:apps} includes two remarks about
the cohomology of iterated classifying stacks and the cohomology of symmetric products.
\bigskip

\textbf{Acknowledgements}: 
This work grew out of an attempt to understand an identification of the cohomology
$H^*(K(\mb{Z},3);\mb{Z})$ of the Eilenber-MacLane space given in
\cite[A]{KP}. 
I am grateful to Dmitry Kubrak for fruitful 
discussions and to Antoine To\'uze for helpful comments. 
Finally, I would like to thank Dmitry Kaledin for drawing my attention to Pirashvili's work, 
as well as Pierre Godfard, Alexey Gorinov, Bhanu Kiran, Grigory Kondyrev, Geoffroy Horel, Grigory~Papaynov, Georgii~Shuklin and Grigory Solomadin for relevant discussions.

\section{Preliminaries}
\subsection{Categories $\Ab'$ and $\Ab''$}
Consider the category $\Fin_*$ of finite based sets.
We denote its objects by $S_+/\pt=S\sqcup \{\pt\}$, 
where $\pt$ is the base point of $S_+$.
Define the functor $\mb{Z}\langle -\rangle\colon \Fin_*\ar \Ab$ 
by 
$$\mb{Z}\langle S_+/\pt\rangle:=\mb{Z}\langle S_+\rangle/\mb{Z}\langle \pt\rangle\simeq \mb{Z}^{\oplus S}.$$
\begin{dfn}
	The category $\Ab'$ is the image of $\mb{Z}\langle -\rangle\colon \Fin_*\lar \Ab^{\mathrm{free,fg}}$.
	Similarly let $\Ab''\subset \Ab^{\mathrm{free,fg}}$ be the image of $\Ab'$ under the contravariant 
	functor $V\ar V^\vee$.
\end{dfn}
Trivially $\Ab'$ is equivalent to $\Fin_*$, while $\Ab''$ is equivalent to $\Fin^{\op}_*$.
Denote by $I'\colon \Ab'\ar \Ab$ and by $I''\colon \Ab''\ar \Ab$ the corresponding faithful embeddings.

\begin{rmrk}

1. The objects of $\Ab'$ are free modules $\mb{Z}^{\oplus S}$ provided with a chosen unordered basis.
	A morphism $f_*\in \Hom_{\Ab'}(\mb{Z}^{S_1},\mb{Z}^{S_2})$ corresponds to a map 
	$f\colon S_1 \ar S_2\sqcup \{\pt\}$, such that $f_* e_s=e_{f(s)},s\in S_1$ if 
	$f(s)\neq \pt$ and zero otherwise.

\bigskip

2. Similarly a morphism $f^*\in \Hom_{\Ab''}(\mb{Z}^{S_1},\mb{Z}^{S_2})$ corresponds to a map
	$S_2\ar S_1\sqcup \{\pt\}$, such that $f^* e_{s}=\sum\limits_{s'\in f^{-1}(s)}e_{s'}$.

\end{rmrk}
Both categories $\Ab',\Ab''$ share morphisms $0\ar \mb{Z}$ and $\mb{Z}\ar 0$.
We see that $\mb{Z}^{\oplus 2}\ar[+] \mb{Z}$ belongs to $\Ab'$, while
$\mb{Z}\ar[\Delta] \mb{Z}^{\oplus 2}$ belongs to $\Ab''$.

We will consider $(\Ab',\oplus)$ and $(\Ab'',\oplus)$ as monoidal categories under the direct sum in $\Ab$.
More precisely, the monoidal structures correspond, respectively, to the coproduct in $\Fin_*$ and the product in $\Fin^{\op}_*$: 
${S_1}_+\vee {S_2}_+=(S_1\sqcup S_2)_+$.
Tautologically the monoidal structure $(\Ab',\oplus)$ is cocartesian and $(\Ab'',\oplus)$ is cartesian,
the functors $I'$ and $I''$ are strong symmetric monoidal.
We will be interested in monoidal functors of the form $(\Ab,\oplus)\lar (\Mod(\kk),\otimes)$.

Using the fact that the category of lattices $\Ab^{free,fg}$ is monoidal equivalent to the category with objects $[n],n\ge 0$ and the morphisms
$\Hom([n],[m])=\Mat_{m\times n}(\mb{Z})$, it is easy to observe the following identification (see \cite[5.2]{Pirashvili}, \cite[5.3]{Touze}).
\begin{prop}\label{prop:groups_as_functors}
	In each of the following cases, there is an equivalence of the category of strong symmetric monoidal functors
	\begin{enumerate}
		\item 
			$\{F\colon (\Ab^{\mathrm{free,fg}},\oplus)\lar (\Mod(\kk),\otimes)\}$ and $
		\{\text{Co/commutative Hopf algebras over }\kk\}$,
		\item 
			$\{F\colon (\Ab',\oplus)\lar (\Mod(\kk),\otimes)\}$ and $
			\{\text{Commutative algebras with augmentation over }\kk\}$,
		\item 
			$\{F\colon (\Ab'',\oplus)\lar (\Mod(\kk),\otimes)\}$ and $
			\{\text{Cocommutative coalgebras with augmentation over }\kk\}$.

	\end{enumerate}
	Each equivalence is given by the evaluation $F\leadsto F(\mb{Z})$.
\end{prop}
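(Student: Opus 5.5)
The plan is to reduce each case to the universal property of a presentation of the source category by generators and relations, as a symmetric monoidal category. In each case the source is equivalent to a skeletal symmetric monoidal category with objects $[n]=\mb{Z}^{\oplus n}$, $n\ge 0$, and $\oplus$ given on objects by $[n]\oplus[m]=[n+m]$. A strong symmetric monoidal functor $F$ is then determined, up to monoidal isomorphism, by $H:=F(\mb{Z})$ together with the images of the generating morphisms out of $[0]$, $[1]$ and $[2]$. The identities these images satisfy are forced by the relations among the generators.

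The three presentations are as follows.
\begin{enumerate}
\item $\Ab^{free,fg}$, with $\Hom([n],[m])=\Mat_{m\times n}(\mb{Z})$, is the Lawvere theory of abelian groups, made biproduct-compatible. It is generated under $\oplus$ and composition by:
\begin{itemize}
\item $+\colon[2]\to[1]$, $0\colon[0]\to[1]$, $\Delta\colon[1]\to[2]$, $[1]\to[0]$, $-1\colon[1]\to[1]$;
\item the symmetry isomorphisms.
\end{itemize}
The relations are those of a commutative and cocommutative bimonoid with antipode; this is the standard description of $\Mat(\mb{Z})$ as the PROP of commutative Hopf algebras (cf.\ \cite[5.2]{Pirashvili}).
\item $\Ab'\simeq\Fin_*$ has $\oplus$ as its coproduct, and every pointed map factors through $+$, $0\colon [0]\to[1]$, $[1]\to[0]$ and symmetries. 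Hence $\Ab'$ is the free symmetric monoidal category on a commutative monoid object $[1]$ together with a map $[1]\to[0]$ compatible with the monoid structure, i.e.\ an augmentation.
\item $\Ab''\simeq\Fin_*^{\op}$ is dual to $\Ab'$, so the same presentation with all arrows reversed gives augmented cocommutative comonoids.
\end{enumerate}

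Given these presentations, the proof runs in three steps.
\begin{enumerate}
\item \emph{The functor lands in the stated category.} Because $F$ is strong monoidal, $F([n])\simeq H^{\otimes n}$, and the generators map to structure maps on $H$. For instance, in case 1 the bialgebra compatibility between $F(+)$ and $F(\Delta)$ comes from the matrix identity $\Delta\circ + = (+\oplus +)\circ(1\oplus\sigma\oplus 1)\circ(\Delta\oplus\Delta)$ together with the symmetric monoidal constraints. Thus $F\leadsto F(\mb{Z})$ takes values in the stated category, and monoidal natural transformations go to structure-preserving maps.
\item \emph{Essential surjectivity.} Given such an algebraic structure on $H$, define $F([n])=H^{\otimes n}$ and send generators to the structure maps. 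This is well defined by the presentation, and it is symmetric strong monoidal by construction.
\item \emph{Full faithfulness.} A monoidal transformation $\eta$ is determined by $\eta_{[1]}$, since $\eta_{[n]}=\eta_{[1]}^{\otimes n}$. Naturality need only be checked on generators, which is exactly the condition that $\eta_{[1]}$ is a morphism of the algebraic structure.
\end{enumerate}

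The main obstacle is completeness of the presentation in case 1. One has to show that every integer matrix factors through the generators, and that any two factorizations are equal modulo the Hopf relations. For the first part, any matrix $M$ is written as $\Delta$'s (copying inputs), then $\pm1$ and $0$ scalings, then $+$'s (summing into outputs). For the second part, I would show that any composite reduces via the bialgebra relation to this normal form, "all copies, then all scalings, then all sums". The normal form is determined by the entries $M_{ij}$, expressed as $|M_{ij}|$ copies with sign. The step that needs care is that $n\cdot(-1)$ composed with $n$ gives zero. The antipode axiom $+\circ(1\oplus -1)\circ\Delta = 0\circ(\text{counit})$ handles this.

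Cases 2 and 3 are easier because $\Fin_*$ has an elementary normal form: a pointed map is a fold on fibres plus a collapse to the basepoint. Alternatively, one can cite \cite[5.3]{Touze} for case 1 and verify only the two pointed-set cases directly.
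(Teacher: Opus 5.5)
Your proposal is correct and follows the same route as the paper. The paper passes to the skeleton with $\Hom([n],[m])=\Mat_{m\times n}(\mb{Z})$ (resp.\ $\Fin_*$, $\Fin_*^{\op}$) and cites \cite[5.2]{Pirashvili} and \cite[5.3]{Touze} for identifying these as the PROPs of bicommutative Hopf algebras, augmented commutative algebras, and coaugmented cocommutative coalgebras; your generators-and-relations sketch, including the normal-form argument via the antipode axiom, fills in what the paper delegates to those references.
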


The algebro-geometrical way to describe the inverse equivalence in the case of Hopf algebras is as follows. 
Given a commutative affine group scheme $G$ over $\kk$, the inverse monoidal functor is given
by 
$\mc{O}_{G}^{\otimes -}\colon (\Ab^{free,fg},\oplus)\lar (\Mod(\kk),\otimes)$,
defined by 
\begin{equation}\label{eq:grp_sch}
	\mc{O}_G^{\otimes V}:=\mc{O}(G\otimes_{\ul{\mb{Z}}}V^\vee),
\end{equation}where $G$ and
the discrete group $V^\vee$ are
considered as $\ul{\mb{Z}}$-modules in schemes over $\kk$.
For example $\mc{O}_G^{\otimes \mb{Z}^n}\simeq \mc{O}(G^{\times n})$, so that $\mb{Z}^{2}\ar[+]\mb{Z}$
induces the multiplication in $\mc{O}_G$, while $\mb{Z}\ar[\Delta]\mb{Z}^2$ induces the comultiplication.

Let us mention that the application of this observation in the simplicial setting, provides an invariant which depends on the algebra  $F(\mb{Z})$
and a homotopy type $X$. This idea was developed \cite{PirashviliHHH} and goes back to Loday. For example, the case $X=S^1$ corresponds to Hochshild homology of 
the commutative algebra $F(\mb{Z})$ in the bimodule $\kk$.

\subsection{Derived functors}
Assume $F\colon \Ab\ar \Mod(\kk)$ is a functor, non necessary additive.
According to \cite[4.2.2.2]{Illusie} one can define its left derived functor $\mb{L}F\colon D^{-}(\Ab)\ar D(\Mod(\kk))$,
using cosimplicial-simplicial (mixed) version of Dold-Kan correspondence.
In this work it is enough to treat the simplical/connective and cosimplicial/coconnective ``regimes'' separately.
Below we will describe the functor $\mb{L}F\colon D^{>0}_{perf}(\Ab)\ar D(\Mod(\kk))$.
The simplical part $\mb{L}F\colon D^{\le 0}(\Ab)\ar D(\Mod(\kk))$ is similar (see \cite{Dold}).

Recall that there is an equivalence of categories
$$N\colon \cosimpl{\Ab}\substack{\lar\\\lal}\Ch^{\ge 0}(\Ab)\colon K,$$ given
by Moore's normalization functor $N(-)$ and the Kan functor $K(-)$,
provided with isomorphisms $K\circ N\simeq \id$ and 
$N\circ K\simeq \id$, see \cite{Weibel}.
For both categories one can define the notion of weak equivalence and of homotopy between morphisms, 
the functors $N,K$ preserve the weak equivalences and homotopies.
The localization of $\cosimpl{\Ab}$ by weak equivalences will be denoted by $\Ho(\cosimpl{\Ab})$
and that of cochain complexes by $D^{\ge 0}(\Ab)$.

Consider a functor $F\colon \Ab^{free,fg}\lar \Mod^{flat}(\kk)$.
A simple, but crucial observation is that
the induced functor $F\colon \cosimpl{\Ab^{free,fg}}\ar \cosimpl{\Mod^{flat}(\kk)}$ 
preserves homotopies \cite{Dold}.
Note that any $P\in D^{>0}_{perf}(\Ab)$ is quasi-isomorphic to a complex of free abelian groups of finite rank in 
$\Ch^{\ge 0}(\Ab^{free,fg})$.
By the above, the Kan functor $K$ induces an equivalence of $D^{>0}_{perf}(\Ab)$ and 
a full subcategory $\Ho^{>0,ft}(\cosimpl{\Ab})\subset \Ho(\cosimpl{\Ab})$.
Since any quasi-isomorphism between such complexes is a homotopy equivalence,
it follows
that the object $N(F(K(P)))\in D(\Mod(\kk))$ is well-defined up to unique isomorphism.
We will work with the following operational definition.
\begin{dfn}
	For $F$ as above, define
	its \emph{left derived functor} $\mb{L}F\colon D^{>0}_{perf}(\Ab)\ar D(\Mod(\kk))$
	by the formula $\mb{L}F(P):=N(F(K(P)))$, where $P\in \Ch^{\ge 0}(\Ab^{free,fg})$.
\end{dfn}
Thus $\mb{L}F$ is equal to the composition
\begin{equation}\label{eq:quillen_der_cs}
\begin{tikzcd}
	D^{>0}_{perf}(\Ab)\ar[r,"K"]& \Ho(\cosimpl{\Ab})\ar[r,"\mb{L}F_{cs}"]& \Ho(\cosimpl{\Mod(\kk)})\ar[r,"N"]& D(\Mod(\kk))
\end{tikzcd}
\end{equation}
where $\mb{L}F_{cs}\colon \Ho(\cosimpl{\Ab})\ar \Ho(\cosimpl{\Mod(\kk)})$ is the left derived functor or the right Kan extension of $F\colon \cosimpl{\Ab}\ar \Ho(\cosimpl{\Mod(\kk)})$ with 
respect to the model structure on $\cosimpl{\Ab}$
induced by the projective model structure on $\Ch^{\ge 0}(\Ab)$ via the Dold-Kan equivalence $K\colon \Ch^{\ge 0}(\Ab)\ar[\sim] \cosimpl{\Ab}$.
Concretely $\mb{L}F_{cs}$ is evaluated by applying $F$ to a cofibrant replacement.


Assume further that $F\colon (\Ab^{\mathrm{free,fg}},\oplus)\ar (\Mod(\kk),\otimes)$ is exponential, i.e. $F$ is provided with a 
strong symmetric monoidal structure.
The normalization functor $N$ is lax monoidal with respect to the Alexander-Whitney map
$$\AW\colon N(A)\otimes N(B)\ar N(A\otimes B),$$
and is oplax monoidal with respect to Eilenberg-Zilber map
$$\EZ\colon N(A\otimes B)\ar N(A)\otimes N(B),$$
for any $A,B\in \cosimpl{\Ab}$.
Recall that $\EZ$ is symmetric on the nose, 
while $\AW$ is only up to higher coherent homotopies.
In particular, since $K$ preserves the direct sums, the complex $N(F(K(P)))$ is provided with cocommutative comultiplication via $\EZ$,
while the multiplication given by $\AW$ is associative and $N(F(K(P)))$ is 
$E_\infty$-algebra.
Thus $\mb{L}F(P)\in D(\Mod(\kk))$ is a Hopf object, e.g.
the cohomology $\mb{L}^*F(P)$ is a graded Hopf algebra.

We conclude by noting that the connective part 
$\mb{L}F\colon D^{\le 0}_{perf}(\Ab)\ar D(\Mod(\kk))$ is treated similarly.
In particular we have the formula 
$\mb{L}F(P)=N(F(K(P)))$,
where $P\in \Ch^{\le 0}(\Ab^{free,fg})$.
The functor $\mb{L}F$ is equal to the composition
\begin{equation}\label{eq:quillen_der_s}
\begin{tikzcd}
	D^{\le 0}_{perf}(\Ab)\ar[r,"K"]& \Ho(\simpl{\Ab})\ar[r,"\mb{L}F_s"]& \Ho(\simpl{\Mod(\kk)})\ar[r,"N"]& D(\Mod(\kk))
\end{tikzcd}
\end{equation}
If $F$ is exponential, then the bialgebra structure on $\mb{L}F(P)\in D(\Mod(\kk))$ is induced by the maps $P\oplus P\ar[+] P$ and $P\ar[\Delta] P\oplus P$.

\section{Homotopy refinement of the Kan functor}
Given based simplicial set $(X,\pt)\in \simpl{\Fin}_*$ defines a free abelian group
$\mb{Z}\langle X\rangle\in \simpl{\Ab^{free,fg}}$. The
morphisms $\pt\ar X\ar \pt$ 
define a splitting 
$\mb{Z}\langle X\rangle\simeq \mb{Z}\langle X/\pt\rangle\oplus \mb{Z}\langle\pt \rangle$,
where the abelian group $\mb{Z}\langle X/\pt\rangle_n$ is spanned by 
an unordered basis given by elements of the form $x':=x-\pt$ in $\mb{Z}\langle X\rangle$, where
$x\in X_n\setminus \pt_n$.
Clearly $\mb{Z}\langle X/pt\rangle\in \simpl{\Ab'}$,
thus we obtain a functor $\mb{Z}\langle-/\pt \rangle\colon \simpl{\Fin}_*\ar \simpl{\Ab'}$.
\subsection{Moore spaces}
\begin{dfn}\label{dfn:moore_sp}
        Given abelian group $A\in \Ab^{\mathrm{fg}}$ and $n>1$,
	the Moore space $\MooreSp(A,n)\in \Top_*$
                is a 1-connected space such that
        $\tilde{H}_i(\MooreSp(A,n);\mathbb{Z})=0,i\neq n$, provided
        with an isomorphism $H_n(\MooreSp(A,n);\mathbb{Z})\simeq A$.
\end{dfn}
We can represent
$\MooreSp(A,n)$ as a CW-complex with cells in dimensions $n,n+1$ as follows.
Take a two-term resolution $[\mb{Z}^R\ar[f] \mb{Z}^G]\ar[\sim] A$ and set 
\begin{equation}\label{moore_cone}
M(A,n):=\cof(\vee_R S^n\ar[F] \vee_G S^n)\simeq \vee_R D^{n+1}\sqcup_{F} \vee_G S^n,
\end{equation}
where the map $F$ is such that
$H_n(F;\mb{Z})=f$.
In fact $\MooreSp(A,n)$ can be modeled by 
a finite simplicial complex.
Moreover, if $F\colon (\MooreSp(A,n),\pt)\to (\MooreSp(A',m),\pt)$ is a map of 
the topological spaces corresponding to simplicial complexes, then
the simplicial approximation theorem asserts 
that $F$ is homotopic to a simplicial map 
$F^\varepsilon:(\MooreSp(A,n),\pt)\to (\MooreSp(A',m),\pt)$
after passing
to sufficiently fine barycentric subdivision.
Thus any topological statement about maps
between Moore space has a simplicial couterpart. Abusing notation we will
denote any such finite simplicial model of the Moore space by 
$\MooreSp(A,n)\in \simpl{\Fin}_*$.

Note that the homotopy type of $\MooreSp(A,n)$ is unique for $n>1$,
but there is \emph{no} functor $\MooreSp(A,n)\colon \Ab^{fg}\ar \Top$ \cite{Carlsson}.
The situation is better if one passes to the homotopy category $\Ho(\Top)$ and invert $2$, see \cite[cor 6.6]{Neisendorfer},\cite{Neeman} and \ref{rmrk:moore_functor}.
However, there are sufficiently many maps between Moore spaces.
\begin{prop}\label{prop:maps_of_moore}
        For $n>1$ and $A,A'\in \Ab^{\mathrm{fg}}$ the natural maps
        \begin{equation}\label{eq:moore_hom}
                \Hom_{\Ho(\Top_*)}(\MooreSp(A,n),\MooreSp(A',n))
                \overset{C^{\mathrm{sing}}_*(-)}{\lar} 
                \Hom_{D(\Ab)}(A[n],A'[n])=\Hom_{\Ab}(A,A')
        \end{equation}
        \begin{equation}\label{eq:moore_ext}
                \Hom_{\Ho(\Top_*)}(\MooreSp(A,n),\MooreSp(A',n+1))
		\overset{C^{\mathrm{sing}}_*(-)}{\lar}
		\Hom_{D(\Ab)}(A[n],A[n+1])=\Ext^1_{\Ab}(A,A')
        \end{equation}
        are surjective.
\end{prop}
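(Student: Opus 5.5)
We will prove both surjectivity statements by realizing algebraic maps as cellular maps between the explicit cone models \eqref{moore_cone}, one cell dimension at a time. Fix free resolutions $[\mb{Z}^R\ar[f]\mb{Z}^G]\ar[\sim]A$ and $[\mb{Z}^{R'}\ar[f']\mb{Z}^{G'}]\ar[\sim]A'$, and let $\MooreSp(A,n)=\cof(\vee_R S^n\ar[F]\vee_G S^n)$, and similarly for $A'$. Since $n>1$, all the wedges of spheres involved are simply connected. Hurewicz then gives $\pi_n(\vee_G S^n)=\mb{Z}^G$, and more generally $[\vee_I S^k,\vee_J S^k]_*=\Mat_{J\times I}(\mb{Z})$ for $k\ge 2$. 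Consequently any chain map between the cellular chain complexes of such wedges is realized by a map of spaces. Under this dictionary the cellular chain complex of $\MooreSp(A,n)$ is the resolution of $A$ shifted by $n$.

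For \eqref{eq:moore_hom}, take $\phi\colon A\ar A'$ and lift it to a chain map of resolutions: $\phi_0\colon\mb{Z}^G\ar\mb{Z}^{G'}$ and $\phi_1\colon\mb{Z}^R\ar\mb{Z}^{R'}$ with $f'\phi_1=\phi_0 f$. Realize $\phi_0$ by a map $\Phi_0\colon\vee_G S^n\ar\vee_{G'}S^n$ and $\phi_1$ by $\Phi_1\colon\vee_R S^n\ar\vee_{R'}S^n$. The two composites $\Phi_0\circ F$ and $F'\circ\Phi_1$ induce the same map on $H_n$. Because $[\vee_R S^n,\vee_{G'}S^n]_*=\Hom(\mb{Z}^R,\mb{Z}^{G'})$, they are homotopic, and we choose a homotopy $h$. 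The square $(\Phi_1,\Phi_0,h)$ then induces a map on the (homotopy) cofibers, $\MooreSp(A,n)\ar\MooreSp(A',n)$, and we get a map of cofiber sequences. On cellular chains this map is the chain map $(\phi_1,\phi_0)$ shifted by $n$, so on $H_n$ it induces $\phi$. Because $C^{\mathrm{sing}}_*$ of a Moore space is formal, namely quasi-isomorphic to $A[n]$ via the cellular chains, the map in $D(\Ab)$ is the one determined by $\phi$, and surjectivity follows.

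For \eqref{eq:moore_ext}, represent a class $e\in\Ext^1(A,A')$ by a homomorphism $\epsilon\colon\mb{Z}^R\ar A'$ that does not extend along $f$. This is a chain map from the resolution of $A$ to the resolution of $A'[1]$. Lift $\epsilon$ to $\tilde\epsilon\colon\mb{Z}^R\ar\mb{Z}^{G'}$. The resulting chain map sends the degree-$(n+1)$ cells of $\MooreSp(A,n)$ to the degree-$(n+1)$ cells $\vee_{G'}S^{n+1}$ of $\MooreSp(A',n+1)$, and sends the $n$-cells to zero. We construct it geometrically as the composite
\[\MooreSp(A,n)\ar \MooreSp(A,n)/\vee_G S^n\simeq\vee_R S^{n+1}\ar[\Sigma\tilde E]\vee_{G'}S^{n+1}\ar\MooreSp(A',n+1),\]
where $\tilde E$ realizes $\tilde\epsilon$. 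Equivalently, this is the connecting map of the cofiber sequence followed by the inclusion of the bottom cells. On cellular chains it is exactly the chain map $\tilde\epsilon$ in the appropriate degree. Since cellular chains compute $C^{\mathrm{sing}}_*$ naturally for cellular maps, its image in $\Hom_{D(\Ab)}(A[n],A'[n+1])$ is the class of $\epsilon$, namely $e$.

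\textbf{Main obstacle.} The step needing the most care is justifying that the class in $D(\Ab)$ of $C^{\mathrm{sing}}_*$ of these maps equals the class of the cellular chain map. Cellular chains are functorial only for cellular maps, and the identification $C_*(\MooreSp(A,n))\simeq A[n]$ must be compatible on both sides. We would handle this by making all maps cellular (cellular approximation, or the simplicial models after subdivision). We would then use the natural zigzag of quasi-isomorphisms between singular and cellular chains, together with the fact that a map of two-term free complexes is determined up to homotopy by its class in $\Hom_{D(\Ab)}$. A further check in part two is that the chosen composite really induces the chain map $\tilde\epsilon$ and not its negative; a sign error there is harmless, since $-e$ is also realized.
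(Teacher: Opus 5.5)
Your proof is correct. Part 1 is the paper's argument: lift $u$ to a map of resolutions, realize both components by maps of wedges of spheres (possible since $n>1$), note that the left square commutes up to homotopy, and pass to cofibers. No chain-level care is needed there. Both complexes have homology only in degree $n$, so the class in $\Hom_{D(\Ab)}(A[n],A'[n])=\Hom_{\Ab}(A,A')$ is read off from $H_n$ alone. Your ``main obstacle'' therefore only matters for the second part.

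For part 2 your route differs from the paper's.

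\textbf{The paper's route.} It applies $[-,Y]$ to the Puppe sequence $\vee_R S^n\to\vee_G S^n\to \MooreSp(A,n)\to\vee_R S^{n+1}\to\vee_G S^{n+1}$. This gives a short exact sequence $0\to\Ext^1(A,\pi_{n+1}Y)\to[\MooreSp(A,n),Y]\to\Hom(A,\pi_nY)\to 0$. Specializing to $Y=\MooreSp(A',n+1)$ yields a bijection $[\MooreSp(A,n),\MooreSp(A',n+1)]\simeq\Ext^1(A,A')$. This buys more than surjectivity: injectivity as well, and a description of $[\MooreSp(A,n),Y]$ for any $1$-connected $Y$. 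However, the identification is abstract. That this bijection agrees with the map induced by $C^{\mathrm{sing}}_*$ is asserted, not checked.

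\textbf{Your route.} You write down the preimage explicitly: the connecting map $\MooreSp(A,n)\to\vee_R S^{n+1}$ followed by a map of wedges into the bottom cells. This is exactly an element in the image of $[\vee_R S^{n+1},Y]$ in the paper's sequence. You then compute its cellular chain map, using naturality of the cellular/singular comparison for cellular maps. This checks precisely the compatibility the statement needs, since a map $A[n]\to A'[n+1]$ is invisible on homology. The cost is that you get only surjectivity, but that is all that is claimed and all that Corollary~\ref{cor:moore} uses.

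One wording slip: for $e=0$ your $\epsilon$ does extend along $f$; just take the constant map.
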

\begin{proof}
	Let $\MooreSp(A,n)$ be the cellular space defined by \eqref{moore_cone}.
	Let 
	$$C^A_*:=C^{\mathrm{cw}}_*(M(A,n))\simeq [\mb{Z}^R\ar \mb{Z}^G][n],$$ and similarly for $A'$.

1. Given $u\in \Hom_{\Ab}(A,A')$, it lifts to a chain morphism of complexes
	$\tilde{u}\colon C^A_*\ar C^{A'}_*$.
        For $n>1$ there is a diagram of spaces:
        \begin{displaymath}
        \begin{tikzcd}
        \bigvee_R S^n\arrow[r]\arrow[d,"U_n"]& 
		\bigvee_G S^n\arrow[r]\arrow[d,"U_{n+1}"]& 
                        \MooreSp(A,n)\arrow[d,dashed,"U"]\\
        \bigvee_{R'} S^n\arrow[r]&
                \bigvee_{G'} S^n\arrow[r]&
                        \MooreSp(A',n)
        \end{tikzcd}
        \end{displaymath}
	where $U_k,k=n,n+1$ are maps determined, up to homotopy, by the condition $\tilde{u}_k=H_n(U_k)$.
        The left square is homotopy commutative,
        thus, by basic properties of cofiber sequences,
	the dashed arrow $U$ exists and we have $H_n(U)=u$.
        This proves surjectivity in \eqref{eq:moore_hom}.
	
2. 
Denote by $[-,-]$ the set of based maps up to based homotopy.
Consider 1-connected topological space $Y$.
Applying $\Maps_*(-,Y)$ to the cofiber sequence 
	$$\vee_R S^n\ar \vee_G S^n\ar \MooreSp(A,n)\ar \vee_R \Sigma S^n\ar \vee_G \Sigma S^{n}$$ gives an exact sequence
	of abelian groups:
	$$[\vee_G S^{n+1},Y]\ar[] [\vee_R S^{n+1},Y]\ar[] [\MooreSp(A,n),Y]\ar[] [\vee_G S^{n},Y]\ar[] [\vee_R S^{n},Y],$$
which is equivalent to the short exact sequence:
	$$0\ar \Ext^1(A,\pi_{n+1}Y)\ar[] [\MooreSp(A,n),Y]\ar \Hom(A,\pi_n Y)\ar 0.$$
For $Y=\MooreSp(A',n+1)$ we obtain a natural isomorphism $\Ext^1(A,A')\simeq [\MooreSp(A,n),\MooreSp(A',n+1)]$.
\end{proof}
\begin{rmrk}\label{rmrk:moore_functor}
	Notice that the map $U\colon \MooreSp(A,n)\ar \MooreSp(A',n)$ in part 1 of the proof is constructed by means of the homotopy making the left square strictly commutative. Thus 
	the indeterminancy in the choice of $U$ is controlled by the group of homotopy clases 
	$[\vee_R S^{n+1},\vee_{G'}S^{n}]$. In particular, \eqref{eq:moore_hom} becomes an isomorphism
	after inverting $2$ for $n>2$. See also \cite[1.3.10]{Baues}.
\end{rmrk}
\begin{rmrk}
        For $n=1$ the surjectivity of \eqref{eq:moore_hom} fails.
        For a counterexample consider non-simply connected Moore space
        $\mathbb{R}P^2$ for the group $A=\mathbb{Z}/2$.
        Suppose 
        $\Delta\colon \mathbb{R}P^2\to \mathbb{R}P^2\vee \mathbb{R}P^2$
        is a map such that
	$H_1(\Delta)\in \Hom_{\Ab}(A,A\oplus A)$ is the diagonal.
        Denote by $\pi_i\colon \mathbb{R}P^2\vee \mathbb{R}P^2\to \mathbb{R}P^2$ for $i=1,2$,
        the pair of collapsing maps.
        Let $\gamma\in H^1(\mathbb{R}P^2,\mathbb{Z}/2)$ be the generator.
        Then $\pi_i\circ \Delta$ induces the identity on
        $H_1(\mathbb{R}P^2,\mathbb{Z}/2)$ and
        hence on its dual $H^1(\mathbb{R}P^2,\mathbb{Z}/2)$.
        Thus
        $(\pi_1\circ \Delta)^*\gamma\cup (\pi_2\circ \Delta)^*\gamma$ 
	is equal to $\gamma^2\neq 0\in H^2(\mathbb{R}P^2,\mathbb{Z}/2)$.
        On the other hand it is equal to
        $\Delta^*(\pi_1^*\gamma\cup \pi_2^*\gamma)=0$,
	thus no such $\Delta$ exists.
\end{rmrk}

Fix a (non functorial) Moore space construction $\MooreSp(-)$.
For $P_*\in D^{<-1}_{\mathrm{perf}}(\Ab)$ we generalize it by setting
\begin{equation}\label{general_moore}
\MooreSp(P_*):=\bigvee_i
		\MooreSp(H_i(P_*),i)\in \simpl{\Fin}_*.
\end{equation}
One can always assume that $\MooreSp(P_*)$ has no non-trivial simplices
in degrees $\leq 1$, thus
by Whitehead's theorem any two such constructions $\MooreSp(P_*)$
and $\MooreSp'(P_*)$ are homotopy equivalent and are suspensions spaces.
In particular $\MooreSp(P_*)$ is a comonoid in the homotopy category of simplicial sets.
For $X\in \simpl{\Fin}_*$ with the base point $\pt\in X$, call
$$\tilde{C}_*(X)=N(\mb{Z}\langle X/\pt\rangle),$$
the complex of reduced chains of $X$.

Note that there is a non canonical isomorphism 
\begin{equation}\label{der_splt}
	P\simeq \oplus_i H_i(P)[i]\in D(\Ab)
\end{equation} which is identity on $H_i(-)$.
This provides an isomorphism $\tilde{C}_*(\MooreSp(P))\simeq P\in D^{<-1}_{perf}(\Ab)$.

\begin{cor}\label{cor:moore}
	Given $P,Q\in D^{<-1}_{perf}(\Ab)$ fix the Moore spaces $\MooreSp(P),\MooreSp(Q)$.
	The natural map
	$$\Maps_{\simpl{\Fin}_*}(\MooreSp(P),\MooreSp(Q))\lar \Hom_{D^{<-1}_{perf}(\Ab)}(\tilde{C}_*(\MooreSp(P)),\tilde{C}_*(\MooreSp(Q))),$$
	is surjective.
\end{cor}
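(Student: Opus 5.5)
We reduce the statement to Proposition \ref{prop:maps_of_moore} by decomposing a morphism in $D(\Ab)$ along the splittings \eqref{der_splt}. Write $P\simeq \oplus_i A_i[i]$ and $Q\simeq \oplus_j B_j[j]$ with $A_i=H_i(P)$ and $B_j=H_j(Q)$, so that $\MooreSp(P)=\bigvee_i \MooreSp(A_i,i)$ and $\MooreSp(Q)=\bigvee_j\MooreSp(B_j,j)$. Since $\Ab$ has global dimension one, a morphism $\phi\colon P\ar Q$ in $D(\Ab)$ is a matrix of components $\phi_{ij}\in \Hom_{D(\Ab)}(A_i[i],B_j[j])$, and these vanish unless $j=i$ (giving $\Hom_{\Ab}(A_i,B_i)$) or $j=i+1$ (giving $\Ext^1_{\Ab}(A_i,B_{i+1})$). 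Because $\tilde{C}_*$ takes wedges to direct sums, it is enough to realize each column $\phi_i=(\phi_{ii},\phi_{i,i+1})\colon A_i[i]\ar B_i[i]\oplus B_{i+1}[i+1]$ by a based map $\MooreSp(A_i,i)\ar \MooreSp(B_i,i)\vee\MooreSp(B_{i+1},i+1)$. We then assemble these maps on the wedge $\MooreSp(P)$ and compose with the inclusions into $\MooreSp(Q)$.

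For the diagonal entry $\phi_{ii}$, part 1 of Proposition \ref{prop:maps_of_moore} gives a map $U\colon\MooreSp(A_i,i)\ar\MooreSp(B_i,i)$ with $H_i(U)=\phi_{ii}$; this uses $i>1$. For the off-diagonal entry, part 2 gives $V\colon \MooreSp(A_i,i)\ar\MooreSp(B_{i+1},i+1)$ whose induced class is $\phi_{i,i+1}$. The components must then be combined. Here $\MooreSp(A_i,i)$ is a suspension and hence a co-H-space, so we set $W=(U\vee V)\circ\nu$, where $\nu\colon \MooreSp(A_i,i)\ar\MooreSp(A_i,i)\vee\MooreSp(A_i,i)$ is the pinch map. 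Since $\tilde{C}_*(\nu)$ is the diagonal in $D(\Ab)$, $\tilde{C}_*(W)=\tilde C_*(U)+\tilde C_*(V)$ has the required components.

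One subtlety needs a check: "induces $\phi_{ii}$" on homology does not by itself pin down the chain-level class in $\Hom_D(A_i[i],B_i[i]\oplus B_{i+1}[i+1])$. We must verify that $\tilde{C}_*$ applied to a map of Moore spaces recovers the derived-category morphism, and not only its effect on $H_*$. The plan is to use the map $[\MooreSp(A,n),Y]\ar\Hom_D(\tilde C_*\MooreSp(A,n),\tilde C_*Y)$. It is compatible with the short exact sequences $0\ar\Ext^1\ar[-,-]\ar\Hom\ar0$ arising from the cofiber sequence in the proof of Proposition \ref{prop:maps_of_moore}, on the topological side and on the algebraic side (the latter comes from the triangle $\mb{Z}^R[n]\ar\mb{Z}^G[n]\ar A[n]$). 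We apply this with $Y=\MooreSp(B_i,i)\vee\MooreSp(B_{i+1},i+1)$, whose $\pi_i$ is $B_i$ and whose $\pi_{i+1}$ surjects onto $H_{i+1}=B_{i+1}$ via Hurewicz. Both outer terms of the topological sequence then surject onto the algebraic ones, and a diagram chase gives surjectivity in the middle. This argument already produces the whole column at once, which makes the pinch-map construction above optional.

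Finally, we pass from homotopy classes to simplicial maps. By the simplicial approximation theorem and the discussion following \eqref{moore_cone}, every homotopy class is represented by a simplicial map after replacing $\MooreSp(P)$ by a fine enough subdivision. This is the expected obstacle. The corollary is about $\Maps_{\simpl{\Fin}_*}$ for fixed models, so we either enlarge the model of $\MooreSp(P)$ through subdivision, which is allowed since the models are fixed only up to this freedom, or interpret $\Maps$ as the derived mapping space, meaning maps into a fibrant replacement. Homotopic maps induce equal morphisms in $D(\Ab)$, so the chosen representative realizes $\phi$, which proves surjectivity.
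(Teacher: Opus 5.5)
Your proposal is correct and follows essentially the same route as the paper. You split $P$ and $Q$ into shifted homology, observe that only the $\Hom$ and $\Ext^1$ components with $j=i,i+1$ survive, realize these components via Proposition \ref{prop:maps_of_moore}, add them using the co-H (comonoid) structure of the Moore spaces, and pass to simplicial maps by simplicial approximation. The paper leaves two points implicit that you handle correctly: that the $\Ext^1$ part is really hit by $\tilde{C}_*$ (your map of universal-coefficient sequences under Hurewicz), and that simplicial approximation forces a subdivision of the source model.
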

\begin{proof}
	Using simplicial approximation it is enough to prove the surjectivity of the map
	\begin{equation}\label{eq:claim}
		[\MooreSp(P),\MooreSp(Q)]\ar {[\tilde{C}^{sing}_*(\MooreSp(P)),\tilde{C}^{sing}_*(\MooreSp(Q))]},
	\end{equation}
	where the brackets denote the morphisms in the categories $\Ho(\Top_*)$ and $D(\Ab)$, respectively.
	Using the fact that $\MooreSp(P)$ is a comonoid, the map above is a group homomorphism.
	The identification $P\simeq \oplus_i H_i(P)[i]$ chosen in \eqref{general_moore}, and similarly for $Q$,
	establish the following commutative diagram
\begin{equation}
\begin{tikzcd}
	{[\MooreSp(P),\MooreSp(Q)]}\ar[r] & {[P,Q]\simeq [\tilde{C}_*\big(\MooreSp(P)\big),\tilde{C}_*\big(\MooreSp(Q)\big)]}\\
	{[\MooreSp(H_i(P),i), \MooreSp(H_j(Q),j)]}\ar[u]\ar[r] & {[H_i(P)[i],H_j(Q)[j]]\simeq [\tilde{C}_*\big(\MooreSp(H_i(P),i)\big),\tilde{C}_*\big(\MooreSp(H_j(Q),j)\big)]}\ar[u]
\end{tikzcd}
\end{equation}
	By exactness of the sequence
	$$0\ar \bigoplus_i \Ext^1(H_i(P),H_{i+1}(Q))\lar \Hom_{D(\Ab)}(P,Q)\lar \bigoplus_i \Hom(H_i(P),H_i(Q))\ar 0,$$
	the surjectivity of \eqref{eq:claim} follows from surjectivity 
	of the map $[\MooreSp(A,i),\MooreSp(A',j)]\ar {[A[i],A'[j]]}$ for all $A,A'\in \Ab^{fg}$ and $j=i,i+1$.
	This case is covered by
	\ref{prop:maps_of_moore}.
\end{proof}

\subsection{Functors $K'$ and $K''$}
Recall that the Dold-Kan functor $K\colon \Ch^{\le 0}(\Ab)\ar \simpl{\Ab}$
preserves weak-equivalences and induces the equivalence 
$K\colon D^{\le 0}(\Ab)\ar \Ho(\simpl{\Ab})$,
where $\Ho(\simpl{\Ab})$ denotes the localization of $\simpl{\Ab}$ with respect
to homotopy equivalences of geometric realizations of the simplicial sets.
By definition the embedding functor $I'\colon \Ab'\ar \Ab$ is equivalent to  
$\mb{Z}\langle -/\pt\rangle\colon \Fin_*\simeq \Ab'\ar \Ab$.
The induced functor $\simpl{\Ab'}\ar \Ho(\simpl{\Ab})$ admits a factorization
\begin{equation}\label{fact'}
	\simpl{\Ab'}\ar[p'] \Ho(\simpl{\Ab'})\ar[i'] \Ho(\simpl{\Ab}),
\end{equation}
where $i'$ is faithful, and $p$ is full and essentially surjective.
Concretely, we define $\Ho(\simpl{\Ab'})$ as the quotient category $\simpl{\Ab'}/\sim$, where
the equivalence relation $\sim$ identifies a pair of arrows $u,v\in \Hom_{\simpl{\Ab'}}(X,Y)$ iff $I'(u)=I'(v)\in \Hom_{\Ho(\simpl{\Ab})}(X,Y)$.
Notice that though $\Ab'\simeq \Fin_*$, the natural functor 
from the homotopy category of spaces $\Ho(\simpl{\Fin}_*)$ to $\Ho(\simpl{\Ab'})$ is not faithful. 

\begin{theorem}\label{thrm:good_s_dold_kan}
        There is a strong symmetric monoidal functor
	$$K'\colon \big(D^{<-1}_{\mathrm{perf}}(\Ab),\oplus\big)\ar \big(\Ho(\simpl{\Ab'},\oplus)\big),$$
        defined up to a natural isomorphism, such that
        $i'\circ K'$ is
	naturally isomorphic, as a monoidal functor, to 
	the Kan functor $K\colon \big(D^{<-1}_{perf}(\Ab),\oplus\big)\ar \big(\Ho(\simpl{\Ab},\oplus)\big)$.
\end{theorem}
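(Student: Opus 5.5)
We define $K'$ on objects through Moore spaces and on morphisms through Corollary \ref{cor:moore}. For $P\in D^{<-1}_{perf}(\Ab)$ (homological grading, homology in degrees $\ge 2$) we fix once and for all a finite simplicial Moore space $\MooreSp(P)\in\simpl{\Fin}_*$ as in \eqref{general_moore}, together with the chosen splitting \eqref{der_splt}. This gives an isomorphism $\phi_P\colon \tilde C_*(\MooreSp(P))\simeq P$ in $D(\Ab)$. We set
$$K'(P):=p'\big(\mb{Z}\langle \MooreSp(P)/\pt\rangle\big)\in \Ho(\simpl{\Ab'}).$$
Since $N(\mb{Z}\langle X/\pt\rangle)=\tilde C_*(X)$, we have $i'K'(P)\simeq K(\tilde C_*(\MooreSp(P)))\simeq K(P)$ via $K(\phi_P)$. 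This supplies the candidate natural isomorphism $\eta_P\colon i'K'(P)\ar[\sim] K(P)$.

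For a morphism $u\in\Hom_{D(\Ab)}(P,Q)$, Corollary \ref{cor:moore} produces a simplicial map $U\colon \MooreSp(P)\ar \MooreSp(Q)$ (after subdivision; we replace $\MooreSp(P)$ by a subdivision and use the canonical last-vertex map to identify the two, which is an isomorphism in $\Ho$). The map $U$ satisfies $\phi_Q\circ\tilde C_*(U)\circ\phi_P^{-1}=u$. We define $K'(u):=p'(\mb{Z}\langle U/\pt\rangle)$. The point of passing to the quotient $\Ho(\simpl{\Ab'})$ is exactly that this is well defined: two lifts $U,U'$ of the same $u$ have $I'(U)=I'(U')$ in $\Ho(\simpl{\Ab})$, because $i'$ of each is identified through $\eta$ with $K(u)$ and $K$ is an equivalence $D^{\le0}\simeq\Ho(\simpl{\Ab})$. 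By the definition of $\sim$ they are then equal. Functoriality follows the same way: $U_v\circ U_u$ is a lift of $v\circ u$, so $K'(vu)=K'(v)K'(u)$, and the identity lifts to the identity. Naturality of $\eta$ holds by construction, since $i'K'(u)$ corresponds to $K(u)$ under $\eta$. Faithfulness of $i'$ also shows that $K'$ is determined up to unique natural isomorphism once $i'K'\simeq K$ is imposed on objects. Different choices of Moore spaces give isomorphic functors: a lift of $\id_P$ between two models is an isomorphism in $\Ho(\simpl{\Ab'})$, since its image under the faithful functor $i'$ is invertible and the lift of the inverse provides an inverse.

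For the monoidal structure we may choose $\MooreSp(P\oplus Q):=\MooreSp(P)\vee\MooreSp(Q)$. By the remark on uniqueness above this is harmless, as any other choice is canonically isomorphic. Then $\mb{Z}\langle (X\vee Y)/\pt\rangle=\mb{Z}\langle X/\pt\rangle\oplus\mb{Z}\langle Y/\pt\rangle$ in $\simpl{\Ab'}$, which gives structure isomorphisms $K'(P)\oplus K'(Q)\simeq K'(P\oplus Q)$. The associativity, unit and symmetry constraints are equalities of morphisms in $\Ho(\simpl{\Ab'})$. Because $i'$ is faithful and strong monoidal, each constraint can be checked after applying $i'$ and transporting along $\eta$. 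There it reduces to the corresponding constraint for $K$, which is strict additive ($K$ preserves direct sums). For the same reason $\eta$ is monoidal.

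The main obstacle is naturality with respect to the structure maps: one must check that $K'(u\oplus v)$ agrees with $K'(u)\oplus K'(v)$ under the wedge identification. This is handled as before. $U\vee V$ is a lift of $u\oplus v$, and the lifting is unique up to $\sim$ by faithfulness of $i'$. Hence every coherence question collapses to the fact that $K$ is a monoidal equivalence. The only genuine topological input is the surjectivity of Corollary \ref{cor:moore}, which is where the hypothesis that $P$ has homology in degrees $\ge2$ is used.
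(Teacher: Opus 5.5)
Structurally this is the paper's argument made explicit. The paper notes that $N\circ i'$, restricted to $1$-connected finite-type objects of $\Ho(\simpl{\Ab'})$, is faithful (by definition of $\sim$), and full and essentially surjective (by Corollary~\ref{cor:moore}). It takes $K'$ to be an inverse equivalence and gets the monoidal structure from $\oplus$ being the coproduct on both sides. Your choice of Moore spaces, lifting of morphisms, and coherence checks through the faithful $i'$ build that inverse by hand.

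The genuine gap is the lifting step, and your parenthetical about subdivision does not close it. $\Ho(\simpl{\Ab'})$ is a quotient of $\simpl{\Ab'}$, not a localization: every morphism in it is represented by an honest based simplicial map. So a simplicial approximation $U\colon \mathrm{sd}^k\MooreSp(P)\to\MooreSp(Q)$ is not a morphism $K'(P)\to K'(Q)$. Nor is the last-vertex map $\mathrm{sd}^kX\to X$ invertible there: for $X=\Delta^2/\partial\Delta^2$, every based simplicial map $X\to\mathrm{sd}X$ is constant. No choice of models helps either. Up to isomorphism every object of $\simpl{\Ab'}$ is $\mb{Z}\langle X/\pt\rangle$ with $X$ levelwise finite. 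If $\tilde C_*(X)\simeq\mb{Z}[2]$, the effect of a simplicial self-map $f$ on $H_2$ is determined by $f_2\colon X_2\to X_2$. So only finitely many elements of $\Hom_{D(\Ab)}(\mb{Z}[2],\mb{Z}[2])=\mb{Z}$ are realized, while $i'\circ K'\simeq K$ forces $K'$ to be injective on this group. Hence the surjectivity in Corollary~\ref{cor:moore} fails for honest simplicial maps between fixed finite models, and with the quotient definition of $\Ho(\simpl{\Ab'})$ no such $K'$ exists. The paper's proof has the same hole, hidden in the words ``using simplicial approximation''.

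What your parenthetical actually needs is for weak equivalences to be inverted, and that is the repair. Define $\Ho(\simpl{\Ab'})$ by first localizing $\simpl{\Ab'}$ at the maps $w$ for which $I'(w)$ is a weak equivalence, and only then imposing $\sim$. Then $\MooreSp(P)\xleftarrow{\sim}\mathrm{sd}^k\MooreSp(P)\xrightarrow{U}\MooreSp(Q)$ is a legitimate morphism and fullness holds. The rest of your argument goes through as written: well-definedness and functoriality from faithfulness of $i'$, wedge models for sums, and coherence checked after applying $i'$. The cost is that Lemma~\ref{lemma:key} must be rechecked for the new category. It survives: such $I'(w)$ are weak equivalences between levelwise free simplicial abelian groups, hence homotopy equivalences, which $F$ preserves by the Dold--Puppe observation recalled in the paper. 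So $F'$ factors through the localization.
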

\begin{proof}
	Denote by $\Ho^{<-1,ft}(\simpl{\Ab})$ the full subcategory of $\Ho(\simpl{\Ab})$
	spanned by 1-connected simplicial groups of finite type, i.e. such that the direct sum of all homotopy groups $\oplus_i \pi_i$ is finitely generated.
	Consider the following commutative diagram of categories
\begin{equation}
\begin{tikzcd}
	\Ho^{<-1,ft}(\simpl{\Ab})\ar[r,"K"]& D^{<-1}_{perf}(\Ab)\\
	\Ho^{<-1,ft}(\simpl{\Ab'})\ar[u,"i'"]\ar[ru]
\end{tikzcd}
\end{equation}
	Here $i'$ is faithful, $K$ is Dold-Kan equivalence of homotopy categories, while the diagonal arrow
	equal to $K\circ i'$
	is full and essentially surjective by \ref{cor:moore}. 
	Thus $\Ho^{<-1,ft}(\simpl{\Ab'})\ar D^{<-1}_{perf}(\Ab)$ is an equivalence of categories.
	Let $K'$ be an inverse of the diagonal arrow followed by the embedding $\Ho^{<-1,ft}(\simpl{\Ab'})\ar \Ho(\simpl{\Ab'})$.
	Clearly $K'$ preserve finite coproducts. 
	Since the monoidal structures on the underlying categories are cocartesian, the functor $K'$ 
	is strong symmetric monoidal with respect to $\oplus$, we have the isomorphism $i'\circ K'\simeq K$.
\end{proof}
Let $D^{<-1}_{perf}(\Ab)^\vee$ denotes the image of
$D^{<-1}_{perf}(\Ab)$ under the functor $\mc{RH}om_{D(\Ab)}(-,\mb{Z})$.
The functor $\Hom_{\Ab}(-,\mathbb{Z})$
induces an equivalence $\Ab'^{\op}\ar[\sim] \Ab''$
in $\Ab$. Similarly we have an equivalence $D^{<-1}_{perf}(\Ab)^{\op}\ar[\sim] D^{<-1}_{perf}(\Ab)^{\vee}$.
By the definition the monoidal structure $(\Ab',\oplus)$ (resp. $(\Ab'',\oplus)$) is cocartesian (resp. cartesian):
in both cases it is the direct sum in $\Ab$.
Similarly to \eqref{fact'}, define $\Ho(\cosimpl{\Ab}'')$ by the factorization:
$$\cosimpl{\Ab''}\ar[p''] \Ho(\cosimpl{\Ab}'')\ar[i''] \Ho(\Ab),$$
into faithful $i''$, and full and essentially surjective $p''$.

For  $P\in D^{<-1}_{perf}(\Ab)^{\vee}$, let $P^{\vee}=\mc{RH}om_{D(\Ab)}(P,\mb{Z})\in D^{<-1}_{perf}(\Ab)$.
As a direct corollary we obtain:
\begin{cor}\label{thrm:good_cs_dold_kan}
        The functor
	$$K''\colon (D^{<-1}_{\mathrm{perf}}(\Ab)^\vee,\oplus)\to (\Ho(\cosimpl{\Ab''}),\oplus),$$
	defined by the formula $K''(P)=K'(P^{\vee})^{\vee}$
        is strong symmetric monoidal;
        $i''\circ K''$ is isomorphic,
	as a monoidal functor,
	to the Kan functor $\big(D^{<-1}_{perf}(\Ab)^{\vee},\oplus\big)\ar \big(\Ho(\cosimpl{\Ab},\oplus) \big) $.
\end{cor}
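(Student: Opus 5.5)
The corollary follows from Theorem \ref{thrm:good_s_dold_kan} by applying the duality functor $\Hom_{\Ab}(-,\mb{Z})$ in three places: on the derived category, on simplicial/cosimplicial groups, and on $\Ab'$ versus $\Ab''$. The plan is to write $K''$ as a composite of three functors, each of which is strong symmetric monoidal, and then check the comparison with $K$ piece by piece.

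\emph{Step 1: the duality functors.} Levelwise dualization $V\mapsto V^\vee$ gives a contravariant functor $\simpl{\Ab^{free,fg}}\ar \cosimpl{\Ab^{free,fg}}$. By the definition of $\Ab''$, it restricts to $\simpl{\Ab'}^{\op}\ar \cosimpl{\Ab''}$. Dualization is additive and sends the simplicial identities to the cosimplicial ones, so it preserves simplicial homotopies. Hence for maps $u,v$ in $\simpl{\Ab'}$,
$$u\sim v \iff u^\vee\sim v^\vee,$$
because on levelwise free finite type objects the relation $\sim$ is detected after applying the embedding into $\Ho(\simpl{\Ab})$, respectively $\Ho(\cosimpl{\Ab})$, and dualization commutes with these embeddings. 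This produces an equivalence $\Ho(\simpl{\Ab'})^{\op}\simeq \Ho(\cosimpl{\Ab''})$ compatible with $i'$ and $i''$. On the derived side, $\mc{RH}om(-,\mb{Z})$ gives $D^{<-1}_{perf}(\Ab)^\vee\simeq (D^{<-1}_{perf}(\Ab))^{\op}$, with inverse given by the same formula, since perfect complexes are reflexive.

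\emph{Step 2: monoidality.} Each duality functor sends $\oplus$ to $\oplus$. Under the op, the coproduct on the source corresponds to the product on the target, and in additive categories both are $\oplus$. So both dualities are strong symmetric monoidal for $\oplus$. The same holds for $K'$ by Theorem \ref{thrm:good_s_dold_kan}. Therefore $K''=(-)^\vee\circ K'\circ(-)^\vee$ is strong symmetric monoidal. Alternatively, one can use that $(\Ab'',\oplus)$ is cartesian and that $K''$ preserves finite products.

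\emph{Step 3: comparison with the Kan functor.} Use the natural isomorphism $i'\circ K'\simeq K$ from Theorem \ref{thrm:good_s_dold_kan}. We need a natural isomorphism of monoidal functors
$$K(Q)^\vee\simeq K(Q^\vee),$$
identifying the levelwise dual of the simplicial Kan construction with the cosimplicial Kan construction of the dual cochain complex. The plan is:
\begin{itemize}
\item[(a)] Take $Q$ represented by a bounded complex of finitely generated free groups in nonpositive degrees.
\item[(b)] Observe that $N$ of a levelwise dual is the dual of $N$: the Moore normalization is defined by kernels of faces, respectively cokernels of codegeneracies, which are exchanged under duality on free finite type objects.
\item[(c)] Conclude that $N(K(Q)^\vee)\simeq Q^\vee$ naturally, and hence $K(Q)^\vee\simeq K(Q^\vee)$, which represents $\mc{RH}om(Q,\mb{Z})$ in the homotopy category.
\end{itemize}
Both sides take $\oplus$ to $\oplus$ compatibly, so the isomorphism is monoidal.

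\textbf{Main obstacle.} The main work is Step 3: checking that the identification of normalizations under duality is natural and compatible with homotopies, so that it descends to the homotopy categories. The point to verify is that the duality of Step 1 intertwines the quotient relations defining $\Ho(\simpl{\Ab'})$ and $\Ho(\cosimpl{\Ab''})$. Once that holds, the remaining steps are formal.
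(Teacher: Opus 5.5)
Your proposal is correct and takes essentially the paper's route: the paper records this as a direct corollary of Theorem~\ref{thrm:good_s_dold_kan} by dualizing, $K''=(-)^\vee\circ K'\circ(-)^\vee$, and your Steps 1--3 spell out why levelwise duality intertwines the congruences defining $\Ho(\simpl{\Ab'})$ and $\Ho(\cosimpl{\Ab''})$ and commutes with normalization. There are two harmless slips: the quotient by degeneracies dualizes to the intersection of kernels of codegeneracies (kernels of faces dualize to cokernels of cofaces, not of codegeneracies), and $\Ab'$, $\Ho(\simpl{\Ab'})$ are not additive, though the canonical isomorphism $(E\oplus F)^\vee\simeq E^\vee\oplus F^\vee$ already gives the monoidality you need.
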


\subsection{Equivalence of left derived functors}
Assume $F\colon \Ab\ar \Mod^{flat}(\kk)$ is a functor.

\bigskip

1. 
Denote by $F'\colon \Ab'\ar \Ab$ the restriction of $F$ along the embedding $I'\colon \Ab'\ar \Ab$.
By \eqref{eq:quillen_der_s} and Theorem \ref{thrm:good_s_dold_kan}, the functor $\mb{L}F\colon D^{<-1}_{perf}(\Ab)\ar D(\Mod(\kk))$ is equal to the composition
\begin{equation}
\begin{tikzcd}
	D^{<-1}_{perf}(\Ab)\ar[r,"K'"]\ar[rr,bend left=20, "K"]& \Ho(\simpl{\Ab'})\ar[r,"i'"]& \Ho(\simpl{\Ab})\ar[r,"\mb{L}F_s"]& \Ho(\simpl{\Mod(\kk)})\ar[r,"N"]& D(\Mod(\kk))
\end{tikzcd}
\end{equation}
where $\mb{L}F_s\colon \Ho(\simpl{\Ab})\ar \Ho(\simpl{\Mod(\kk)})$ is evaluated by applying $F$ to a cofibrant replacement in the model category $\simpl{\Ab}$.

\begin{lemma}\label{lemma:key}
	The composition
\begin{equation*}
\begin{tikzcd}
	\simpl{\Ab'}\ar[r,"p'"]&\Ho(\simpl{\Ab'})\ar[r,"i'"]& \Ho(\simpl{\Ab})\ar[r,"\mb{L}F_s"]& \Ho(\simpl{\Mod(\kk)})
\end{tikzcd}
\end{equation*}
	is naturally isomorphic to $F'\colon \simpl{\Ab'}\ar \Ho(\simpl{\Mod(\kk)})$.
Moreover, if $F$ is a strong symetric monoidal functor: $(\Ab^{free,fg},\oplus)\ar (\Mod(\kk),\otimes)$,
	then the isomorphism $F'\simeq \mb{L}F_s\circ i'\circ p'$ of functors
	$(\simpl{\Ab'},\oplus)\ar (\Ho(\simpl{\Mod(\kk)}),\otimes)$ is monoidal.
\end{lemma}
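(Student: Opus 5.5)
The plan is to use that every object of $\simpl{\Ab'}$ is already cofibrant in the model structure on $\simpl{\Ab}$. Then $\mb{L}F_s$ can be evaluated on $I'(X)$ with no replacement at all. Concretely, an object $X\in \simpl{\Ab'}$ is a simplicial abelian group that is degreewise free of finite rank, namely $X_n=\mb{Z}\langle S_n\rangle$ with $S_n$ the non-base points. Every simplicial abelian group is fibrant, and a degreewise free one is cofibrant; under Dold--Kan it corresponds to a bounded-below complex of projectives (the normalization of a degreewise free simplicial group is degreewise free). So the Quillen derived functor $\mb{L}F_s$, computed by applying $F$ degreewise to a cofibrant replacement, can use the identity replacement. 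This gives a canonical isomorphism $\mb{L}F_s(i'p'(X))\simeq F(I'(X))=F'(X)$ in $\Ho(\simpl{\Mod(\kk)})$.

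To make this natural in $X$, I will use the following standard fact. Restricted to cofibrant objects, $\mb{L}F_s$ is the functor induced on homotopy classes by $F$ applied degreewise, and $F$ preserves simplicial homotopies by the observation of Dold recalled in the preliminaries. For a morphism $u\colon X\to Y$ in $\simpl{\Ab'}$, the class $i'p'(u)$ is represented by the honest map $I'(u)$, since both source and target are cofibrant--fibrant. Its image under $\mb{L}F_s$ is therefore the class of $F(I'(u))=F'(u)$. Hence the identifications commute with morphisms, which gives a natural isomorphism $F'\simeq \mb{L}F_s\circ i'\circ p'$ of functors $\simpl{\Ab'}\to \Ho(\simpl{\Mod(\kk)})$. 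Well-definedness on the quotient $\Ho(\simpl{\Ab'})$ is automatic: it is built into the fact that $\mb{L}F_s$ factors through $\Ho(\simpl{\Ab})$.

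For the monoidal part, the monoidal structure on $\mb{L}F_s$ (equivalently on $\mb{L}F=N\circ \mb{L}F_s\circ K$) comes from the degreewise strong monoidal structure $F(A\oplus B)\simeq F(A)\otimes F(B)$ applied to cofibrant objects. In $\simpl{\Mod(\kk)}$ the tensor product is taken degreewise, and on $\Ho$ it is derived, but flatness of the values makes the degreewise tensor already derived. The monoidal structure on $F'$ is the restriction of that same degreewise isomorphism. The functors $p'$ and $i'$ are strict for $\oplus$, and the cofibrant replacement of $I'(X\oplus Y)$ is again the identity. The structure maps on both sides are therefore literally the same degreewise maps $F(X_n\oplus Y_n)\simeq F(X_n)\otimes F(Y_n)$, and the unit and symmetry constraints agree for the same reason.

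The main obstacle I expect is the bookkeeping showing that the chosen model of $\mb{L}F_s$, with its monoidal structure, really is ``apply $F$ to cofibrant objects'' with the identity replacement, compatibly with the monoidal constraints. In particular, if $\mb{L}F_s$ was defined through a functorial cofibrant replacement $QX\to X$, I must check that $F(QX)\to F(X)$ is an isomorphism in $\Ho$ for degreewise free $X$. This holds because a weak equivalence between cofibrant simplicial abelian groups is a simplicial homotopy equivalence, which $F$ preserves. I also need this comparison to be monoidal, which follows since $Q$ can be chosen so that $Q(X\oplus Y)\to QX\oplus QY$ is compatible with the degreewise map.
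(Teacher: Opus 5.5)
Your proposal is correct and uses the same key idea as the paper: objects of $\simpl{\Ab'}$ are cofibrant in $\simpl{\Ab}$, so $\mb{L}F_s$ can be computed on them by applying $F$ directly, and the paper's remark after the lemma states exactly this. The paper routes it through the Dold--Kan factorization $I'\simeq K\circ N\circ I'$, with $N\circ I'$ landing in $\Ch^{\le 0}(\Ab^{free,fg})$ and $F\circ K\simeq \mb{L}F_s\circ K$ there. You instead argue cofibrancy of degreewise free simplicial groups directly, and you spell out naturality and the monoidal compatibility in more detail than the paper's one-line treatment.
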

\begin{proof}
	Consider the following diagram
\begin{equation}
\begin{tikzcd}
\Ho(\simpl{\Ab'})\ar[r,"i'"] & 
		\Ho(\simpl{\Ab})\ar[r,"\mb{L}F_s"] & 
			\Ho(\simpl{\Mod(\kk)})\\
	\simpl{\Ab'}\ar[u,"p'"]\ar[r,"I'"]\ar[rru,bend right=20,"F'" {near end}]\ar[rd,dashed,swap,"N\circ I'"]& 
		\simpl{\Ab}\ar[u] &\\
			    &
		\Ch^{\le 0}(\Ab^{free,fg})\ar[u,"K"]\ar[uur,swap,"F\circ K"]&
\end{tikzcd}
\end{equation}
The square obviously commutes. Up to an isomorphism of functors, 
the right triangle commutes because the image of $K$ consists of cofibrant objects in $\simpl{\Ab}$ and hence
$F\circ K$ is naturally isomorphic to $\mb{L}F_s\circ K$.
	Since the normalized chains of a free simplicial abelian group is a complex of free abelian groups, $N\circ I'$ lands in 
	$\Ch^{\le 0}(\Ab^{free,fg})\subset \Ch^{\le 0}(\Ab)$. Hence the straight dashed arrow exists and makes the bottom triangle commute.
Then the bended arrow is naturally isomorphic to $F\circ K\circ N\circ I'\simeq F\circ I'=F'$.
Thus $\mb{L}F_s\circ i'\circ p'$ is naturally isomorphic to $F'$.
If $F$ is monoidal, then the natural isomorphisms of functors discussed above are monoidal as well. This concludes the proof.
\end{proof}
It follows that $\mb{L}F_s\circ i'$ evaluated on $E\in \simpl{\Ab'}$ is naturally isomorphic to $F'(E)\in \Ho(\simpl{\Mod(\kk)})$, because $E$ is cofibrant in $\simpl{\Ab}$.
Hence the functor $N\circ \mb{L}F_s\circ i'\circ K'\simeq \mb{L}F$ is determined by the restriction $F'\colon \Ab'\ar \Mod(\kk)$.
As a direct corollary we obtain:
\begin{thrm}\label{thrm:LF_LG}
	Suppose the pair of functors $F,G\colon \Ab^{\mathrm{fg,free}}\ar \Mod^{flat}(\kk)$
	is provided with a morphism $\phi'\colon F'\ar G'$ of functors $\Ab'\ar \Mod^{flat}(\kk)$.
                    Then $\phi'$ induces a natural morphism
                of left derived functors
                $$\phi\colon \mb{L}F\ar \mb{L}G,$$
		from $D^{<-1}_{\mathrm{perf}}(\Ab)$ to $D(\Mod(\kk))$.
                Moreover, if the functors
                $F,G\colon (\Ab^{\mathrm{fg,free}},\oplus)\ar 
			(\Mod^{\mathrm{flat}}(\kk),\otimes)$
		are provided with a strong symmetric monoidal structure and
		$\phi'\colon F'\ar G'$ is monoidal, then 
		$\phi\colon \mb{L}F\ar \mb{L}G$ is monoidal as well.
\end{thrm}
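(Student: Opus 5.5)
The plan is to factor $\mb{L}F$ through $K'$, exactly as in the displayed composition before Lemma \ref{lemma:key}, and to observe that after this factorization only the restriction $F'$ is involved. By Theorem \ref{thrm:good_s_dold_kan} we have $\mb{L}F\simeq N\circ \mb{L}F_s\circ i'\circ K'$ on $D^{<-1}_{perf}(\Ab)$, and likewise for $G$. Lemma \ref{lemma:key} gives a natural isomorphism $\mb{L}F_s\circ i'\circ p'\simeq F'$ of functors $\simpl{\Ab'}\ar \Ho(\simpl{\Mod(\kk)})$, where $F'$ is applied levelwise. So the first step is to show that levelwise application of $F'$ descends along the quotient $p'$ to a functor $\ol{F'}\colon \Ho(\simpl{\Ab'})\ar \Ho(\simpl{\Mod(\kk)})$ with $\ol{F'}\circ p'=F'$ and $\ol{F'}\simeq \mb{L}F_s\circ i'$. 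This is automatic, since $p'$ is full and essentially surjective and the identity of objects. If $u\sim v$, then $i'p'(u)=i'p'(v)$, hence $F'(u)=F'(v)$ in $\Ho(\simpl{\Mod(\kk)})$ via the isomorphism of Lemma \ref{lemma:key}. We then get $\mb{L}F\simeq N\circ\ol{F'}\circ K'$.

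Next, $\phi'$ applied levelwise gives a natural transformation $F'\ar G'$ of functors $\simpl{\Ab'}\ar \simpl{\Mod(\kk)}$, and hence of functors to $\Ho(\simpl{\Mod(\kk)})$. Because $p'$ is the identity on objects and surjective on morphisms, a natural transformation $F'\ar G'$ is the same data as a natural transformation $\ol{F'}\ar\ol{G'}$. Naturality only has to be checked on arrows of the form $p'(u)$, and there it holds by naturality in $\simpl{\Ab'}$. Whiskering with $K'$ on the right and $N$ on the left, and conjugating by the isomorphisms above, yields $\phi\colon \mb{L}F\ar\mb{L}G$.

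For the monoidal part, $K'$ is strong symmetric monoidal by Theorem \ref{thrm:good_s_dold_kan}. The isomorphism of Lemma \ref{lemma:key} is monoidal, and $N$ is strong symmetric monoidal on homotopy categories via $\EZ$/$\AW$. If $\phi'$ is monoidal, then its levelwise extension is monoidal for the levelwise tensor product $F'(X\oplus Y)\simeq F'(X)\otimes F'(Y)$. The composite $\phi$ is then a composite of monoidal transformations, hence monoidal.

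The main obstacle I expect is bookkeeping rather than content: checking that the monoidal structure on $\mb{L}F$ defined via $K$ agrees with the one transported through $K'$. Concretely, this means the isomorphism $i'\circ K'\simeq K$ must be monoidal, which Theorem \ref{thrm:good_s_dold_kan} asserts. The coherence then holds because all monoidal structures on $\Ho(\simpl{\Ab'})$ and $D^{<-1}_{perf}(\Ab)$ are cocartesian, so any coproduct-preserving isomorphism is automatically monoidal.
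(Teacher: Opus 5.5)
Your proposal is correct and follows the paper's proof: you factor $\mb{L}F\simeq N\circ\mb{L}F_s\circ i'\circ K'$ via Theorem \ref{thrm:good_s_dold_kan}, use Lemma \ref{lemma:key} to identify $\mb{L}F_s\circ i'$ with the levelwise $F'$, whisker $\phi'$ with $K'$ and $N$, and get monoidality because every identification in the chain is monoidal. The only difference is that you explicitly check that $F'$ and $\phi'$ descend along the identity-on-objects, full functor $p'$, a step the paper simply asserts.
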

\begin{proof}
By key lemma \ref{lemma:key},
	$\phi'\colon F'\ar G'$ induces a morphism $\mb{L}F_s\circ i'\ar \mb{F}G_s\circ i'$. Define $\phi\colon \mb{L}F\ar \mb{L}G$ by
$$N\circ \mb{L}F_s\circ i'\circ K'\simeq N\circ\mb{L}F_s\circ K=\mb{L}F\lar N\circ \mb{L}G_s\circ i'\circ K'\simeq N\circ\mb{L}G_s\circ L=\mb{L}G.$$

In the case $F$ is exponential, i.e. $F$ is provided with strong symmetric monoidal structure,
the isomorphism $\mb{L}F_s\circ i'\circ p'\simeq F'$ is monoidal. Hence any symmetric monoidal morphism $\phi'\colon F'\ar G'$
induces monoidal $\phi$.
\end{proof}
In other words, if $F,G$ are exponential functors such that the augmented $\kk$-algebras $F'(\mb{Z})$ and $G'(\mb{Z})$ are
isomorphic, then there is an isomorphism of exponential functors $\mb{L}F\simeq \mb{L}G$ with the domain of definition as above.

\bigskip

2. Below we will formulate the dualization of \ref{lemma:key} and \ref{thrm:LF_LG} to $1$-coconnective case,
the proofs are similar to the above.
Let $F''\colon \Ab''\ar \Ab$ denote the restriction of $F$ along the embedding $I''\colon \Ab''\ar \Ab$.
\begin{lemma}\label{lemma:key_cs}
	The composition
\begin{equation*}
\begin{tikzcd}
	\cosimpl{\Ab''}\ar[r,"p''"]&\Ho(\cosimpl{\Ab''})\ar[r,"i''"]& \Ho(\cosimpl{\Ab})\ar[r,"\mb{L}F_{cs}"]& \Ho(\cosimpl{\Mod(\kk)})
\end{tikzcd}
\end{equation*}
	is naturally isomorphic to $F''\colon \cosimpl{\Ab''}\ar \Ho(\cosimpl{\Mod(\kk)})$.
Moreover, if $F$ is a strong symetric monoidal functor: $(\Ab^{free,fg},\oplus)\ar (\Mod(\kk),\otimes)$,
	then the isomorphism $F''\simeq \mb{L}F_{cs}\circ i''\circ p''$ of functors
	$(\cosimpl{\Ab}'',\oplus)\ar (\Ho(\simpl{\Mod(\kk)}),\otimes)$ is monoidal.
\end{lemma}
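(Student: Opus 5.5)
I would transpose the proof of Lemma \ref{lemma:key} verbatim to the cosimplicial setting, replacing simplicial objects by cosimplicial ones and $\Ab'$ by $\Ab''$. The first step is the commutative diagram
\begin{equation*}
\begin{tikzcd}
\Ho(\cosimpl{\Ab''})\ar[r,"i''"] & \Ho(\cosimpl{\Ab})\ar[r,"\mb{L}F_{cs}"] & \Ho(\cosimpl{\Mod(\kk)})\\
\cosimpl{\Ab''}\ar[u,"p''"]\ar[r,"I''"]\ar[rd,dashed,swap,"N\circ I''"] & \cosimpl{\Ab}\ar[u] & \\
 & \Ch^{\ge 0}(\Ab^{free,fg})\ar[u,"K"]\ar[uur,swap,"F\circ K"] &
\end{tikzcd}
\end{equation*}
The left square commutes by the very definition of $\Ho(\cosimpl{\Ab''})$ as the quotient of $\cosimpl{\Ab''}$ through which $I''$ factors. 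The dashed arrow exists because for $E\in\cosimpl{\Ab''}$ each $E^n$ is a finitely generated free abelian group. The normalization $N(E)$ is the intersection of kernels of codegeneracies, which is a direct summand of $E^n$ (split via the Dold--Kan decomposition). Hence it is again a complex of finitely generated free groups in nonnegative degrees, and $K\circ N\circ I''\simeq I''$.

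The second step is the right triangle, i.e.\ the identification $\mb{L}F_{cs}\circ K\simeq F\circ K$ on $\Ch^{\ge 0}(\Ab^{free,fg})$. By the description of $\mb{L}F_{cs}$ via \eqref{eq:quillen_der_cs}, it is computed by applying $F$ to a cofibrant replacement for the model structure transported from the projective structure on $\Ch^{\ge 0}(\Ab)$. A bounded-below complex of free groups is projectively cofibrant, so $K$ of it is cofibrant and no replacement is needed.

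This is the step I expect to require the most care, since the projective model structure on unbounded-above cochain complexes is subtle. If needed, I would instead argue directly as in the discussion preceding the Definition of $\mb{L}F$: $F$ preserves cosimplicial homotopies (Dold), and quasi-isomorphisms between bounded complexes of finitely generated free groups are homotopy equivalences. So on such objects $F\circ K$ already inverts weak equivalences, and it therefore agrees with its derived functor. Combining the two steps gives
\[
\mb{L}F_{cs}\circ i''\circ p''\simeq \mb{L}F_{cs}\circ K\circ N\circ I''\simeq F\circ K\circ N\circ I''\simeq F\circ I''=F''.
\]

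For the monoidal refinement, all functors involved carry compatible structures. $I''$, $p''$ and $i''$ are strong monoidal for $\oplus$ (the degreewise direct sum). $K$ and $N$ preserve direct sums. $F$ is strong monoidal degreewise, and the tensor product of cosimplicial $\kk$-modules is degreewise. The isomorphisms $K N\simeq\id$ and the cofibrancy identification are natural and compatible with direct sums, so each 2-cell above is a monoidal natural isomorphism. Hence the composite isomorphism $F''\simeq \mb{L}F_{cs}\circ i''\circ p''$ is monoidal with respect to $(\cosimpl{\Ab''},\oplus)\to(\Ho(\cosimpl{\Mod(\kk)}),\otimes)$, exactly as in Lemma \ref{lemma:key}.
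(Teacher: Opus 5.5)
Your proposal is correct and follows the paper's route. The paper proves this lemma only by saying it is the dual of Lemma \ref{lemma:key}, and your argument is that dualization: the same square/triangle diagram, $\mb{L}F_{cs}\circ K\simeq F\circ K$ because no cofibrant replacement is needed, and each 2-cell being monoidal. One small correction to your fallback argument: objects of $\cosimpl{\Ab''}$ need not have bounded normalization, so drop the word ``bounded'' and use instead that, since $\mb{Z}$ is hereditary, every quasi-isomorphism between complexes of free abelian groups is a homotopy equivalence (its cone is acyclic with free cycle groups, hence contractible).
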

\begin{thrm}\label{thrm:LF_LG_cs}
	Suppose the pair of functors $F,G\colon \Ab^{\mathrm{fg,free}}\ar \Mod^{flat}(\kk)$
	is provided with a morphism $\phi''\colon F''\ar G''$ of functors $\Ab''\ar \Mod^{flat}(\kk)$.
                    Then $\phi''$ induces a natural morphism
                of left derived functors
                $$\phi\colon \mb{L}F\ar \mb{L}G,$$
		from $D^{<-1}_{\mathrm{perf}}(\Ab)^{\vee}$ to $D(\Mod(\kk))$.
		If $\phi''$ is isomorphism, then $\phi$ is isomorphism as well.
                Moreover, if the functors
                $F,G\colon (\Ab^{\mathrm{fg,free}},\oplus)\ar 
			(\Mod^{\mathrm{flat}}(\kk),\otimes)$
		are provided with a strong symmetric monoidal structure and
		$\phi''$ is monoidal, then 
		$\phi$ is monoidal as well.
\end{thrm}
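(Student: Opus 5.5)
The plan is to dualize the proof of Theorem \ref{thrm:LF_LG}, with Corollary \ref{thrm:good_cs_dold_kan} and Lemma \ref{lemma:key_cs} taking the roles of Theorem \ref{thrm:good_s_dold_kan} and Lemma \ref{lemma:key}.

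\emph{Factorization.} By \eqref{eq:quillen_der_cs}, $\mb{L}F$ on $D^{<-1}_{perf}(\Ab)^\vee$ is $N\circ \mb{L}F_{cs}\circ K$. Corollary \ref{thrm:good_cs_dold_kan} gives a monoidal isomorphism $K\simeq i''\circ K''$, so
$$\mb{L}F\simeq N\circ \mb{L}F_{cs}\circ i''\circ K''.$$

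\emph{Cosimplicial key lemma.} Lemma \ref{lemma:key_cs} is proved in the same way as Lemma \ref{lemma:key}. An object $E\in\cosimpl{\Ab''}$ is a cosimplicial lattice, and $N(I''E)$ is a complex of finitely generated free groups in non-negative degrees. Hence $I''E\simeq K(N(I''E))$ is cofibrant for the model structure transported from the projective structure on $\Ch^{\ge 0}$, so $\mb{L}F_{cs}(I''E)\simeq F(I''E)=F''(E)$ naturally in $E$. Monoidality holds because $K$, $N$ and $I''$ preserve direct sums and $F$ is strong monoidal.

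\emph{Descent to $\Ho(\cosimpl{\Ab''})$.} A morphism $\phi''\colon F''\ar G''$ gives a natural transformation $F''\ar G''$ of functors $\cosimpl{\Ab''}\ar\Ho(\cosimpl{\Mod(\kk)})$, applied levelwise. By Lemma \ref{lemma:key_cs} this is a transformation $\mb{L}F_{cs}\circ i''\circ p''\ar \mb{L}G_{cs}\circ i''\circ p''$. Since $p''$ is the identity on objects and full, a transformation between functors that factor through $p''$ descends to one between the factored functors. Naturality only needs checking on morphisms of $\Ho(\cosimpl{\Ab''})$, and each of these lifts along $p''$. So we get $\mb{L}F_{cs}\circ i''\ar \mb{L}G_{cs}\circ i''$.

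\emph{Conclusion.} Whiskering with $K''$ on the right and $N$ on the left gives
$$\phi\colon \mb{L}F\simeq N\mb{L}F_{cs}i''K''\lar N\mb{L}G_{cs}i''K''\simeq \mb{L}G.$$
\begin{enumerate}
\item If $\phi''$ is an isomorphism, each component $F''(E)\ar G''(E)$ is a levelwise isomorphism of cosimplicial modules, so $\phi$ is an isomorphism.
\item If $\phi''$ is monoidal, every step is monoidal: the isomorphism $i''K''\simeq K$ (Corollary \ref{thrm:good_cs_dold_kan}), the isomorphism of Lemma \ref{lemma:key_cs}, and $N$ with its $\EZ$/$\AW$ structures, which is the same for $F$ and $G$. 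Hence $\phi$ is monoidal.
\end{enumerate}

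\emph{Main obstacle.} The delicate point is the cosimplicial key lemma. Cofibrancy of $I''E$ has to be justified in the transported model structure. Since $K\circ N\simeq\id$ and $N(I''E)$ consists of free abelian groups in degrees $\ge 0$, this should be fine. I also need to check that the identifications are natural with respect to \emph{all} morphisms of $\cosimpl{\Ab''}$, including maps that are not injective on bases.
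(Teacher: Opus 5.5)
Your proposal is correct and follows the paper's own route. The paper proves Theorem~\ref{thrm:LF_LG_cs} only by declaring it the dual of Lemma~\ref{lemma:key} and Theorem~\ref{thrm:LF_LG}: factor $\mb{L}F\simeq N\circ\mb{L}F_{cs}\circ i''\circ K''$ via Corollary~\ref{thrm:good_cs_dold_kan}, identify $\mb{L}F_{cs}\circ i''\circ p''\simeq F''$ by cofibrancy (Lemma~\ref{lemma:key_cs}), and transport $\phi''$. Your explicit descent along the full, identity-on-objects functor $p''$ and your treatment of the isomorphism case fill in details the paper leaves implicit.
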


As a practical application we have.
\begin{cor}\label{cor:LF_LG_algebras}
	Given exponential functors $F,G$ provided with an isomorphism of augmented coalgebras $F''(\mb{Z})\simeq G''(\mb{Z})$,
        for any $P\in D^{-1}_{\mathrm{perf}}(\Ab)^\vee$,
	there is
        an isomorphism of Hopf algebras 
	$\phi(P)\colon \mb{L}F(P)\simeq \mb{L}G(P)$ in $D(\Mod(\kk))$.
	The isomorphism 
        is natural in $P$.
\end{cor}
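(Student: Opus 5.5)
The plan is to deduce the statement from Theorem \ref{thrm:LF_LG_cs}, applied to the restrictions $F'',G''\colon \Ab''\ar \Mod^{flat}(\kk)$. The only input needed is a monoidal isomorphism $\phi''\colon F''\simeq G''$. By Proposition \ref{prop:groups_as_functors}(3), strong symmetric monoidal functors $(\Ab'',\oplus)\ar(\Mod(\kk),\otimes)$ are equivalent, via evaluation at $\mb{Z}$, to augmented cocommutative coalgebras. The given isomorphism of augmented coalgebras $F''(\mb{Z})\simeq G''(\mb{Z})$ therefore lifts to a monoidal natural isomorphism $\phi''\colon F''\ar G''$. Here one checks that the coalgebra structure on $F''(\mb{Z})$ is exactly the one produced by $\Delta\colon\mb{Z}\ar\mb{Z}^2$ and $\mb{Z}\ar 0$, which lie in $\Ab''$, so this coalgebra is the underlying augmented coalgebra of the Hopf algebra $F(\mb{Z})$.

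Theorem \ref{thrm:LF_LG_cs} then yields a monoidal natural isomorphism $\phi\colon\mb{L}F\ar\mb{L}G$ of functors $(D^{<-1}_{perf}(\Ab)^\vee,\oplus)\ar(D(\Mod(\kk)),\otimes)$. I read the domain $D^{-1}_{perf}(\Ab)^\vee$ in the statement as this category, and naturality in $P$ is part of the conclusion. Concretely, $\phi(P)$ is the composite
$$\mb{L}F(P)\simeq N(F''(K''(P)))\ar[N(\phi'')] N(G''(K''(P)))\simeq \mb{L}G(P),$$
which uses Corollary \ref{thrm:good_cs_dold_kan} and Lemma \ref{lemma:key_cs}.

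It remains to see that $\phi(P)$ is an isomorphism of Hopf algebras in $D(\Mod(\kk))$. The Hopf structure on $\mb{L}F(P)$ consists of the following data:
\begin{enumerate*}[label=(\roman*)]
\item the multiplication, the composite of the monoidal isomorphism $\mb{L}F(P)\otimes\mb{L}F(P)\simeq\mb{L}F(P\oplus P)$ with $\mb{L}F(+)$;
\item the comultiplication, $\mb{L}F(\Delta)$ followed by the inverse of that isomorphism;
\item the unit and counit, induced by $0\ar P\ar 0$;
\item the antipode, induced by $-1$.
\end{enumerate*}
All of these maps live in $D^{<-1}_{perf}(\Ab)^\vee$, which is closed under $\oplus$ and contains these morphisms. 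A monoidal natural transformation between strong monoidal functors commutes with each of these structure maps: naturality applied to $+$, $\Delta$, $0$ and $-1$, combined with compatibility with the monoidal isomorphisms, gives precisely the required commutative squares.

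The main obstacle is matching monoidal structures. Theorem \ref{thrm:LF_LG_cs} produces $\phi$ as monoidal for the strong monoidal structure transported through $K''$. The Hopf structure, on the other hand, is defined using $\EZ$/$\AW$ on $N(F(K(P)))$. So I must verify that the identification $i''\circ K''\simeq K$ of Corollary \ref{thrm:good_cs_dold_kan} is monoidal, and that Lemma \ref{lemma:key_cs} is compatible with $\EZ$. The first I would obtain by dualizing the cartesian/cocartesian argument of Theorem \ref{thrm:good_s_dold_kan}. For the second, the monoidal structure on $\mb{L}F$ factors as $N$ (with $\EZ$) composed with the strong monoidal $F$ on $\cosimpl{\Ab}$. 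Since $\phi''$ is a levelwise monoidal transformation of cosimplicial objects, it commutes strictly with $\EZ$ by naturality of $\EZ$.
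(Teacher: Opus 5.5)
Your proposal is correct and follows the paper's route. You lift the coalgebra isomorphism to a monoidal isomorphism $\phi''\colon F''\simeq G''$ via Proposition \ref{prop:groups_as_functors}(3), apply Theorem \ref{thrm:LF_LG_cs}, and get compatibility with the Hopf structure from naturality and monoidality of $\phi$; the paper only unpacks that last step, realizing $+$ and $\Delta$ by Moore-space maps $u,v$ (hence morphisms $V,U$ in $\cosimpl{\Ab''}$) and checking the two commuting squares for $\phi''$ levelwise, which is your naturality argument transported through $K''$.
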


The isomorphism $\phi(P)$ can be described as follows.
Given $P\in D^{<-1}_{perf}(\Ab)^{\vee}$, $K(P^{\vee})\in \simpl{\Ab}$ is isomorphic to 
$\mb{Z}\langle X/\pt\rangle$
for some generalized Moore space $X\simeq \MooreSp(P)$.
By \ref{cor:moore},
the morphisms $\mb{Z}\langle X/\pt\rangle\oplus \mb{Z}\langle X/\pt\rangle\ar[+] \mb{Z}\langle X/\pt\rangle$ 
and 
$\mb{Z}\langle X/\pt\rangle\ar[\Delta] \mb{Z}\langle X/\pt\rangle\oplus \mb{Z}\langle X/\pt\rangle$
in $\Ho(\simpl{\Ab})$, 
are induced by some based maps $u\colon X/\pt\vee X/\pt\ar X/\pt$ and $v\colon X/\pt\ar X/\pt\vee X/\pt$.
For example one can take $u$ equal to the projection, the existence of $v$ follows from the fact that $X$ is a suspension space.
So $U=\mb{Z}\langle u/\pt\rangle^{\vee}$ and $V=\mb{Z}\langle v/\pt\rangle^{\vee}$ are morphisms in $\cosimpl{\Ab''}$.
Consider the diagram
\begin{equation}
\begin{tikzcd}
	F(\mb{Z}\langle X/\pt\rangle^{\vee})^{\otimes 2}\ar[r,"\phi''\otimes \phi''"]\ar[d]
		& G(\mb{Z}\langle X/\pt\rangle^{\vee})^{\otimes 2}\ar[d]\\
	F(\mb{Z}\langle X/\pt\vee X/\pt\rangle^{\vee})\ar[r,"\phi''"]\ar[d,"F''(V)"]
		& G(\mb{Z}\langle X/\pt\vee X/\pt\rangle^{\vee})\ar[d,"G''(V)"]\\
	F(\mb{Z}\langle X/\pt\rangle^{\vee})\ar[r,"\phi''"]
		& G(\mb{Z}\langle X/\pt\rangle^{\vee})
\end{tikzcd}
\end{equation}
Here the top square is constructed using monoidal structure on $F'$,$G'$ and the identification
$\mb{Z}\langle X/\pt\vee X/\pt\rangle\simeq \mb{Z}\langle X/\pt\rangle^{\oplus 2}$. The square commutes since $\phi''$ is monoidal.
The bottom square commutes because $\phi''$ is a tranformation of functors.

Thus
$\phi''\colon N(F(\mb{Z}\langle X/\pt\rangle^{\vee}))\simeq N(G(\mb{Z}\langle X/\pt\rangle^{\vee}))$
is an isomorphism of algebras in $D(\Mod(\kk))$.
Similar considerations with $U$ imply that there is a well-defined isomorphism 
of bialgebras $\phi(P)\colon \mb{L}F(P)\simeq\mb{L}G(P)$ in $D(\Mod(\kk))$.

\begin{rmrk}
	The assumption on cohomology of $P$ is important. Denote by $K(A,n)$ the Eilenberg-MacLane space
	for an abelian group $A$. In the following section 
	we will introduce monoidal functors $\Bin$ 
	and $\Gamma$ such that $\Bin''\simeq \Gamma''$. There are isomorphisms
	$$\mb{L}^*\Bin(\mb{Z}/2[-2])\os{L}{\otimes} \mb{Z}/2\simeq H^*(K(\mb{Z}/2,1);\mb{Z}/2),$$
	and 
	$$\mb{L}^*\Gamma(\mb{Z}/2[-2])\os{L}{\otimes} \mb{Z}/2\simeq 
	H^*(K(\mb{Z},1);\mb{Z}/2)\otimes H^*(K(\mb{Z},2);\mb{Z}/2).$$
	This shows that the algebras $\mb{L}^*\Bin(\mb{Z}/2[-2])$ and $\mb{L}^*\Gamma(\mb{Z}/2[-2])$ are not
	isomorphic.
\end{rmrk}
\begin{rmrk}
	One can construct an isomorphism $\mb{L}F(\mb{Z}[-n])\simeq \mb{L}G(\mb{Z}[-n])$ directly.
	Namely $K(\mb{Z}[-n])$ is equal to the cochains of $\Delta^n/\partial \Delta^n$, hence 
	$K(\mb{Z}[-n])\in \cosimpl{\Ab''}$ and one can check that the complex $N(F(K(\mb{Z}[-n])))$ involves only coalgebra structure
	of $F(\mb{Z})$. 
	We get a tautological additive isomorphism of 
	$\mb{L}F(\mb{Z}[-n])$ and $\mb{L}G(\mb{Z}[-n])$. 
	Then a straighforward computation shows that the multiplication in $N(F(K(\mb{Z}[-n])))$, given by the Alexander-Whitney 
	formula for $\AW$, does not involve the multiplication in $F(\mb{Z})$ at all!
	Remarkably there is no evident comparsion morphism on the chain level 
	from $N(F(K(P)))$ to $N(G(K(P)))$
	already in the case $P=\mb{Z}/2[-n]$.
\end{rmrk}

\section{Functoriality of Dold-Puppe-Thom isomorphism}
Recall that $K\colon \Ch^{\le 0}(\Ab)\ar \simpl{\Ab}$ is the Kan functor.
Let $|-|\colon \simpl{\Ab}\ar \Top_*$ denotes the geometrical realization.
\begin{dfn}
	Given $P\in \Ch^{<0}(\Ab)$, the generalized Eilenberg-MacLane space is
	$$\EM(P)=|K(P)|\in \Top_*.$$
\end{dfn}
Clearly $\EM(P)$ is a functor in $P$. Since $K$ and $|-|$ preserve weak equivalences, 
the functor $\EM$ 
descends to the localizations: $\EM\colon D^{<0}(\Ab)\ar \Ho(\Top_*)$.
Similarly we can define $\EM\colon D^{<0}_{perf}(\Ab)\ar \Ho(\simpl{\Fin}_*)$.
For $A\in \Ab$, $\EM(A[n])$ is naturally homotopy equivalent to the usual Eilenberg-MacLane space $K(A,n)$.

The $\mb{Z}$-span is a strong symmetric monoidal functor
$\mb{Z}\langle -\rangle\colon (\Ab,\oplus)\ar (\Mod(\mb{Z})^{flat},\otimes),$
induces the corresponding left derived functor
$$\mb{L}\mb{Z}\langle -\rangle\colon (D^{<0}_{perf}(\Ab),\oplus)\ar (D(\Mod(\mb{Z})),\otimes).$$
Recall the following.
\begin{prop}\label{prop:chains_em}
	There is natural monoidal isomorphism
	$$C^{sing}_*(\EM(-))\simeq \mb{L}\mb{Z}\langle -\rangle,$$
	of strong symmetric monoidal functors from 
	$(D^{<0}_{perf}(\Ab),\oplus)$ to $D((\Mod(\mb{Z})),\otimes)$.
\end{prop}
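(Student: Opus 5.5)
The plan is to compare both sides at the level of simplicial abelian groups, before normalizing. For $P\in\Ch^{<0}(\Ab^{free,fg})$ put $X=K(P)$, a simplicial abelian group whose geometric realization is $\EM(P)$. By definition (see \eqref{eq:quillen_der_s}),
$$\mb{L}\mb{Z}\langle P\rangle = N(\mb{Z}\langle K(P)\rangle),$$
where $\mb{Z}\langle K(P)\rangle$ is the simplicial free abelian group on the underlying simplicial set of $K(P)$, so $N(\mb{Z}\langle K(P)\rangle)$ is the normalized simplicial chain complex of that simplicial set. The classical natural comparison $N(\mb{Z}\langle Y\rangle)\to C^{sing}_*(|Y|)$, induced by sending a simplex $y$ to its characteristic map $\Delta^n\to|Y|$, is a quasi-isomorphism for every simplicial set $Y$. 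Taking $Y=K(P)$ gives a natural quasi-isomorphism
$$\mb{L}\mb{Z}\langle P\rangle\lar C^{sing}_*(\EM(P)).$$
Both sides send chain homotopy equivalences to isomorphisms, so this descends to a natural isomorphism of functors on $D^{<0}_{perf}(\Ab)$.

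Next I check monoidality. Because $K$ preserves direct sums, $K(P\oplus Q)=K(P)\times K(Q)$, and hence $\mb{Z}\langle K(P\oplus Q)\rangle=\mb{Z}\langle K(P)\rangle\otimes\mb{Z}\langle K(Q)\rangle$ degreewise. On this side the monoidal structure is the Eilenberg--Zilber/Alexander--Whitney pair applied after normalization. On the singular side the structure comes from $\EM(P\oplus Q)\simeq\EM(P)\times\EM(Q)$, using that realization preserves finite products of simplicial sets (in compactly generated spaces), together with the topological Eilenberg--Zilber map $C^{sing}_*(U)\otimes C^{sing}_*(V)\to C^{sing}_*(U\times V)$. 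It remains to see that the simplicial-to-singular comparison intertwines the simplicial shuffle map with the topological one. Both are built from the same shuffle decomposition of $\Delta^p\times\Delta^q$, so they agree on the nose, or at worst up to the canonical acyclic-models homotopy. Symmetry then follows because the shuffle maps are strictly symmetric.

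Unit compatibility is immediate: $\EM(0)=\pt$ and $\mb{Z}\langle 0\rangle=\mb{Z}$.

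The main obstacle is making the monoidal compatibility precise in the derived category. My plan is to argue by acyclic models that any two natural chain maps between the functors $(Y,Z)\mapsto N\mb{Z}\langle Y\times Z\rangle$ and $C^{sing}_*(|Y|\times|Z|)$ lifting the identity on $H_0$ are naturally homotopic. This forces the square comparing the two Eilenberg--Zilber maps to commute in $D(\Ab)$, which is all that a monoidal isomorphism of functors into $D(\Mod(\mb{Z}))$ requires.
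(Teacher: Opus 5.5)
Your proposal is correct and follows essentially the paper's route: both identify $\mb{L}\mb{Z}\langle P\rangle$ with the normalized simplicial chains of the underlying simplicial set of $K(P)$ and compare them, monoidally, with the singular chains of $|K(P)|$. Your check that the unit $Y\to\mathrm{Sing}|Y|$ intertwines the simplicial and topological shuffle maps on the nose makes explicit what the paper only asserts. The one difference is small: you use the operational definition with free finitely generated representatives, where $K(P)$ is already cofibrant. The paper instead adds the observation that simplicial abelian groups are Kan complexes, so $\mb{Z}\langle -\rangle$ preserves their weak equivalences and no cofibrant replacement is needed for any representative.
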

\begin{proof}
	The cellular and singular chains on $\EM(P)$, with respect to the natural cw-structure, are quasi-isomorphic
	to $N(\mb{Z}\langle |K(P)|\rangle)$ with respect to the symmetric monoidal structures. Note that $\mb{Z}\langle |-|\rangle$
	passes through the forgetful functor $oblv\colon \simpl{\Ab}\ar \simpl{\Set}_*$.
	Since $oblv$ lands in Kan complexes and any weak equivalence of Kan complexes is a homotopy equivalence, it follows that 
	$\mb{Z}\langle|-|\rangle$ preserves weak equivalences of simplicial groups, hence
	$\mb{L}\mb{Z}\langle P\rangle$ is naturally isomorphic to $N(\mb{Z}\langle K(P)\rangle)$, i.e. 
	no cofibrant replacement is needed.
\end{proof}
In particular, the homology of Eilenberg-MacLane spaces $H_i(K(A,n))$ is naturally isomorphic to $\mb{L}_i\mb{Z}\langle A[n]\rangle$.

For a based set $(X,\pt)$ consider $n$-th symmetric power $\SP^n X=X^{\times n}/\Sigma_n$.
The base point induces the inclusions $\SP^n X\ari \SP^{n+1} X$, thus $\SP^n X/\SP^{n-1} X$ is the quotient 
$(X,\pt)^{\wedge n}/\Sigma_n$ of $n$-th smash product by the action of the permutation group.
Let 
$\SP^{\infty}(X,\pt)=\colim_{n}\SP^{n}X.$
We obtain strong symmetric monoidal functor 
$$\SP^{\infty}\colon (\Fin_*,\vee)\ar (\Set_*,\times).$$
Thus $\SP^{\infty}(X,\pt)$ is a free commutative monoid generated by $(X,\pt)$ with the unit $\pt$.
Similarly, we have a strong monoidal functor
$$|\mb{Z}\langle-/\pt \rangle|\colon (\Fin_*,\vee)\ar (\Set_*,\times).$$
evaluated by taking the free commutative \emph{group} generated by $(X,\pt)$.

Finally, the symmetric algebra functor 
$$\Sym\colon (\Ab^{free,fg},\oplus)\ar (\Mod(\mb{Z})^{flat},\otimes),$$ is strong symmetric monoidal, it is filtered by
subfunctors
$\Sym_{\le n}=\oplus_{i\le n}\Sym^i \subset \Sym$
\begin{lemma}\label{lemma:sym_sp}
1. There is a natural monoidal transformation 
	$$\SP^{\infty}\ar |\mb{Z}\langle -/\pt\rangle|,$$
	of strong symmetric monoidal functors $(\Fin_*,\vee)\ar (\Set_*,\times)$.

2. In terms of the identification $(\Ab',\oplus)=(\Fin_*,\vee)$, 
there is an isomorphism $$A_n\colon \Sym'_{\le n}\simeq \mb{Z}\langle -\rangle\circ \SP^n.$$
Moreover, there are monoidal isomorphisms
$$A\colon \Sym'\simeq \mb{Z}\langle -\rangle\circ \SP^{\infty},$$
and
$$B\colon \mb{Z}\langle -\rangle'\simeq \mb{Z}\langle -\rangle\circ |\mb{Z}\langle -/\pt\rangle|,$$
of strong symmetric monoidal functors $(\Ab',\oplus)\ar (\Mod(\mb{Z})^{flat},\otimes)$.

3. There is a natural monoidal transformation $\phi'\colon \Sym'(-)\ar \mb{Z}\langle-\rangle'$ of symmetric monoidal functors 
	$(\Ab',\oplus)\ar (\Mod(\mb{Z}),\otimes)$.
\end{lemma}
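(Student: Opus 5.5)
Part 1 is essentially the universal property of the free commutative monoid. For a based finite set $(X,\pt)$, the inclusion $X\ar |\mb{Z}\langle X/\pt\rangle|$, $x\mapsto x-\pt$, is a based map into an abelian group (so in particular into a commutative monoid). Because $\SP^{\infty}(X,\pt)$ is the free commutative monoid on $(X,\pt)$, this map extends uniquely to a monoid homomorphism $\SP^{\infty}X\ar |\mb{Z}\langle X/\pt\rangle|$, given by $[x_1,\dots,x_k]\mapsto \sum_i (x_i-\pt)$. Uniqueness of the extension gives naturality in based maps of $\Fin_*$, including maps that send points to $\pt$. Both functors carry $X\vee Y$ to the product of the values on $X$ and $Y$. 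The comparison map sends $(\alpha,\beta)\mapsto\alpha+\beta$, and the product decomposition of a free monoid (resp. free group) on a wedge is compatible with this. Hence the transformation is monoidal.

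For part 2, identify $S_+\in\Fin_*$ with $\mb{Z}^{\oplus S}\in \Ab'$ with basis $e_s$.
\begin{itemize}
\item The map $A_n$ sends a monomial $e_{s_1}\cdots e_{s_k}$ ($k\le n$) to the basis element $[s_1,\dots,s_k,\pt,\dots,\pt]\in \SP^n(S_+)$.
\item Points of $\SP^n(S_+)$ are exactly multisets of size $\le n$ in $S$ (padded by $\pt$), so $A_n$ is a bijection of bases and hence an isomorphism of free modules.
\item Naturality in $\Ab'$-morphisms comes from the description of $f_*$: a morphism $f_*$ sends $e_s\mapsto e_{f(s)}$ or $0$. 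On monomials, a factor $e_s$ with $f(s)=\pt$ kills the monomial, while on the $\SP$ side that point is sent to $\pt$. These disagree (zero versus a lower-degree monomial).
\end{itemize}
So I would refine the definition: $\Sym'_{\le n}$ should be matched with the \emph{associated graded} pieces, or one takes $A$ on all of $\Sym$ sending $e_{s_1}\cdots e_{s_k}$ to the multiset $\{s_1,\dots,s_k\}$. Then a morphism with $f(s)=\pt$ acts on $\Sym$ by killing the monomial, which corresponds to $\mb{Z}\langle\SP^\infty\rangle$ only after passing to the reduced part.

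To resolve this I would check the convention carefully. I expect the intended statement uses $\mb{Z}\langle-\rangle$ of the based set modulo $\pt$, or that $\Sym$ of $\mb{Z}^S$ is compared with the monoid ring $\mb{Z}[\SP^\infty(S_+)]$, where $\pt$ is the unit. Indeed $\mb{Z}[\text{free comm. monoid on } S]=\Sym(\mb{Z}^S)$, and sending a generator to the unit corresponds to $e_s\mapsto 1$, not $0$. Settling this convention is the main obstacle, and I would fix it so that $A$ is the canonical identification of a free commutative monoid ring with a polynomial ring. Monoidality then follows since monoid rings take products of monoids to tensor products.

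Similarly, $B$ is the identification of the group ring of the free abelian group $|\mb{Z}\langle S_+/\pt\rangle|=\mb{Z}^S$ with $\mb{Z}\langle\mb{Z}^S\rangle$, which is tautological, natural and monoidal.

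Part 3 then follows by composing: $\phi'=B^{-1}\circ\mb{Z}\langle\text{part 1}\rangle\circ A$. Linearizing a monoidal transformation of functors to $(\Set_*,\times)$ gives a monoidal transformation of functors to $(\Mod(\mb{Z}),\otimes)$, since $\mb{Z}\langle-\rangle\colon(\Set,\times)\ar(\Mod,\otimes)$ is strong monoidal. Concretely, $\phi'$ sends a monomial $e_{s_1}\cdots e_{s_k}$ to the group element $[\sum_i e_{s_i}]$.
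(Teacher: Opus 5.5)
\textbf{Verdict: there is a genuine gap, in part 2 and therefore also in part 3.} Your part 1 and the isomorphism $B$ are fine and match the paper.

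\textbf{Where it fails.} You correctly observe that the basis bijection $e_{s_1}\cdots e_{s_k}\mapsto[s_1,\dots,s_k,\pt,\dots,\pt]$ is not natural for morphisms of $\Ab'$ that send some $s$ to $\pt$. You then treat this as a matter of convention and settle on the canonical identification $e_s\mapsto[s]$ of $\Sym(\mb{Z}^S)$ with the monoid ring $\mb{Z}\langle\SP^{\infty}(S_+)\rangle$. But that is the same non-natural map: as you note yourself, it sends a collapsed generator to the unit rather than to $0$.

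So no natural $A$ or $A_n$ is actually constructed. The statement also needs no change of convention: $\Sym'$ is the restriction of $\Sym$ along $I'$, and collapsing maps act on it by $e_s\mapsto 0$.

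Consequently your $\phi'$, given by $e_{s_1}\cdots e_{s_k}\mapsto[\sum_i e_{s_i}]$, is not a natural transformation on $\Ab'$. For the collapse $\mb{Z}\ar 0$, the left side sends $e_s$ to $0$, while the right side sends $[e_s]$ to $[0]=1$.

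These collapsing morphisms cannot be discarded. They include the augmentation, and they occur as face maps of $\mb{Z}\langle X/\pt\rangle$. That simplicial object is exactly where $\phi'$ must be applied levelwise in Theorem~\ref{thrm:LF_LG} and the proof of Theorem~\ref{thrm:dpt}.

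\textbf{The missing idea: an affine change of generators.} Define $A$ as the algebra map with $A(e_s)=[s]-[\pt]$, where $[\pt]$ is the unit of $\mb{Z}\langle\SP^{\infty}(S_+)\rangle$.
\begin{itemize}
\item \emph{Isomorphism.} Write $x_s=[s]$, so that $\mb{Z}\langle\SP^{\infty}(S_+)\rangle=\mb{Z}[x_s]_{s\in S}$. The substitution $x_s\mapsto x_s-1$ is invertible, so $A$ is an algebra isomorphism.
\item \emph{Naturality.} A based map $f$ acts on the monoid ring by $x_s\mapsto x_{f(s)}$, or by $x_s\mapsto 1$ if $f(s)=\pt$. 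Hence $x_s-1\mapsto x_{f(s)}-1$ or $0$, which matches $f_*e_s$. Both sides are algebra maps, so checking generators suffices.
\item \emph{Monoidality.} This is checked on generators in the same way.
\item \emph{Filtration.} The product $\prod_{i\le k}(x_{s_i}-1)$ equals $x_{s_1}\cdots x_{s_k}$ plus lower-degree monomials with integer coefficients. This change of basis is unitriangular, so $A(\Sym_{\le n})$ is the span of monomials of degree $\le n$, which is $\mb{Z}\langle\SP^n(S_+)\rangle$. This gives $A_n$.
\end{itemize}
With this $A$, the composite you propose in part 3 becomes $e_{s_1}\cdots e_{s_k}\mapsto\prod_i([e_{s_i}]-1)$, so $\phi'(e_s)=[e_s]-1$. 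This map is natural, and it is the formula the paper later uses to show that $\phi'$ is a morphism of bialgebras.
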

\begin{proof}
1. Since $\SP^{\infty}$ is the free commutative monoid functor, by universality 
	it admits a natural transformation to
	the free commutative group functor $|\mb{Z}\langle -/\pt\rangle|$.

2. Given $S_+=S\sqcup \pt\in \Fin_*$ and $s\in S$, denote the corresponding 
	generator of the algebra $\Sym(\mb{Z}^S)$ by $e_s$ and the corresponding element in $\mb{Z}\langle \SP^{\infty}(X,\pt)\rangle$ by $s$.
Since  $\mb{Z}\langle \SP^{\infty}(X,\pt)\rangle$ is a commutative algebra with the unit $\pt$, 
there is a unique map of algebras
$$A\colon \Sym(\mb{Z}^S)\ar \mb{Z}\langle \SP^{\infty}(S_+,\pt)\rangle$$
determined by $A(e_s)=s-\pt$ for each $s\in S$. 
It is straighforward to check that this induces an isomorphisms $\Sym'(\mb{Z}^S)\ar \mb{Z}\langle \SP^{\infty}(S_+,\pt)\rangle$,
which is functorial and symmetric monoidal in the based set $S_+$.
Clearly $A$ restricted to $\Sym'_{\le n}$ gives the required isomorphism $A_n$.
The isomorphism $B\colon \mb{Z}\langle \mb{Z}^S\rangle\ar \mb{Z}\langle |\mb{Z}\langle S_+/\pt\rangle|\rangle$
follows from the identification $\mb{Z}^S\simeq \mb{Z}\langle S_+/\pt\rangle$.

3. Since $\Ab'=\Fin_*$, we obtain monoidal transformation $\Sym'\ar \mb{Z}\langle -\rangle'$.
\end{proof}

It follows that for a connected space $(X,\pt)\in \simpl{\Fin}_*$, one gets an isomorphism
\begin{equation}\label{eq:sp_chains}
	C_*(\SP^{n}(X,\pt))\simeq \mb{L}\Sym_{\le n}(\tilde{C}_*(X))\in D(\Ab),
\end{equation}
hence for any coefficients ring $R$, the homology $H_*(\SP^{\infty}(X,\pt);R)$ is a functor of the reduced chains $\tilde{C}_*(X)=C_*(X,\pt;R)\in D(\Mod(R))$. 
For instance one obtains a natural isomorphism of Pontryagin rings
$$H_*(\SP^{\infty}(X,\pt);R)\simeq \oplus_i H_*(\SP^n X,\SP^{n-1} X;R)\simeq \oplus_i H_*((X,\pt)^{\wedge n}/\Sigma_n;R),$$ 
and all these terms functorially depend on the reduced homology $H_*(X,\pt;R)$, once
an equivalence
$$\bigoplus_i H_i(X,\pt;R)[i]\simeq C_*(X,\pt)\otimes R\in D(\Mod(R)),$$ 
inducing the identity on homology,
is fixed \cite{Dold}.
Notice that the set of such equivalences is a torsor over $\prod_i \Ext^1_R(H_i(X;R),H_{i+1}(X;R))$.
In general
$H_*((X,\pt)^{\wedge n}/\Sigma_n)$
is not a functor of the abelian group $H_*(X,\pt)$ alone.

We formulate the Dold-Thom theorem as follows.
\begin{thrm}\label{thrm:Dold_Thom}
	For any connected $(X,\pt)\in \simpl{\Fin}_*$,
	the monoidal transformation
	$$\mb{L}\Sym'\ar \mb{L}\mb{Z}\langle -\rangle',$$
	evaluated on the reduced chain complex $I'(X,\pt)=N(\mb{Z}\langle X/\pt\rangle)$,
	is a quasi-isomorphism.
\end{thrm}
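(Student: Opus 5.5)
The plan is to reduce the statement to the classical Dold--Thom theorem via the key lemma \ref{lemma:key} and Lemma \ref{lemma:sym_sp}. The object $E=\mb{Z}\langle X/\pt\rangle$ lies in $\simpl{\Ab'}$ and is cofibrant in $\simpl{\Ab}$. By \ref{lemma:key}, evaluating $\mb{L}\Sym_s\circ i'\circ p'$ on $E$ gives $\Sym'(E)$ levelwise, naturally and monoidally, and the same holds for $\mb{Z}\langle-\rangle$. So the map in question is represented in $\Ho(\simpl{\Ab})$ by the levelwise map $\phi'(E)\colon \Sym'(E)\ar \mb{Z}\langle E\rangle'$ of Lemma \ref{lemma:sym_sp}(3). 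After applying $N$, it suffices to show that this map of simplicial abelian groups is a weak equivalence.

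Next I use the isomorphisms $A$ and $B$ of Lemma \ref{lemma:sym_sp}(2), applied degreewise to the simplicial based finite set $X$. They identify $\Sym'(E)\simeq \mb{Z}\langle \SP^\infty(X,\pt)\rangle$ and $\mb{Z}\langle E\rangle'\simeq \mb{Z}\langle|\mb{Z}\langle X/\pt\rangle|\rangle$, the free abelian group on the underlying simplicial set of the free simplicial abelian group on $(X,\pt)$. I will check that under these identifications $\phi'$ becomes $\mb{Z}\langle-\rangle$ applied to the universal monoid map $\SP^\infty(X,\pt)\ar \mb{Z}\langle X/\pt\rangle$ of Lemma \ref{lemma:sym_sp}(1), i.e.\ group completion. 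Both transformations are monoidal and agree on the generator $e_s\mapsto s-\pt$, so naturality in $\Fin_*$ should suffice. The claim thus reduces to showing that $\mb{Z}\langle-\rangle$ applied to $\SP^\infty X\ar \mb{Z}\langle X/\pt\rangle$ induces an isomorphism on homology.

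For the last step I invoke the Dold--Thom theorem. For connected $X$, the monoid $\SP^\infty X$ is connected, so $\pi_0$ is already a group, and the group completion map $\SP^\infty X\ar \mb{Z}\langle X/\pt\rangle$ is a weak homotopy equivalence of simplicial sets. Both sides have $\pi_*=\tilde H_*(X)$, and the map is compatible with these identifications, since Dold--Thom for $\mb{Z}\langle X/\pt\rangle$ is just the Dold--Kan/Hurewicz statement $\pi_*\mb{Z}\langle X/\pt\rangle=\tilde H_*(X)$. To keep this within simplicial sets, I first compare geometric realizations, using that $|\SP^\infty X|\simeq \SP^\infty|X|$ for finite simplicial sets, and then apply the topological Dold--Thom theorem. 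The functor $\mb{Z}\langle-\rangle$ (free simplicial abelian group, or equivalently singular chains after realization) preserves weak equivalences between simplicial sets, so $N$ of the induced map is a quasi-isomorphism.

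I expect the main obstacle to be the comparison step: identifying $\phi'$, transported by $A$ and $B$, with $\mb{Z}\langle\cdot\rangle$ of the group completion map, strictly enough that it holds levelwise and naturally in the simplicial direction. A secondary point is justifying Dold--Thom for simplicial $\SP^\infty$ rather than the topological one, which needs the realization comparison. Connectedness of $X$ enters only to ensure $\SP^\infty X$ is group-like; without it $\pi_0$ would be $\mb{N}^{\pi_0}$ and the map would fail to be an equivalence.
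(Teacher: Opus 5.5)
Your proposal is correct and takes the route the paper intends: the paper gives no separate argument, and treats the statement as the classical Dold--Thom theorem transported through the key lemma \ref{lemma:key} and the identifications $A$, $B$ of Lemma \ref{lemma:sym_sp}, exactly as you do. The comparison step you flag as the main obstacle is definitional in the paper: $\phi'$ in part 3 of Lemma \ref{lemma:sym_sp} is obtained by transporting $\mb{Z}\langle\gamma\rangle$ along $A$ and $B$, where $\gamma\colon \SP^{\infty}\to|\mb{Z}\langle -/\pt\rangle|$ is the map of part 1, so the identification holds by construction, degreewise and naturally in $\Fin_*$.
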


\begin{thrm}\label{thrm:dpt}
	There is an isomorphism 
	$$\DP\colon \mb{L}\Sym\ar[\sim] \mb{L}\mb{Z}\langle-\rangle,$$
of strong symmetric monoidal functors 
	$(D^{<-1}_{perf}(\Ab),\oplus)\ar (D(\Mod(\mb{Z})),\otimes)$.
\end{thrm}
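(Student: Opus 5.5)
We obtain $\DP$ by applying Theorem \ref{thrm:LF_LG} (the connective version, stated on $D^{<-1}_{perf}(\Ab)$) to the monoidal transformation $\phi'\colon \Sym'\ar \mb{Z}\langle -\rangle'$ of Lemma \ref{lemma:sym_sp}(3). Both $\Sym$ and $\mb{Z}\langle-\rangle$ are exponential functors $(\Ab^{free,fg},\oplus)\ar(\Mod(\mb{Z})^{flat},\otimes)$, and $\phi'$ is a monoidal morphism of their restrictions to $\Ab'$. Theorem \ref{thrm:LF_LG} then produces a natural monoidal morphism
$$\DP:=\phi\colon \mb{L}\Sym\lar \mb{L}\mb{Z}\langle-\rangle$$
of strong symmetric monoidal functors $(D^{<-1}_{perf}(\Ab),\oplus)\ar(D(\Mod(\mb{Z})),\otimes)$. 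Naturality and monoidality are built into that theorem. So the only remaining task is to show that $\DP$ is an isomorphism objectwise.

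To check this, fix $P\in D^{<-1}_{perf}(\Ab)$. By Theorem \ref{thrm:good_s_dold_kan} and Corollary \ref{cor:moore}, $K'(P)$ is represented by $\mb{Z}\langle X/\pt\rangle\in\simpl{\Ab'}$, where $X=\MooreSp(P)$ is a finite simplicial generalized Moore space, which is $1$-connected. Unwinding the definition of $\phi$ in the proof of Theorem \ref{thrm:LF_LG} via Lemma \ref{lemma:key}, the component $\phi(P)$ is identified, up to the canonical isomorphisms $\mb{L}F_s\circ i'\circ p'\simeq F'$, with
$$N\big(\phi'(\mb{Z}\langle X/\pt\rangle)\big)\colon N\big(\Sym'(\mb{Z}\langle X/\pt\rangle)\big)\lar N\big(\mb{Z}\langle \mb{Z}\langle X/\pt\rangle\rangle\big).$$
This is exactly the monoidal transformation $\mb{L}\Sym'\ar\mb{L}\mb{Z}\langle-\rangle'$ evaluated on the reduced chains $I'(X,\pt)$. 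Since $X$ is connected, the Dold--Thom Theorem \ref{thrm:Dold_Thom} says this map is a quasi-isomorphism. Hence $\phi(P)$ is an isomorphism in $D(\Mod(\mb{Z}))$, and $\DP$ is a monoidal natural isomorphism.

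The main obstacle is the identification step in the previous paragraph. We must check that the abstractly defined $\phi(P)$, which passes through the inverse equivalence $K'$ and the isomorphisms of Lemma \ref{lemma:key}, agrees with the concrete chain-level map to which Theorem \ref{thrm:Dold_Thom} applies. This should follow from naturality of the isomorphism $\mb{L}F_s\circ i'\circ p'\simeq F'$ with respect to $\phi'$. That naturality in turn holds because both sides are computed by applying the functor directly to the cofibrant object $\mb{Z}\langle X/\pt\rangle$, with no replacement. One further point: being an isomorphism is invariant under the choice of Moore space representative, since any two choices are homotopy equivalent (Whitehead's theorem).
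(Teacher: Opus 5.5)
Your proposal is correct and follows the paper's proof: you apply Theorem \ref{thrm:LF_LG} to the monoidal transformation $\phi'\colon \Sym'\to\mb{Z}\langle-\rangle'$ of Lemma \ref{lemma:sym_sp}, then invoke the Dold--Thom Theorem \ref{thrm:Dold_Thom} to conclude that the resulting monoidal transformation $\phi$ is an isomorphism. The paper leaves implicit the identification of $\phi(P)$ with the chain-level map on a Moore-space representative $\mb{Z}\langle X/\pt\rangle$ of $K'(P)$, and your justification of that step via the naturality in Lemma \ref{lemma:key} is the right one.
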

\begin{proof}
	The morphism $\phi'\colon \Sym'\ar \mb{Z}\langle -\rangle'$ provided by Lemma \ref{lemma:sym_sp},
	according to Theorem \ref{thrm:LF_LG},
	give rise to a monoidal transformation $\phi\colon \mb{L}\Sym\ar \mb{L}\mb{Z}\langle -\rangle$
	of symmetric monoidal functors from $(D^{<-1}_{perf}(\Ab),\oplus)$ to $(D(\Mod(\mb{Z})),\otimes)$.
	By Dold-Thom theorem $\phi$ is an isomorphism.
	Let $\DP$ to be equal $\phi$.
\end{proof}
This gives a functorial version of the Dold-Puppe-Thom isomorphism \cite[Satz 4.16]{DP}. Namely
by \ref{prop:chains_em} there is a monoidal isomorphism
\begin{equation}\label{eq:em_chains}
	C^{sing}_*(\EM(-))\simeq \mb{L}\Sym(-),
\end{equation}
of strong symmetric monoidal functors from $(D^{<-1}_{perf}(\Ab),\oplus)$ to $(D(\Mod(\mb{Z})),\otimes)$.
\begin{cor}\label{cor:dtp}
	For any coefficient ring $R$, 
the Dold-Puppe-Thom isomorphism $$H_*(K(A,n);R)\simeq \mb{L}_*\Sym(A[n])\os{L}{\otimes} R$$ is functorial in the abelian group $A\in \Ab^{fg}$ for $n>1$.
\end{cor}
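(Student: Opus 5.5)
The proof composes two natural monoidal isomorphisms already available and then passes to homology. First, Proposition \ref{prop:chains_em} gives a natural monoidal isomorphism $C^{sing}_*(\EM(P))\simeq \mb{L}\mb{Z}\langle P\rangle$ for $P\in D^{<0}_{perf}(\Ab)$. Second, Theorem \ref{thrm:dpt} gives the natural monoidal isomorphism $\DP\colon \mb{L}\Sym(P)\simeq \mb{L}\mb{Z}\langle P\rangle$ on $D^{<-1}_{perf}(\Ab)$. Composing them yields \eqref{eq:em_chains}, a natural isomorphism $C^{sing}_*(\EM(P))\simeq \mb{L}\Sym(P)$ in $D(\Mod(\mb{Z}))$, natural in $P\in D^{<-1}_{perf}(\Ab)$.

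Next I restrict along the functor $\Ab^{fg}\ar D^{<-1}_{perf}(\Ab)$, $A\mapsto A[n]$. This lands in $D^{<-1}_{perf}$ precisely when $n>1$, since $A[n]$ has homology in degree $-n<-1$ in the cohomological convention. A morphism $u\colon A\ar A'$ then gives $u[n]$ in the derived category, and naturality of \eqref{eq:em_chains} gives a commutative square relating $C^{sing}_*(\EM(A[n]))$ and $\mb{L}\Sym(A[n])$. Since $\EM(A[n])$ is naturally homotopy equivalent to $K(A,n)$, and this equivalence is natural in $A$ because both sides are built functorially from $K(A[n])$, the left side computes $C^{sing}_*(K(A,n))$ functorially.

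For the coefficient ring $R$, I apply the exact functor $-\os{L}{\otimes} R\colon D(\Mod(\mb{Z}))\ar D(\Mod(R))$ to the natural isomorphism and take cohomology (equivalently homology $H_*$ after regrading). The singular chains are free, so $C^{sing}_*(K(A,n))\os{L}{\otimes}R = C^{sing}_*(K(A,n);R)$, whose homology is $H_*(K(A,n);R)$. On the other side I get $H_*(\mb{L}\Sym(A[n])\os{L}{\otimes}R)$, which is what $\mb{L}_*\Sym(A[n])\os{L}{\otimes}R$ denotes. Taking homology is functorial, so the resulting isomorphism is natural in $A$. Monoidality is preserved as well, giving multiplicativity: these are Pontryagin rings, using the Künneth/Eilenberg--Zilber identification for $\EM(P\oplus Q)=\EM(P)\times\EM(Q)$.

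The main obstacle is already absorbed into Theorem \ref{thrm:dpt}: the naturality of $\DP$ on $D^{<-1}_{perf}$ rests on Theorem \ref{thrm:good_s_dold_kan}, which requires $n>1$ through Proposition \ref{prop:maps_of_moore}. What remains here is a bookkeeping check that the isomorphism obtained agrees with the classical Dold--Puppe map, so that the corollary is about "the" $\DP$. I would verify this by comparing both on a chosen Moore space model $X=\MooreSp(A,n)$. There both are induced by $\SP^{\infty}X\ar|\mb{Z}\langle X/\pt\rangle|$ via Lemma \ref{lemma:sym_sp} and Theorem \ref{thrm:Dold_Thom}, so they coincide for each individual $A$.
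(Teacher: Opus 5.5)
Your proposal is correct and follows the paper's route. The corollary is read off from \eqref{eq:em_chains}: compose Proposition~\ref{prop:chains_em} with Theorem~\ref{thrm:dpt}, restrict along $A\mapsto A[n]$ for $n>1$, apply $-\os{L}{\otimes}R$ and take homology. Your extra check that this agrees with the classical Dold--Puppe map on a Moore space model is not spelled out in the paper, which only remarks that the original construction also uses Moore spaces, but it is consistent with how $\DP$ is built.
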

It follows that there is a natural grading 
$$H_*(K(A,n);R)\simeq \oplus_d H_*(K(A,n);R)^d$$ with pieces 
$H_*(K(A,n);R)^d\simeq \mb{L}_*\Sym^d(A[n])\os{L}{\otimes} R.$
Moreover this functorial  grading is with respect to the Pontryagin product in $H_*(K(A,n);R)$.

\begin{rmrk}
	The construction of $\DP$ in \cite[Satz 4.16]{DP} also involves Moore space, the above result shows that the construction
	is functorial on the level of homotopy categories.
	As pointed out in the introduction, it fails for $n=1$ and torison group $A$ \cite{Mikhailov}.
	In the case $A\in \Ab^{free,fg}$ the statement was established in \cite{Touze}.

Let us mention that there is no generalization of the splitting, provided by the Dold-Puppe-Thom isomorphism, to the level of $\infty$-categorical localization of 
	chain complexes:
	there are examples of toric fibrations such that the corresponding Lerray-Serre spectral sequence has non-trivial differentials at $E_2$, see e.g. \cite{Totaro}.
	On the other hand, a similar construction can be used to establish a version of the isomorphism in the case of diagrams of free (shifted) abelian groups 
	provided with
	a lift to a diagram of free associative groups \cite[section 4]{GZ}.
	One can say more if certain primes in the coefficients are invertible \cite{Solomadin}.
\end{rmrk}

\section{Binomial algebras}\label{sect:bin}
\begin{dfn}
	For $V\in \Ab^{free,fg}$,
	the free algebra of divided powers $\Gamma(V)\subset \Sym(V)\otimes_{\mb{Z}}\mb{Q}$
	is the algebra generated by expressions $v^{[n]}:=\frac{v^n}{n!}$ for all $n\ge 0$, $v\in V$.
\end{dfn}
\begin{dfn}
	For $V\in \Ab^{free,fg}$
	The free binomial algebra $\Bin(V)$ is the algebra of all
	integer-valued polynomial functions on $V^\vee$.
\end{dfn}
By naturality this defines a strong symmetric monoidal functor
$\Bin(-)\colon (\Ab^{free,fg},\oplus)\ar (\Ab^{flat},\otimes)$.
For example $\Bin(\mb{Z})$ is the $\mb{Z}$-span of the binomials $\binom{x}{n}\in \mb{Q}[x]$ of degree $n$.
The comultiplication is given by:
$$\Delta\binom{x}{n}=\binom{x_1+x_2}{n}=\sum\limits_{i+j=n} \binom{x_1}{i}\binom{x_2}{j}.$$
It follows that the map $\binom{x}{n}\to x^{[n]}$ gives an isomorphism of 
coalgebras $\Bin(\mb{Z})\simeq \Gamma(\mb{Z})$.
We denote by $\Bin^{\le k}(-)$ and $\Gamma^{\le k}(-)$ the multiplicative filtrations corresponding to 
the degree of polynomial functions.
Let $\Gr \Bin(-)$ denotes the functor obtained by passing to the associated graded with respect to the degree filtration.
Using \ref{prop:groups_as_functors} it easy to see the following.
\begin{prop}
	There are isomorphisms of strong symmetric monoidal functors:
	$$\Gr \Bin\simeq \Gamma\colon (\Ab^{free,fg},\oplus)\lar (\Mod(\mb{Z})^{flat},\otimes),$$
	and
	$$\Bin''\simeq \Gamma''\colon (\Ab'',\oplus)\lar (\Mod(\mb{Z})^{flat},\otimes).$$
\end{prop}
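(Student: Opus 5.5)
We prove both isomorphisms through Proposition \ref{prop:groups_as_functors}. That result says a strong symmetric monoidal functor out of $(\Ab^{free,fg},\oplus)$, respectively out of $(\Ab'',\oplus)$, is determined up to monoidal isomorphism by its value on $\mb{Z}$: a co/commutative Hopf algebra in the first case, an augmented cocommutative coalgebra in the second. So it is enough to compare Hopf algebras for the first statement and augmented coalgebras for the second.

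\textbf{Second isomorphism.} We first check that $\Bin''$ and $\Gamma''$ are strong symmetric monoidal with values in flat modules.
\begin{enumerate*}
\item The restriction of a strong symmetric monoidal functor along the strong symmetric monoidal embedding $I''$ is again strong symmetric monoidal.
\item $\Bin(V)$ and $\Gamma(V)$ are free abelian groups, with bases the products of binomials and of divided powers in a basis of $V$.
\end{enumerate*}
By Proposition \ref{prop:groups_as_functors}(3), it then suffices to give an isomorphism of augmented cocommutative coalgebras $\Bin(\mb{Z})\simeq\Gamma(\mb{Z})$. We use $\binom{x}{n}\mapsto x^{[n]}$, which sends a $\mb{Z}$-basis to a $\mb{Z}$-basis.
\begin{enumerate*}
\item It respects comultiplication, by the Vandermonde identity $\binom{x_1+x_2}{n}=\sum_{i+j=n}\binom{x_1}{i}\binom{x_2}{j}$ and $\Delta x^{[n]}=\sum_{i+j=n}x^{[i]}\otimes x^{[j]}$.
\item It respects the counit, since both counits are evaluation at $0$ (induced by $\mb{Z}\to0$) and pick out the degree-zero coefficient.
\item It respects the augmentation $\kk\to F(\mb{Z})$ induced by $0\to\mb{Z}$, which is the unit $1$ on both sides.
\end{enumerate*}
The multiplications do differ, for instance $\binom{x}{1}^2=2\binom{x}{2}+\binom{x}{1}$ while $(x^{[1]})^2=2x^{[2]}$. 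This is harmless, because Proposition \ref{prop:groups_as_functors}(3) never uses them.

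\textbf{First isomorphism.} The degree filtration is multiplicative, since the degree of a product of polynomial functions is at most the sum of degrees. It is also compatible with the coproduct, since $\Delta$ preserves total degree. Hence $\Gr\Bin(\mb{Z})$ is a graded commutative cocommutative Hopf algebra.

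To see that $\Gr\Bin$ is again strong monoidal, we need $\Gr\Bin(V\oplus W)\simeq\Gr\Bin(V)\otimes\Gr\Bin(W)$. This holds because the filtration on $\Bin(V\oplus W)=\Bin(V)\otimes\Bin(W)$ is the tensor product filtration. The filtration is split by the basis of products of binomials, so passing to the associated graded commutes with $\otimes$, and $\Gr\Bin$ still lands in flat modules.

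On $\Gr\Bin(\mb{Z})$ we compute:
\begin{enumerate*}
\item the coproduct is unchanged, since it is homogeneous;
\item the leading term of $\binom{x}{n}$ is $x^n/n!$, so the symbols $\sigma_n$ of the $\binom{x}{n}$ multiply by $\sigma_i\sigma_j=\binom{i+j}{i}\sigma_{i+j}$.
\end{enumerate*}
These are exactly the divided power relations $x^{[i]}x^{[j]}=\binom{i+j}{i}x^{[i+j]}$. Therefore $\sigma_n\mapsto x^{[n]}$ is a Hopf algebra isomorphism $\Gr\Bin(\mb{Z})\simeq\Gamma(\mb{Z})$, and Proposition \ref{prop:groups_as_functors}(1) gives $\Gr\Bin\simeq\Gamma$.

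\textbf{Main obstacle.} The only delicate point is the strong monoidality of $\Gr\Bin$, that is, that $\Gr$ commutes with the tensor product of these filtered lattices. It reduces to the filtrations being split with free graded pieces, which follows from the explicit binomial bases.
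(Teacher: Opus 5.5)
Your proposal is correct and follows the paper's route. The paper also invokes Proposition \ref{prop:groups_as_functors} to reduce to the value at $\mb{Z}$, using the coalgebra isomorphism $\binom{x}{n}\mapsto x^{[n]}$ for $\Bin''\simeq\Gamma''$ and the associated graded Hopf algebra for $\Gr\Bin\simeq\Gamma$; you supply the details it leaves as ``easy to see'', namely the split tensor-product filtration and the leading-term identity $\sigma_i\sigma_j=\binom{i+j}{i}\sigma_{i+j}$.
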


Apart from the functorial \emph{increasing} filtration 
$$\Bin^{\le n}(\mb{Z}^S)=\mb{Z}\langle\prod\limits_{s\in S} \binom{e_s}{n_s}\mid \sum\limits_{s\in S} n_s\le n\rangle,$$ we have 
the \emph{decreasing} filtration $\IBin''_{>n}\subset \Bin''$ defined as follows:
$$\IBin''_{>n}(\mb{Z}^S)=\mb{Z}\langle\prod\limits_{s\in S} \binom{e_s}{n_s}\mid \sum\limits_{s\in S} n_s>n\rangle.$$
This filtration is not functorial in general: the map $f\colon \mb{Z}^{S_1}\ar \mb{Z}^{S_2}$
induces a map $\IBin''_{>n}(\mb{Z}^{S_1})\ar \IBin''_{>n}(\mb{Z}^{S_2})$ only if $f\in \Ab''$.

Let $\BiAlg$ denotes the category of co/commutative bialgebras in $(\Mod(\mb{Z}),\otimes)$.
We start by noting that the isomorphism in part 3 of Lemma \ref{lemma:sym_sp} is a morphism of bialgebras.
\begin{dfn}\label{dfn:bin_dual}
Define the bialgebra $\Sym'(\mb{Z})$ as follows:
	\begin{enumerate}
		\item The multiplication is given by the identification  $\Sym'(\mb{Z})=\mb{Z}[t]$.
		\item The comultiplication is given by $\Delta t=t_1+t_2+t_1t_2=t\otimes 1+1\otimes t+t\otimes t$.
		\item The unit and the augmentation ideal are given by $1\in \mb{Z}[t]$ and $t\mb{Z}[t]$ respectively.
	\end{enumerate}
\end{dfn}
In other words $\Spec \Sym'(\mb{Z})=(\mb{A}^1,\times)$ is a monoid scheme with coordinate $t$ at $1\in \mb{A}^1$ 
(the bialgebra  $\Sym'(\mb{Z})$ lacks an antipod).
We extend the construction to the functor 
$$\Sym'\colon \Ab'\ar \BiAlg,$$
by $\Sym'(\mb{Z}^S)=\otimes_{s\in S}\Sym'(\mb{Z})$.
Similarly, we have group algebra
functor $\mb{Z}\langle -\rangle\colon \Ab\ar \BiAlg$.
\begin{lemma}
	The natural transformation $\phi'\colon \Sym'\ar \mb{Z}\langle-\rangle'$ in part 3 of \ref{lemma:sym_sp}, is
	a morphism of functorial bialgebras.
\end{lemma}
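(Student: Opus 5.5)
Since the statement only asks that $\phi'_{\mb{Z}^S}\colon \Sym'(\mb{Z}^S)\ar \mb{Z}\langle \mb{Z}^S\rangle$ respect both structures, and both sides are tensor products over $S$ of the corresponding objects for $\mb{Z}$, with $\phi'$ monoidal, I will reduce everything to the single object $\mb{Z}$. On $\mb{Z}^S=\mb{Z}\langle S_+/\pt\rangle$ the bialgebra structure of Definition \ref{dfn:bin_dual} is by construction the tensor product of the structures on the factors. On the other side, $\mb{Z}\langle \mb{Z}^S\rangle\simeq \mb{Z}\langle\mb{Z}\rangle^{\otimes S}$ as group bialgebras. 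The monoidality statement in Lemma \ref{lemma:sym_sp} (part 3) identifies $\phi'_{\mb{Z}^S}$ with $\bigotimes_S \phi'_{\mb{Z}}$. So it suffices to show that $\phi'_{\mb{Z}}\colon \mb{Z}[t]\ar \mb{Z}\langle\mb{Z}\rangle=\mb{Z}[x^{\pm1}]$ is a bialgebra map.

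First I will make $\phi'_{\mb{Z}}$ explicit. Following part 2 of Lemma \ref{lemma:sym_sp}, $A(e)=s-\pt$. Under $\SP^\infty(\{s\}_+)\ar|\mb{Z}\langle\{s\}_+/\pt\rangle|=\mb{Z}$, the point $s$ goes to the generator $1$ and $\pt$ goes to $0$. Hence, writing $x$ for the group-like element $[1]\in\mb{Z}\langle\mb{Z}\rangle$, we get $\phi'(t)=x-1$. The map is an algebra map by construction: $A$ and $B$ are algebra isomorphisms and part 1 of Lemma \ref{lemma:sym_sp} is induced by a monoid map, so $\mb{Z}\langle-\rangle$ applied to it is multiplicative. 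Thus $\phi'$ is determined by $t\mapsto x-1$.

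Then I check the coalgebra structure on the generator. In $\mb{Z}\langle\mb{Z}\rangle$ we have $\Delta x=x\otimes x$, so
$$\Delta(x-1)=x\otimes x-1\otimes 1=(x-1)\otimes 1+1\otimes(x-1)+(x-1)\otimes(x-1),$$
which is exactly the image of $\Delta t=t\otimes1+1\otimes t+t\otimes t$. The counit gives $\varepsilon(x-1)=0=\varepsilon(t)$, since $t$ spans the augmentation ideal. Both $\Delta$ and $\varepsilon$ are algebra maps on both sides, and $\phi'$ is an algebra map, so compatibility on the generator $t$ gives compatibility on all of $\mb{Z}[t]$.

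The only step needing care is confirming that the bialgebra structure of Definition \ref{dfn:bin_dual} is the one intended, namely the one induced functorially by $\Delta\colon\mb{Z}\ar\mb{Z}^2$. That map does not lie in $\Ab'$, so "functorial bialgebra" must be read through the definition, not through the functor. Given the explicit definition, the generator computation above settles the claim.
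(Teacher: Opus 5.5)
Your argument is correct and is essentially the paper's proof: both identify $\phi'(e_s)=v_s-1$ and check that $\Delta(v_s-1)=v_s\otimes v_s-1\otimes 1=(v_s-1)\otimes(v_s-1)+(v_s-1)\otimes 1+1\otimes(v_s-1)$ matches $\Delta e_s$; your reduction to a single tensor factor, the counit check, and the explicit extension from generators are harmless additions. One small correction to your closing remark: the comultiplication of Definition \ref{dfn:bin_dual} is not the one $\Sym(\Delta)$ would induce (that one makes $t$ primitive, and $\phi'$ would then fail to be a coalgebra map because $x-1$ is not primitive), so that definition is genuinely required rather than being just one way of reading the statement.
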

\begin{proof}
	For a moment, denote a generator in $\Sym'(\mb{Z}^S)$, corresponding to $s\in S$,  by $e_s\in S$ and the corresponding element in $\mb{Z}^S$ by $v_s$.
	By the construction of $\phi'_S\colon \Sym'(\mb{Z}^S)\ar \mb{Z}\langle \mb{Z}^S\rangle$, $\phi'_S(e_s)=v_s-1$, 
	where $1$ is the unit of the group algebra $\mb{Z}\langle \mb{Z}^S\rangle$ corresponding to $0\in \mb{Z}^S$.
	By the definition of the comultiplication in the group algebra $\mb{Z}\langle \mb{Z}^S\rangle$ we have 
	$$\Delta(v_s-1)=v_s\otimes v_s-1\otimes 1=(v_s-1)\otimes (v_s-1)+(v_s-1)\otimes 1 + 1\otimes (v_s-1).$$
	On the $\Sym'$'side, the comultiplication is given by $\Delta(e_s)=e_s\otimes e_s+e_s\otimes 1+1\otimes e_s$.
	Hence $\phi'_S$ commutes with $\Delta$ and is a morphism of bialgebras.
\end{proof}

Consider the restriction $\Bin''\colon \Ab''\ar \BiAlg$.
Since $\Bin''(\mb{Z}^S)\in \BiAlg$ has a preferred unordered basis given by binomials $\prod_{s\in S} \binom{e_{s}}{n_s}$,
one can define the dual $\Bin''(\mb{Z}^S)^\vee$ of the same cardinality as $\Bin''(\mb{Z}^S)$.
Set $\Bin''^{\vee}(V):=(\Bin''(V^\vee))^\vee$, this defines a functorial bialgebra
$\Bin''^{\vee}\colon \Ab'\ar \BiAlg$.
\begin{prop}\label{prop:bin_dual_sym}
	We have an isomorphism $\Bin''^\vee\simeq \Sym'$ of functors
	$\Ab'\ar \BiAlg$.
Under this isomorphism we have ${\IBin''_{>n}}^{\bot} \simeq \Sym'_{\le n}$
	and ${\Bin''_{\le n}}^{\bot}\simeq \Sym'_{>n}$.
\end{prop}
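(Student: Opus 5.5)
The plan is to build an explicit perfect pairing
$$\langle -,-\rangle\colon \Sym'(\mb{Z}^S)\otimes \Bin''((\mb{Z}^S)^\vee)\lar \mb{Z}$$
that is natural in $S_+\in\Fin_*$ and exchanges multiplication with comultiplication. The pairing will be given by evaluating integer-valued polynomials at lattice points, which makes all the bialgebra compatibilities automatic. Write $\Sym'(\mb{Z}^S)=\mb{Z}[t_s\mid s\in S]$ and set $T_s:=1+t_s$. By Definition \ref{dfn:bin_dual}, $\Delta T_s=T_s\otimes T_s$, so each monomial $T^a=\prod_s T_s^{a_s}$ with $a\in\mb{N}^S$ is group-like. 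Let $x_s$ be the coordinates on the dual lattice, so that $\Bin''((\mb{Z}^S)^\vee)$ consists of integer-valued polynomials $f(x)$ on $\mb{Z}^S$. I define
$$\langle p(T), f\rangle := \big(p(\tau)f\big)(0),$$
where $\tau_s$ is the shift operator $x_s\mapsto x_s+1$. In particular $\langle T^a,f\rangle=f(a)$, and $t^m=\prod_s(\tau_s-1)^{m_s}$ acts as an iterated finite difference.

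The key steps are as follows.

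\textbf{Step 1: perfectness.} The pairing is perfect because of the classical identity $(\tau-1)^m\binom{x}{n}\big|_{x=0}=\delta_{mn}$. It follows that the monomial basis $\prod_s t_s^{m_s}$ and the binomial basis $\prod_s\binom{x_s}{n_s}$ are dual bases. This identifies $\Sym'(\mb{Z}^S)$ with the basis-dual $\Bin''(\mb{Z}^S)^\vee$ as abelian groups.

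\textbf{Step 2: bialgebra compatibility.} It suffices to check compatibility on the group-like spanning set $T^a$, since these span $\Sym'\otimes\mb{Q}$ and everything is torsion-free.
\begin{itemize}
\item Products in $\Bin''$ are dual to the comultiplication on $\Sym'$, because $\langle T^a,fg\rangle=f(a)g(a)=\langle T^a\otimes T^a,f\otimes g\rangle$.
\item The comultiplication on $\Bin''$, $f(x)\mapsto f(x_1+x_2)$ (coming from $\Delta\colon\mb{Z}\to\mb{Z}^2$ in $\Ab''$), is dual to multiplication on $\Sym'$, because $T^aT^b=T^{a+b}$.
\item Units and counits correspond, since $\langle 1,f\rangle=f(0)$ is the augmentation of $\Bin$.
\end{itemize}

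\textbf{Step 3: naturality.} A morphism of $\Ab'$ is given by a map $\varphi\colon S_1\to S_2\sqcup\{\pt\}$. On $\Sym'$ it acts by $t_s\mapsto t_{\varphi(s)}$, or $t_s\mapsto 0$ when $\varphi(s)=\pt$; equivalently $T_s\mapsto T_{\varphi(s)}$ or $T_s\mapsto 1$. On the other side, $\Bin''$ acts on the transposed morphism $(\mb{Z}^{S_2})^\vee\to(\mb{Z}^{S_1})^\vee$ of $\Ab''$ by pulling back functions along the lattice map $\varphi_*\colon\mb{Z}^{S_1}\to\mb{Z}^{S_2}$, which sends $e_s\mapsto e_{\varphi(s)}$ or $0$. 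Thus
$$\langle T^a,\varphi^*f\rangle=f(\varphi_*a)=\langle \varphi_*(T^a),f\rangle,$$
which is exactly the adjointness needed for $\Sym'\simeq \Bin''^\vee$ to be a natural transformation. I expect this step to be the main obstacle. The point to get right is the bookkeeping of variances: $\Ab''$ versus $\Ab'$, the dualization $V\mapsto V^\vee$, and the basis-dual of $\Bin''$. I would settle it by checking the two generating types of morphisms (collapsing to $\pt$, and merging two points) directly on $T^a$.

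\textbf{Step 4: filtrations.} Under the dual-bases description of Step 1, $t^m$ pairs nontrivially only with $\binom{x}{m}$, whose total degree is $|m|$. The annihilator of $\IBin''_{>n}$ is therefore spanned by the $t^m$ with $|m|\le n$, which is $\Sym'_{\le n}$. Likewise the annihilator of $\Bin''_{\le n}$ is $\Sym'_{>n}$ (the span of the $t^m$ with $|m|>n$).
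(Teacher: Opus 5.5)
Your proposal is correct and follows essentially the paper's argument: the paper also makes $t^m$ dual to $\binom{x}{m}$ via the generating function $u(x,t)=(1+t)^x$, and its two formal identities are exactly your group-like checks $\langle T^a,fg\rangle=f(a)g(a)$ and $\langle T^aT^b,f\rangle=f(a+b)$, with the exponent specialized to lattice points. The only difference is that you spell out the multivariable case and naturality along morphisms of $\Ab'$ (your Step 3), which the paper leaves implicit after treating $\mb{Z}$ alone.
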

\begin{proof}
	We shall show an isomorphism of bialgebras $\Bin''(\mb{Z})^\vee\simeq \Sym'(\mb{Z})$.
	If we have a bialgebra $E$ with basis $e_i$, let $f^i$ denote its duals in $F:=E^\vee$.
	Set $u(e,f):=\sum e_i f^i$. It is convenient to pack 
	the multiplication $m$ and the comultiplication $\Delta$ of bialgebras $F$ and $E$ 
	into the formal indentities
	$$\Delta_e u(e,f)=m_f(u(e',f),u(e'',f))$$ and 
	$$m_e(u(e,f'),u(e,f''))=\Delta_f u(e,f),$$
	where $f'$ denotes $f\otimes 1$, $\Delta_e$ denotes the application of the comultiplication in $E$ involving only $e$-variables, and so on.
	Our case corresponds to the formal function
	$$u(x,t):=(1+t)^x=\sum\limits_n \binom{x}{n}t^n,$$
	i.e. we define the linear isomorphism $\Bin''(\mb{Z})\simeq \Sym'(\mb{Z})^{\vee}$ in terms of the basis 
	by requiring that $t^n\in \Sym'(\mb{Z})$ is dual to $\binom{x}{n}\in \Bin(\mb{Z})$.
	The isomorphism commutes with the multiplication and the comultiplication, thanks to the identities 
	$$\Delta_x (1+t)^{x}=m_t((1+t)^{x'}, (1+t)^{x''})$$ and
	$$m_x((1+t')^{x},(1+t'')^{x})=\Delta_t(1+t)^x.$$
	Since $\IBin''_{>n}(\mb{Z})$ is spanned by $\binom{x}{i},i>n$, 
	which are duals of $t^i$,
	we have $\IBin_{>n}(\mb{Z})^\bot\subset \Sym'(\mb{Z})^{\vee}$, the module of functionals vanishing on $\IBin_{>n}(\mb{Z})$, 
	is spanned by $t^j,j\le n$.
	Similarly $\Bin_{\le n}(\mb{Z})^{\bot}\subset \Sym'(\mb{Z})^{\vee}$ is spanned by $t^j,j>n$.
	
\end{proof}
In particular the product in $\Bin$ restricts to a morphism 
$$\IBin''_{>n}\otimes \IBin''_{>m}\ar \IBin''_{>\max(n,m)},$$
in fact $\IBin''_{>m}$ is a non-multiplicative decreasing filtration.
Since $\IBin''_{\ge 0}=\Bin''$, $\IBin''_{>n}$ is an ideal in $\Bin''$ for each $n$.

The completion of the monoid scheme $(\mb{A}^1,\times)$ at $1\in \mb{A}^1$ (i.e. at the ideal $(t)$), 
is equivalent to the completion of $\mb{G}_m$ at $1\in \mb{G}_m$.
Define $\widehat{\Sym'}$ as the power-series version of \ref{dfn:bin_dual}, then
$\wh{\mb{G}}_m\simeq \Spf \widehat{\Sym'}(\mb{Z}).$
It follows that the Cartier dual of the commutative group scheme $\Spec\Bin(\mb{Z})$ 
is the formal group scheme $\wh{\mb{G}}_m$ \cite{Toen}.

\section{Cochains of Eilenberg-MacLane spaces}\label{sect:cochains_em}
Binomial algebra is an algebra over the monad $\Bin\colon \Ab^{flat}\ar \Ab^{flat}$ and is obtained from our definition in a standard manner.
In other words it is commutative algebra with operations correspondings to binomials, i.e. its a lambda-ring such that the correponding Adams operations are identity 
\cite{Elliot}.
The basic example is given by the integer-valued functions on a set, in particular
the cochains of a simplical set is a cosimplicial binomial algebra.
The relation of derived binomial algebras to topology is well-known, 
for the general discussion we refer to \cite{Ekedahl},\cite{Toen},\cite{Horel},\cite{KSZ} and \cite{Suciu}.
Here we will discuss the free derived binomial algebra functor
$$\mb{L}\Bin\colon (D^{<-1}_{perf}(\Ab),\oplus)\ar (D(\Mod(\mb{Z})),\otimes),$$
and its relation to symmetric powers of spaces from an elementary point of view.
For a connected $(X,\pt)\in \simpl{\Fin}_*$
denote the reduced cochains of $(X,\pt)$
by $\mb{Z}^{(X,\pt)}=\mb{Z}\langle X/\pt\rangle^{\vee}\in \cosimpl{\Ab''}$. 
Using \ref{prop:bin_dual_sym}, the dualization of \eqref{eq:sp_chains} immediately gives
\begin{prop}\label{prop:bin_ideals}
	For connected $(X,\pt)\in \simpl{\Fin}_*$ 
	there is an isomorphism
	$$C^*(\SP^{\infty}X,\SP^n X)\simeq \IBin''_{>n}(\mb{Z}^{(\SP^n X,\pt)}),$$
	inducing an isomorphism
	$$C^*(\SP^n X)\simeq \Bin''/\IBin''_{>n}(\mb{Z}^{(\SP^n X,\pt)}),$$
	of monoids in $D(\Mod(\mb{Z}))$, both are functorial in $X$.
\end{prop}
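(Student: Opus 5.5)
The plan is to dualize the chain-level identification \eqref{eq:sp_chains} degreewise, using Proposition \ref{prop:bin_dual_sym}. Let $X\in\simpl{\Fin}_*$ be connected and $E=\mb{Z}\langle X/\pt\rangle\in\simpl{\Ab'}$. By Lemma \ref{lemma:sym_sp}(2), in each simplicial degree $k$ there is a natural isomorphism
$$A_n\colon \Sym'_{\le n}(E_k)\simeq \mb{Z}\langle \SP^n(X_k)\rangle,$$
compatible with the simplicial structure maps because $A$ is natural on $\Ab'\simeq\Fin_*$. Hence $\Sym'_{\le n}(E)\simeq \mb{Z}\langle\SP^n X\rangle$ as simplicial abelian groups, and similarly $\Sym'(E)\simeq\mb{Z}\langle\SP^\infty X\rangle$. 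Both sides carry preferred bases: monomials on the left, points of $\SP^n X_k$ on the right. I will check that $A_n$ carries one basis to the other up to a unitriangular change of basis, so that degreewise $\mb{Z}$-duals are well defined and commute with the structure maps. The resulting quotient $\Sym'(E)/\Sym'_{\le n}(E)\simeq \mb{Z}\langle\SP^\infty X\rangle/\mb{Z}\langle \SP^nX\rangle$ is then the chain model of the pair $(\SP^\infty X,\SP^nX)$.

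Next I dualize. The reduced cochains are $\mb{Z}^{(X,\pt)}=E^\vee\in\cosimpl{\Ab''}$. By Proposition \ref{prop:bin_dual_sym}, applied degreewise and naturally in $\Ab'$, we have $\Sym'(E_k)^\vee\simeq \Bin''(E_k^\vee)$. Under this isomorphism the annihilator of $\Sym'_{\le n}$ is $\IBin''_{>n}$, and the dual of $\Sym'_{\le n}$ is the quotient $\Bin''/\IBin''_{>n}$. Applying the duality degreewise (for infinite-rank pieces I take the graded/restricted dual in the binomial basis), I get isomorphisms of cosimplicial abelian groups
$$\mb{Z}^{\SP^\infty X}\supset \mb{Z}^{(\SP^\infty X,\SP^n X)}\simeq \IBin''_{>n}(E^\vee),\qquad \mb{Z}^{\SP^nX}\simeq \Bin''/\IBin''_{>n}(E^\vee).$$
Applying $N$ then gives the stated identifications with $C^*(\SP^\infty X,\SP^nX)$ and $C^*(\SP^nX)$. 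Naturality in $X$ holds because every map of based simplicial sets yields a morphism in $\simpl{\Ab'}$, and Proposition \ref{prop:bin_dual_sym} is an isomorphism of functors on $\Ab'$. I read the argument $\mb{Z}^{(\SP^n X,\pt)}$ in the statement as $\mb{Z}^{(X,\pt)}$ with the $\SP^n$ accounted for by the filtration; if the statement genuinely means something else, only the bookkeeping here changes.

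Finally, the monoid structures. On cochains of a simplicial set the cup product is pointwise multiplication of functions followed by $\AW$. Proposition \ref{prop:bin_dual_sym} is an isomorphism of bialgebras, so the comultiplication $\Delta t=t_1+t_2+t_1t_2$ on $\Sym'$ is dual to the product on $\Bin''$. Under $A$, this comultiplication corresponds to the diagonal of $\SP^\infty X_k$, which is the Lemma stated just before Definition \ref{dfn:bin_dual}, transported through $B$. Thus degreewise multiplication of functions matches the product in $\Bin''$, and since $\IBin''_{>n}$ is an ideal, the product descends to the quotient. Composing with $\AW$ on both sides gives isomorphisms of monoids in $D(\Mod(\mb{Z}))$.

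I expect the main obstacle to be the basis compatibility. The map $A$ sends $e_s\mapsto s-\pt$, so it is not a basis-to-basis map, and dualizing requires showing that the dual filtration is exactly the binomial one. The first thing I would try is to verify, for $S_+$ a finite based set, that the transpose of $A$ sends the indicator function of the point $\sum n_s s\in\SP^\infty S_+$ to $\prod_s\binom{x_s}{n_s}$ up to unitriangular corrections. This is precisely the content of the identity $(1+t)^x=\sum_n\binom{x}{n}t^n$ used in Proposition \ref{prop:bin_dual_sym}.
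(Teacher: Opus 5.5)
Your route is the paper's own: dualize \eqref{eq:sp_chains} degreewise via Proposition \ref{prop:bin_dual_sym}, with multiplicativity coming from the bialgebra identification. The truncated statement $C^*(\SP^nX)\simeq\Bin''/\IBin''_{>n}$ works as you describe, since everything has finite rank in each degree.

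\textbf{The gap is in the first isomorphism.} The cosimplicial group $\mb{Z}^{(\SP^\infty X,\SP^nX)}$ consists of \emph{all} functions on the infinite sets $\SP^\infty X_k\setminus\SP^nX_k$. That is the full dual $\prod_{d>n}\Sym^d(E_k)^\vee$. By contrast, $\IBin''_{>n}(E_k^\vee)$ is the restricted dual $\bigoplus_{d>n}\Sym^d(E_k)^\vee$, the integer-valued polynomials vanishing on $\SP^nX_k$. As soon as $E_k\neq 0$ the first is uncountable and the second countable, so the degreewise isomorphism you assert does not exist. Saying you ``take the restricted dual'' changes the object; it does not identify it with cochains.

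The check you propose at the end fails for the same reason. For one generator, $(1+t)^x=\sum_j\binom{x}{j}t^j$ shows that the indicator of the point $m$ corresponds to $\sum_{j\ge m}(-1)^{j-m}\binom{j}{m}\binom{x}{j}$, and $\binom{x}{m}$ corresponds to $\sum_{j\ge m}\binom{j}{m}\delta_j$. So the triangular corrections are infinite in both directions.

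Note that you never use that $X$ is connected, and without it the claim is false. For $X=S^0$, $\SP^\infty X=\mb{N}$ is discrete, and in degree $0$ one is comparing polynomial functions with all functions on $\mb{N}$.

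\textbf{What is needed is a quasi-isomorphism.} This suffices, since the statement lives in $D(\Mod(\mb{Z}))$.
\begin{enumerate}
\item Morphisms of $\Ab'$ send generators to generators or $0$, so $\Sym'$ is functorially graded. Hence $A$ identifies $\mb{Z}\langle\SP^\infty X\rangle/\mb{Z}\langle\SP^nX\rangle$ with $\bigoplus_{d>n}\Sym^d(E)$ as simplicial groups.
\item Consider evaluation of $\Bin''(E_k^\vee)$, the integer-valued polynomials on $E_k$, at the points of $\SP^\infty X_k\cong\mb{N}^{S_k}\subset E_k$. This is a map of cosimplicial rings, hence compatible with products. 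Under the grading it becomes the inclusion $\bigoplus_{d>n}N(\Sym^dE)^\vee\hookrightarrow\prod_{d>n}N(\Sym^dE)^\vee$.
\item Because $X$ is connected, $N(E)\simeq\Sigma M$ with $M$ connective. By d\'ecalage, $N(\Sym^dE)\simeq\mb{L}\Sym^d(\Sigma M)\simeq\Sigma^d\mb{L}\Lambda^d(M)$ has homology only in degrees $\ge d$. By universal coefficients, its dual has no cohomology below degree $d$.
\item So in each cohomological degree only finitely many $d$ contribute, and the sum and the product have the same cohomology. The inclusion is therefore a quasi-isomorphism of (non-unital) cosimplicial rings.
\end{enumerate}
Combined with your finite-rank argument for the quotient, this gives both isomorphisms as monoids.
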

Passing to the limit over $n$ one can show an equivalence
$$C^*(\SP^{\infty} X)\simeq \Bin(\mb{Z}^{(\SP^{\infty} X,\pt)})$$
By Dold-Thom theorem $\SP^{\infty} X$ is homotopy equivalent to $\EM(N(\mb{Z}\langle X/\pt\rangle))$.
Thus there is, \emph{a priori} non canonical, isomorphism of monoids 
$$C^*(\EM(P))\simeq \mb{L}\Bin(P^{\vee})\in D(\Mod(\mb{Z})),$$
for $P\in D^{<-1}_{perf}(\Ab)$.
In fact, the latter can be constructed in an invariant manner.
\begin{prop}
	There is a natural monoidal isomorphism
	$$C^*(\EM(-))\simeq \mb{L}\Bin(-^{\vee}),$$
	of strong symmetric monoidal functors $(D^{<-1}_{perf}(\Ab),\oplus)^{\op}\ar (D(\Mod(\mb{Z})),\otimes)$.
\end{prop}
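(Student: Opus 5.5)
The idea is to dualize the proof of Theorem \ref{thrm:dpt} and Proposition \ref{prop:chains_em}, working on the cosimplicial side with the category $\Ab''$.

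First I would establish the cochain analogue of Proposition \ref{prop:chains_em}. For $P\in \Ch^{<-1}(\Ab^{free,fg})$, $\EM(P)=|K(P)|$ has an integral cochain complex computed by the cosimplicial abelian group $\mb{Z}^{K(P)}$ of integer-valued functions on the simplicial set $K(P)$. I would write this as $\mb{Z}^{K(P)}=\mb{F}(K(P)^\vee)$, where $\mb{F}(V)$ for $V\in\Ab^{free,fg}$ denotes the bialgebra of all integer-valued functions on the lattice $V^\vee$ (the linear dual of the group algebra). Then $\mb{F}\colon(\Ab^{free,fg},\oplus)\to(\Ab,\otimes)$ is strong monoidal on finitely generated pieces, after passing to filtered colimits over finite subsets and using finiteness of $K(P)$ in each degree. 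The cup product corresponds to the $\AW$ product on $N(\mb{F}(K(P)^\vee))$, which gives a natural monoidal isomorphism $C^*(\EM(P))\simeq N\mb{F}(K(P)^\vee)$. Since $K(P)^\vee\simeq K''(P^\vee)$ by Corollary \ref{thrm:good_cs_dold_kan}, the contravariance in $P$ is built in.

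Second, I would produce a monoidal transformation $\Bin''\to \mb{F}''$ of functors $(\Ab'',\oplus)\to(\Mod(\mb{Z}),\otimes)$. It is the inclusion of integer-valued \emph{polynomial} functions on $V^\vee$ into all integer-valued functions. This inclusion is dual to the transformation $\phi'\colon\Sym'\to\mb{Z}\langle-\rangle'$ of Lemma \ref{lemma:sym_sp}, under Proposition \ref{prop:bin_dual_sym} ($\Bin''^\vee\simeq\Sym'$). In fact the inclusion is natural and monoidal on all of $\Ab^{free,fg}$, but only the $\Ab''$ part is needed. Applying Theorem \ref{thrm:LF_LG_cs}, together with Lemma \ref{lemma:key_cs} to identify $\mb{L}\mb{F}$ via $K''$, gives a natural monoidal morphism $\mb{L}\Bin(P^\vee)\to \mb{L}\mb{F}(P^\vee)\simeq C^*(\EM(P))$. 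Here $\mb{F}$ is only flat in the limit, so I would treat it as a filtered colimit of the flat truncations $\Bin^{\le n}$-type pieces, or simply define its derived functor by the same formula $N\mb{F}K$.

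Third, I need this morphism to be an isomorphism, and I expect this step to be the main obstacle. I would check it on a representative where $K''(P^\vee)=\mb{Z}^{(X,\pt)}$ for a Moore space $X=\MooreSp(P)$. There the map becomes $\Bin(\mb{Z}^{(X,\pt)})\to \mb{Z}^{\mb{Z}\langle X/\pt\rangle}=C^*(|\mb{Z}\langle X/\pt\rangle|)$, which factors through Proposition \ref{prop:bin_ideals} as the limit over $n$ of $C^*(\SP^n X)$ composed with the dual of the Dold-Thom map $\SP^\infty X\to|\mb{Z}\langle X/\pt\rangle|$ (Lemma \ref{lemma:sym_sp}(1)). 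Dold-Thom (Theorem \ref{thrm:Dold_Thom}) makes the chain-level map a quasi-isomorphism. The difficulty is passing from homology to cochains, since both sides are inverse limits and dualization of non-finitely-generated complexes can introduce $\lim^1$ and $\Ext$ terms. I would use degreewise finiteness: $\mb{L}\Sym_{\le n}$ and $C_*(\SP^nX)$ are degreewise finite, and the filtration $\IBin''_{>n}$ is concentrated in cohomological degrees $\ge 2(n+1)$ by 1-connectivity, so the tower is Mittag-Leffler and degreewise eventually constant. The map is therefore the $\mb{Z}$-dual of a quasi-isomorphism of degreewise finite free complexes, hence a quasi-isomorphism.

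Finally, monoidality follows from Theorem \ref{thrm:LF_LG_cs}, and naturality in $P$ follows because all choices of Moore spaces are absorbed into $K''$.
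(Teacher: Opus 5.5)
Your comparison map is the same as the paper's. The morphism of binomial algebras $\Bin(K(P^{\vee}))\to\mb{Z}^{|K(P)|}$ extending the linear functions is exactly your levelwise inclusion $\iota$ of integer-valued polynomial functions on the lattice $K(P)_k$ into all integer-valued functions. The two routes differ at the quasi-isomorphism: the paper does not prove it but cites Ekedahl, To\"en and KSZ, while you verify it on a Moore-space model.

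Your verification works as follows. Let $j\colon\SP^\infty X\to\mb{Z}\langle X/\pt\rangle$ be the Dold--Thom map. Then $j^*$ is a quasi-isomorphism, being the $\mb{Z}$-dual of the quasi-isomorphism of Theorem \ref{thrm:Dold_Thom} between bounded-below complexes of free groups, so no $\lim^1$ or $\Ext$ issue arises there. By Proposition \ref{prop:bin_ideals}, the composite $j^*\circ\iota$ is the completion map $\Bin(\mb{Z}^{(X,\pt)})\to\lim_n \Bin''/\IBin''_{>n}(\mb{Z}^{(X,\pt)})$, and two-out-of-three finishes. State it this way: $\iota$ does not itself factor through $\lim_n C^*(\SP^n X)$. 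This upgrades the paper's ``a priori non canonical'' argument just before the proposition into a proof that the canonical map is an equivalence, using only tools already in the paper. The price is that it depends on a 1-reduced finite Moore model, which the cited results do not need.

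Two justifications must be replaced.

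\textbf{Monoidality.} $\mb{F}$ is not strong monoidal. $K(P)_k$ is a nonzero lattice, hence an infinite set, and $\mb{Z}^S\otimes\mb{Z}^T\to\mb{Z}^{S\times T}$ is not surjective for infinite $S,T$. Nor is $\mb{F}$ a filtered colimit of $\Bin^{\le n}$-type pieces, since that colimit is $\Bin$. Flatness is not the issue, as $\prod\mb{Z}$ is torsion-free. So the monoidal clause of Theorem \ref{thrm:LF_LG_cs} cannot be invoked.

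It is also not needed. As you note, $\Bin\subset\mb{F}$ is natural on all of $\Ab^{free,fg}$ and compatible on the nose with external products of functions. It therefore directly gives a map of cosimplicial rings $\Bin(K(P)^\vee)\to\mb{Z}^{K(P)}$, natural in $P$ without Moore spaces or $K''$, and compatibility with $P\oplus Q$ holds levelwise. The structure maps of $C^*(\EM(-))$ are the K\"unneth maps, which are quasi-isomorphisms because $\EM(P)$ has finitely generated homology in each degree. Moore spaces enter only when checking the quasi-isomorphism property. That property transfers along a cosimplicial homotopy equivalence $K(P)^\vee\simeq\mb{Z}^{(X,\pt)}$, since any functor preserves cosimplicial homotopies.

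\textbf{Connectivity.} $N\IBin''_{>n}(\mb{Z}^{(X,\pt)})$ is not concentrated in degrees $\ge 2(n+1)$ as a complex; only its cohomology is. For example, with $X=\Delta^2/\partial\Delta^2$ and $n=1$ the complex is nonzero in degree $2$. The cohomological statement suffices. Under $\Bin''\simeq\Gamma''$ one has $\IBin''_{>n}\simeq\bigoplus_{d>n}(\Gamma^d)''$, and the cohomology of $N\Gamma^d(\mb{Z}^{(X,\pt)})$ lies in degrees $\ge 2d$. Hence the completion map $\bigoplus_d N\Gamma^d(\mb{Z}^{(X,\pt)})\to\prod_d N\Gamma^d(\mb{Z}^{(X,\pt)})$ is a quasi-isomorphism.
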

\begin{proof}
	Let us sketch a proof.
	Given $P\in D^{<-1}_{perf}(\Ab)$, represent it by a complex of free groups $P\in \Ch^{<0}(\Ab^{free,fg})$.
	Consider the Eilenberg-MacLane space $\EM(P)=|K(P)|$.
	Since $\mb{Z}^{|K(P)|}$ is a (cosimplicial) binomial algebra, the inclusion of linear functions $K(P^{\vee})\ar \mb{Z}^{|K(P)|}$ 
	extends to a morphism of binomial algebras $\Bin(K(P^{\vee}))\ar \mb{Z}^{|K(P)|}$.
	This morphism is a quasi-isomorphism, for details see \cite{Ekedahl},\cite{Toen},\cite{KSZ}.
\end{proof}

On the other hand, using the isomorphism $\Sym^{\vee}\simeq \Gamma$, the dual form of \eqref{eq:em_chains} says that
there is a natural isomorphism
$$C^*(\EM(-))\simeq \mb{L}\Gamma(-^{\vee}),$$
of strong symmetric monoidal functors $(D^{<-1}_{perf}(\Ab),\oplus)^{\op}\ar (D(\Mod(\mb{Z})),\otimes)$.
In particular $H^*(K(A,n))\simeq \mb{L}^*\Gamma(A[-n])$ for $n>1$ admits canonical splitting.

The above considerations imply a natural isomorphism 
$$\mb{L}\Bin\simeq \mb{L}\Gamma,$$
of strong symmetric monoidal functors $(D^{<-1}_{perf}(\Ab)^{\vee},\oplus)\ar (D(\Mod(\kk)),\otimes)$.
This identification is explained also by Theorem \ref{thrm:LF_LG_cs}: the isomorphism 
$\phi''\colon \Bin''\simeq \Gamma''$ can be lifted to the isomorphism of the left derived functors as above.

We conclude by the following remark.
Given connected $(X,\pt)\in \simpl{\Fin}_*$, 
we have a natural map of topological monoids $\SP^{\infty}(X,\pt)\ar |\mb{Z}\langle X/\pt\rangle|$,
which is an equivalence by Dold-Thom theorem.
Assume $A_*$ is a chain complex with trivial differential and $f\colon A_*\simeq C_*(X,\pt)\in D(\Ab)$
is an isomorphism.
By the above we obtain a homotopy equivalence $\EM(A_*)\simeq \SP^{\infty}(X,\pt)$.
If $A_*$ consists of free abelian groups the equivalence doesn't depend on any choices, hence
any connected based space $(X,\pt)$ with prescribed homology groups equal to $A$, provides an equivalence $F\colon \SP^{\infty}(X,\pt)\ar[\sim] \EM(A_*)$.
Then $F$ induces a split filtration on $H_*(\EM(A_*))$, which is multiplicative with respect to the Pontragin product.
	The corresponding filtration on $H^*(K(A_*))$ is \emph{not} multiplicative in general.
	Moreover, the filtration provides a non-trivial functorial invariant of the based homotopy type $(X,\pt)$ with 
	prescribed homology groups: $\tilde{H}_*(X)=A_*$.
	
	An example is provided by considering the spaces $X_1=S^2\vee S^4$ and $X_2=\mb{C}P^2$ with equal homology groups.
	Let $y_2,y_4$ be generators of $H^*(X_1;\mb{Z})$ and $t$ is the generator of $H^2(X_2;\mb{Z})$.
	Let $E=K(\mb{Z}[2]\oplus \mb{Z}[4])\simeq \mb{C}P^2\times K(\mb{Z},4)$.
	Pick the fundamental classes $x_2,x_4\in H^*(E;\mb{Z})$.
	By Dold-Thom theorem we have maps $f_i\colon X_i=\SP^1 X_i\ar K(\mb{Z}[2]\oplus \mb{Z}[4])$,
	which are determined by pull-backs $f_1^*x_2=y_2,f_1^*x_4=y_4$ and
	$f_2^*x_2=t,f_2^*x_4=t^2$.
	Denote by $F^{>n}_1,F^{>n}_2$ the decreasing filtrations on $H^*(E)$ corresponding to 
	$\IBin''_{>n}(\mb{Z}\langle X_i/\pt\rangle)$.
	Then $\mb{Q}\otimes F^{>1}_1$ is an ideal generated by $x_2^2,x_4^2,x_2x_4$, while
	$\mb{Q}\otimes F^{>1}_2$ is generated by $x_2^2=x_4,x_2^3$.
	Hence $F^{>-1}_1\neq F^{>-1}_2$.

\section{Applications}\label{sect:apps}
\subsection{Cohomology of iterated classifying stacks}
Given an affine commutative group scheme $G$ one can define
the so-called (derived) iterated classifying stacks $B^n G=B(B^{n-1}G)$.
Its cohomology with coefficients in the structure sheaf, as a Hopf algebra, can be defined by the formula
$$H^*(B^n G,\mc{O}_{B^n G}):=\mb{L}^*\mc{O}^{\otimes -}_G(\mb{Z}[-n]),$$
where the functor $\mc{O}^{\otimes -}_G$ is the monoidal functor corresponding to the Hopf algebra $\mc{O}_G$ \eqref{eq:grp_sch}.
Recall that a formal group law $G$ over $\mb{Z}$ corresponds to a Hopf algebra $\mc{O}(G)$ provided with an isomorphism
of commutative pro-algebras
$\mc{O}(G)\simeq \mb{Z}[t]]$.
The Cartier dual $G^{\vee}$ is an affine commutative group scheme 
provided with an isomorphism $\mc{O}(G^{\vee})\simeq\mc{O}(\mb{G}^{\vee}_a)=\Gamma(\mb{Z})$ of augmented coalgebras.
Theorem \ref{thrm:LF_LG} asserts then, that
the cohomology of $B^n G^\vee$, as a Hopf algebra, is isomorphic to $\mb{L}^*\Gamma(\mb{Z}[-n])$.

Let us recall the d\'ecalage isomorphism \cite[4.3.2.1]{Illusie}:
\begin{thrm}
	There is a chain of canonical multiplicative isomorphisms:
	$$\mathbb{L}^{i-2d} \Sym^d(A[-n])\simeq 
		\mathbb{L}^{i-d}\Lambda^d (A[-n-1])\simeq 
			\mathbb{L}^{i}\Gamma^d(A[-n-2])$$
\end{thrm}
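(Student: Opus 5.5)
The plan is to prove only the first isomorphism, $\mb{L}\Sym^d(P)\simeq \mb{L}\Lambda^d(P[-1])[-d]$ with $P=A[-n]$. The second one, $\mb{L}\Lambda^d(P)\simeq \mb{L}\Gamma^d(P[-1])[-d]$, should follow by the identical argument with the pair $(\Sym,\Lambda)$ replaced by $(\Lambda,\Gamma)$, that is, using the other Koszul complex
$$0\ar \Gamma^d V\ar \Gamma^{d-1}V\otimes \Lambda^1 V\ar\dots\ar \Lambda^d V\ar 0.$$
Throughout, $P$ is represented by a complex in $\Ch^{\ge 0}(\Ab^{free,fg})$. All functors are evaluated degreewise on $K(P)$, so only the exactness of Koszul complexes of free modules and the homotopy invariance of the functors $\Sym^j,\Lambda^j,\Gamma^j$ on cosimplicial free modules are used, as recalled before \eqref{eq:quillen_der_cs}.

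\textbf{Step 1 (a contractible cone).} Consider the short exact sequence $0\ar P[-1]\ar C\ar P\ar 0$ in which $C=\cof(\id_P)[-1]$ is contractible, with all terms degreewise free. Applying $K$ gives a degreewise split short exact sequence of cosimplicial free abelian groups $0\ar K(P[-1])\ar K(C)\ar K(P)\ar 0$ with $K(C)$ contractible. I would use the version of this sequence in which $K(C)$ is the cosimplicial path object, so that $K(P[-1])$ is its fibre.

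\textbf{Step 2 (the Koszul filtration).} For a degreewise split inclusion $U\subset V$ with quotient $W$, $\Sym^d V$ carries the Koszul filtration. I would instead use the exact Koszul complex associated with $U\ar V$,
$$0\ar \Lambda^d U\ar \Lambda^{d-1}U\otimes \Sym^1 V\ar\dots\ar \Sym^d V\ar \Sym^d W\ar 0,$$
which resolves $\Sym^d W$ by the terms $\Lambda^{j}U\otimes\Sym^{d-j}V$. Applying it degreewise to $U=K(P[-1])$, $V=K(C)$, $W=K(P)$ gives a double complex of cosimplicial modules. All columns except the $j=d$ one, $\Lambda^d K(P[-1])$, contain a factor $\Sym^{d-j}K(C)$ with $d-j\ge 1$, and these are contractible by homotopy invariance. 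By the Eilenberg--Zilber theorem this holds after tensoring with $N\Lambda^jK(P[-1])$ as well. Totalizing therefore gives
$$N\Sym^d K(P)\simeq N\Lambda^d K(P[-1])[-d].$$
The shift by $d$ is the length of the Koszul resolution, and taking cohomology in degree $i-2d$ on the left and $i-d$ on the right gives the claimed indices.

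\textbf{Step 3 (multiplicativity and canonicity).} Taking the direct sum over $d$ of the Koszul complexes gives a differential graded algebra, and the maps in Step 2 are compatible with the products $\Sym^a\otimes\Sym^b\ar\Sym^{a+b}$ and $\Lambda^a\otimes\Lambda^b\ar \Lambda^{a+b}$, up to the Koszul sign. Combining this with the Alexander--Whitney map $\AW$, I would conclude that the isomorphism respects the algebra structures on cohomology. Canonicity needs a separate check: replacing $C$ by another contractible extension changes the isomorphism only by a homotopy, since the space of such choices is contractible.

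I expect the main obstacle to be Step 2, and specifically the totalization argument. One has to check that the totalization of the Koszul double complex of cosimplicial objects really computes $N\Lambda^d K(P[-1])$ shifted by exactly $d$, with the correct signs, rather than some twisted version. One also has to make sure the argument uses nothing beyond degreewise exactness and the Eilenberg--Zilber/$\AW$ comparison. If the path-object version of $K(C)$ causes trouble, my fallback is to follow Illusie: first prove the statement for $P=V[-n]$ with $V$ free via the bar construction, and then extend to general $A$ by a two-term free resolution $[\mb{Z}^R\ar \mb{Z}^G]$ and the naturality of the construction.
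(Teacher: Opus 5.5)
Your proposal is correct and is essentially the Koszul-complex argument behind Illusie's d\'ecalage \cite[4.3.2.1]{Illusie}: take the contractible cone of $\id_P$, resolve by the Koszul complex, and observe that all columns but one are acyclic. The paper gives no proof of its own and simply cites that result; you run the argument directly on cosimplicial objects, which is the coconnective regime the paper actually uses.

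There are two small fixes. First, in the paper's cohomological convention Step 2 gives $N\Sym^d K(P)\simeq N\Lambda^d K(P[-1])[d]$, not $[-d]$; your own degree count, $i-2d$ on the left against $i-d$ on the right, is the one that holds for $[d]$. Second, for the second isomorphism you need the relative Koszul complex, with terms $\Gamma^{j}U\otimes\Lambda^{d-j}V$ resolving $\Lambda^d W$, rather than the absolute one you display.
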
\QEDB

In particular 
$\mathbb{L}^i\Gamma(A[-n-2])\simeq 
	\bigoplus\limits_{d}\mathbb{L}^{i-2d} \Sym^d (A[-n])$.
By our results 
$$H^i(K(\mathbb{Z},n+2);\mathbb{Z})\simeq 
	\mathbb{L}^i \Bin(\mathbb{Z}[-n-2])\simeq 
		\mathbb{L}^i \Gamma(\mathbb{Z}[-n]),$$
and we obtain:
\begin{cor}
	There is a chain of natural multiplicative isomorphisms:
	$$H^i(K(\mathbb{Z},n+2);\mathbb{Z})\simeq \bigoplus\limits_{d}\mathbb{L}^{i-2d}\Sym^d(\mathbb{Z}[-n])\simeq
	\bigoplus\limits_{d'}H^{d'}(B^{n}\mathbb{G}_{a},
		\mathcal{O}_{B^{n}\mathbb{G}_a})_{i-d'}$$
\end{cor}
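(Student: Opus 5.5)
The final corollary combines three isomorphisms, and the plan is to compose them, checking at each stage that the isomorphism is natural and multiplicative.

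\textbf{Step 1: topology to derived binomial powers.} Apply the isomorphism $C^*(\EM(-))\simeq \mb{L}\Bin(-^{\vee})$ of strong symmetric monoidal functors to $P=\mb{Z}[n+2]$, whose homology sits in degree $n+2\ge 3$. This gives a multiplicative identification
$$H^i(K(\mb{Z},n+2);\mb{Z})\simeq \mb{L}^i\Bin(\mb{Z}[-n-2]).$$
Alternatively one can use the dual of \eqref{eq:em_chains}, which gives $\mb{L}^i\Gamma(\mb{Z}[-n-2])$ directly.

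\textbf{Step 2: binomial to divided powers.} Use Theorem \ref{thrm:LF_LG_cs} with the isomorphism of augmented coalgebras $\Bin''\simeq\Gamma''$. This yields an isomorphism of Hopf algebras $\mb{L}\Bin(\mb{Z}[-n-2])\simeq \mb{L}\Gamma(\mb{Z}[-n-2])$, since $\mb{Z}[-n-2]$ lies in $D^{<-1}_{perf}(\Ab)^\vee$.

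\textbf{Step 3: d\'ecalage to symmetric powers.} Apply the d\'ecalage theorem degreewise in the weight $d$:
$$\mb{L}^i\Gamma^d(\mb{Z}[-n-2])\simeq \mb{L}^{i-2d}\Sym^d(\mb{Z}[-n]).$$
Summing over $d$ gives the first displayed isomorphism. The weight grading on $\mb{L}\Gamma$ comes from $\Gamma=\bigoplus_d\Gamma^d$ and is preserved by $\mb{L}$. Multiplicativity follows because d\'ecalage is compatible with products $\Gamma^d\otimes\Gamma^{d'}\to\Gamma^{d+d'}$, as Illusie states.

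\textbf{Step 4: symmetric powers to the classifying stack.} The exponential functor attached to $\mb{G}_a$ is $\mc{O}_{\mb{G}_a}^{\otimes -}=\Sym$, since $\mc{O}(\mb{G}_a)=\Sym(\mb{Z})$ as a Hopf algebra. By definition, then,
$$H^*(B^n\mb{G}_a,\mc{O})=\mb{L}^*\Sym(\mb{Z}[-n]).$$
Here the cohomological degree $d'$ and the weight must be matched: $H^{d'}(B^n\mb{G}_a,\mc{O})_w$ should denote the weight-$w$ piece in degree $d'$ of $\bigoplus_d\mb{L}^{*}\Sym^d(\mb{Z}[-n])$. The indices are to be matched as follows: the summand $\mb{L}^{i-2d}\Sym^d$ has degree $d'=i-2d$ and weight $d=(i-d')/2$. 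The statement's subscript $i-d'$ then equals $2d$, i.e. twice the weight, so I would read the grading as weight doubled (a convention to fix in the write-up).

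\textbf{Main obstacle.} The expected difficulty is naturality and multiplicativity across the change of models, and especially in Step 2. The isomorphism from Theorem \ref{thrm:LF_LG_cs} is only an isomorphism in $D(\Mod(\kk))$ of Hopf objects, and it need not respect the weight gradings of $\Bin$ (which only has a filtration) and $\Gamma$. To handle this, I would transport the weight grading along Step 2 from the $\Gamma$ side only, so that no compatibility with a grading on $\Bin$ is ever needed.
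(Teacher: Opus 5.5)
Your proposal is correct and follows the paper's own argument. The chain is the same: identify $H^*(K(\mathbb{Z},n+2);\mathbb{Z})$ with $\mathbb{L}^*\Bin(\mathbb{Z}[-n-2])$, pass to $\mathbb{L}^*\Gamma(\mathbb{Z}[-n-2])$ via Theorem \ref{thrm:LF_LG_cs} applied to $\Bin''\simeq\Gamma''$, apply Illusie's d\'ecalage weight by weight, and read off $H^*(B^n\mathbb{G}_a,\mathcal{O})=\mathbb{L}^*\Sym(\mathbb{Z}[-n])$ from the definition, with the subscript $i-d'$ equal to twice the weight. You correctly write $\mathbb{Z}[-n-2]$ where the paper's text has the typo $\mathbb{Z}[-n]$, and the results you invoke only require $\mathbb{Z}[n+2]\in D^{<-1}_{perf}(\Ab)$, i.e.\ $n\ge 0$, so your bound $n+2\ge 3$ is stronger than needed.
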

The description of both parts of this isomorphism occurs in \cite[A]{KP}, it served our main motivation for this work.

\subsection{Cohomology of symmetric powers of suspension spaces}\label{sect:top_app}
By Dold theorem, for a connected based space $(X,\pt)$ there is an isomorphism of groups 
\begin{equation}\label{dold_hom}
	H_*(\SP^n X;\mb{Z})\simeq \mb{L}_* \Sym_{\le n}(C_*(X,\pt;\mb{Z})),
\end{equation}
in particular the symmetric product homology is determined, up to an isomorphism, by 
the groups $H_*(X;\mb{Z})$.
It is natural to ask if the algebra $H^*(\SP^n X;\mb{Z})$ can be 
recovered from the algebra $H^*(X;\mb{Z})$. 
Quite remarkabely, Dmitry Gugnin proved the following result.
\begin{thrm}[\cite{Gugnin}]
	The algebra $H^*(\SP^n X;\mb{Z})/\Tors$ functorially 
	depends on the algebra $H^*(X;\mb{Z})/Tors$.
\end{thrm}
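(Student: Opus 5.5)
The plan is to recast Gugnin's theorem as a consequence of the derived-functor technology of this paper, in the rationalized/torsion-free setting. By the Dold isomorphism \eqref{dold_hom}, together with \eqref{eq:sp_chains} and its dual Proposition \ref{prop:bin_ideals}, the cochains satisfy
$$C^*(\SP^n X)\simeq \Bin''/\IBin''_{>n}(\mb{Z}^{(X,\pt)})$$
as monoids in $D(\Mod(\mb{Z}))$. The target is a filtered quotient of the binomial functor evaluated on the reduced cochains. The first step is to pass modulo torsion. Since the cup product on $H^*(X)/\Tors$ is detected after $\otimes\mb{Q}$, it suffices to show that the lattice $H^*(\SP^n X)/\Tors\subset H^*(\SP^n X;\mb{Q})$, together with its product, is determined functorially by the algebra $H^*(X)/\Tors\subset H^*(X;\mb{Q})$.

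Rationally, $\Bin\otimes\mb{Q}=\Sym\otimes\mb{Q}$ and $\mb{L}\Sym$ is formal. This gives
$$H^*(\SP^n X;\mb{Q})\simeq \bigl(H^*(X;\mb{Q})^{\otimes n}\bigr)^{\Sigma_n},$$
the classical Macdonald-type description. This identification is multiplicative: it is induced by the quotient map $X^n\to\SP^n X$, which is a rational cohomology isomorphism onto the invariants. It is therefore determined by the rational cohomology algebra of $X$.

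The second step is to identify the integral lattice inside this rational algebra. Take $\alpha\in H^*(X)/\Tors$. The claim to aim for is that the image of $H^*(\SP^n X)/\Tors$ is spanned by the \enquote{binomial symmetrizations}. These are the elements of $(H^*(X)^{\otimes n})^{\Sigma_n}$ given by orbit sums
$$\sum_{\sigma\in\Sigma_n/\mathrm{Stab}}\sigma(\alpha_1\otimes\dots\otimes\alpha_n),\qquad \alpha_i\in H^*(X)/\Tors\cup\{1\}.$$
These orbit sums are exactly the images of the basis monomials $\prod\binom{e_s}{n_s}$ of $\Bin''$ under the identification of Proposition \ref{prop:bin_dual_sym}, with $\Bin''^\vee\simeq \Sym'$. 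To prove the claim, I would compare the filtration $\IBin''_{>k}$ with the transfer: pulling back along $X^n\to\SP^n X$ and pushing forward with the transfer gives $\mathrm{tr}\circ\pi^*=n!$. This shows that the lattice lies between the orbit sums and the $\Sigma_n$-invariants of $(H^*(X)/\Tors)^{\otimes n}$. One then needs an argument, using the dual Dold description via $\Sym'_{\le n}$ and the explicit binomial basis, that it is exactly the span of orbit sums. The span of orbit sums is visibly functorial in the algebra $H^*(X)/\Tors$, and products of orbit sums expand as orbit sums of products, so multiplicativity is automatic.

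The main obstacle is this lattice identification. Torsion in $H^*(X)$ feeds into $H^*(\SP^n X)$ through $\mb{L}\Sym$ and could a priori change which rational classes are integral. If a direct argument is hard, I would first reduce to the case where $H_*(X)$ is free. To do this I would use the non-canonical splitting \eqref{der_splt} of $C_*(X,\pt)$ and note that the torsion-free quotient of $\mb{L}_*\Sym_{\le n}$ depends only on the free part of the homology, with $\Ext$ ambiguities affecting only torsion. Then I would check the orbit-sum description by computing $\Sym_{\le n}$ of a free graded module directly.
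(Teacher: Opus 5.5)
\textbf{The paper gives no proof of this statement.} It is quoted from Gugnin's work as motivation. The paper's own related result, Theorem \ref{thrm:splitting_sus}, needs a co-H map $X\to X\vee X$ to make the additive isomorphism $\Bin''\simeq\Gamma''$ multiplicative, so it does not cover general $X$. Your argument therefore has to stand on its own.

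Your first step is fine: $\pi^*$ is injective modulo torsion, and $H^*(\SP^nX;\mb{Q})\cong (H^*(X;\mb{Q})^{\otimes n})^{\Sigma_n}$ multiplicatively. The argument breaks at the step you flag as the main obstacle: the lattice $\pi^*\bigl(H^*(\SP^nX)/\Tors\bigr)$ is \emph{not} the span of orbit sums, already for $X=S^2$. There $\SP^nS^2=\mb{C}P^n$ and $H^*(X^n)=\mb{Z}[a_1,\dots,a_n]/(a_i^2)$. We have $\pi^*h=e_1=a_1+\dots+a_n$, so $\pi^*(h^k)=e_1^k=k!\,e_k$, where $e_k$ is exactly the orbit sum of $a^{\otimes k}\otimes 1^{\otimes(n-k)}$. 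Since $H^{2k}(\mb{C}P^n)=\mb{Z}h^k$, the orbit sum $e_k$ is not in the image for $k\ge 2$; for example $a\otimes a\notin\pi^*H^4(\mb{C}P^2)$.

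The transfer does not give the lower bound you claim. From $\pi^*\circ\mathrm{tr}=\sum_{\sigma\in\Sigma_n}\sigma^*$ you only get the full symmetrizations $|\mathrm{Stab}|\cdot(\text{orbit sum})$ in the image. Worse, when $H^*(X)$ is free and concentrated in even degrees, orbit sums span all of $((H^*X)^{\otimes n})^{\Sigma_n}$. Your sandwich would then force the image to be all invariants, which $\mb{C}P^2$ contradicts. Reducing to free homology cannot help, since the counterexample already has free homology, and that reduction is only additive in any case.

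The error comes from the identification with the binomial basis, which is a cochain-level statement. Levelwise, integer functions on $\SP^n(X_q)$ are the $\Sigma_n$-invariant functions on $X_q^n$. The basis $\prod_s\binom{e_s}{n_s}$ of Proposition \ref{prop:bin_dual_sym} does pull back to orbit sums of tensor products of indicator cochains. But over $\mb{Z}$, taking $\Sigma_n$-invariants does not commute with taking cohomology. The discrepancy changes the torsion-free lattice itself, not just the torsion, and it is exactly where the factors $k!$ come from.

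What you can construct directly are integral classes $e_1(a)=\sum_i 1\otimes\cdots\otimes a\otimes\cdots\otimes 1$ for $a\in H^m(X)$. Take the pullback of the fundamental class along $\SP^nX\to\SP^nK(\mb{Z},m)\to K(\mb{Z},m)$, using a topological abelian group model for $K(\mb{Z},m)$; their products are also integral. In the $\mb{C}P^n$ example these classes exhaust the image. The missing ingredient is a correct description of the integral lattice, together with a proof that it depends only on the ring $H^*(X)/\Tors$. That is the real content of Gugnin's theorem, and neither your proposal nor the paper's machinery supplies it.
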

Further, the work \cite{Puppe} provides a simple proof of the fact that the algebra $H^*(\SP^n X;\mb{Z})$ is
determined by the monoid $C^*(X;\mb{Z})\in D(\Mod(\mb{Z}))$ 
(or equivalently by the \emph{cohomological spectrum} of $X$ in terminology of \cite{Whitehead},\cite{Palermo}).
In general the cohomology algebra of a cartesian square $H^*(X^2;\mb{Z})$ is not determined by $H^*(X;\mb{Z})$ \cite{Palermo},
so one can expect that the ring structure of $H^*(\SP^2 X;\mb{Z})$ is not determined by $H^*(X;\mb{Z})$ alone.

Assume  $(X,\pt)\in \Top_*$ is a connected based space such that there is a map
$f\colon X\ar X\vee X$ inducing the diagonal morphism $f_*\colon \tilde{C}_*(X)\ar \tilde{C}_*(X)\oplus \tilde{C}_*(X)$ in $D(\Ab)$.
Here $\tilde{C}_*(X)$ denotes the reduced chain complex of $C_*(X,\pt)$.
For example any suspension space $X=\Sigma Y$ admits such a map.

\begin{thrm}\label{thrm:splitting_sus}
	For $(X,\pt)\in \Top_*$ and $f\colon X\ar X\vee X$ as above, 
	there is an isomorphism of algebras
	$$H^*(\SP^n X)\simeq \mb{L}^*\Gamma/\Gamma_{>n}(\tilde{C}^*(X))\simeq \oplus_{i\le n}\mb{L}^*\Gamma^{i}(\tilde{C}^*(X)),$$
	of monoids in $D(\Mod(\mb{Z}))$.
	In particular, the algebra 
	$H^*(\SP^n X;\mb{Z})$ is graded by pieces
	$$H^*(X^{\wedge i}/\Sigma_n)\simeq \mb{L}^* \Gamma^i(\tilde{C}^*(X)),i\le n,$$
	and is determined, up to an isomorphism, by $\mb{Z}$-module $\bar{H}^*(X;\mb{Z})$.
\end{thrm}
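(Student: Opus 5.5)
The strategy is to combine the cochain version of Dold's theorem (Proposition \ref{prop:bin_ideals}) with the coalgebraic invariance of Theorem \ref{thrm:LF_LG_cs}. The only extra input is the comultiplication map $f\colon X\ar X\vee X$, which plays the role the Moore-space comonoid structure played before.

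First, I would reduce to a finite simplicial model. By simplicial approximation, replace $(X,\pt)$ by a connected $(X,\pt)\in\simpl{\Fin}_*$ and $f$ by a based simplicial map $f\colon X\ar X\vee X$ after subdivision. The reduced cochains $E:=\mb{Z}^{(X,\pt)}=\mb{Z}\langle X/\pt\rangle^{\vee}$ then form an object of $\cosimpl{\Ab''}$. Dualizing \eqref{eq:sp_chains} as in Proposition \ref{prop:bin_ideals} gives an isomorphism of monoids in $D(\Mod(\mb{Z}))$:
$$C^*(\SP^n X)\simeq N\big(\Bin''/\IBin''_{>n}(E)\big).$$
Note that $\IBin''_{>n}$ is functorial exactly on $\Ab''$, and $E$ is cosimplicial in $\Ab''$.

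Second, I would transport along the isomorphism $\phi''\colon\Bin''\simeq\Gamma''$. Under this isomorphism $\IBin''_{>n}$ corresponds to $\Gamma''_{>n}=\oplus_{i>n}\Gamma^i$, since $\binom{x}{k}\mapsto x^{[k]}$ preserves degrees. This gives an isomorphism of cosimplicial $\kk$-modules
$$\Bin''/\IBin''_{>n}(E)\simeq \Gamma/\Gamma_{>n}(E)=\oplus_{i\le n}\Gamma^i(E).$$
This is only an isomorphism of modules; the products on the two sides differ, since $\Bin''$ is not multiplicatively isomorphic to $\Gamma''$.

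Third, I would recover the multiplication from the cup product, and this is the heart of the proof. The cup product on $C^*(\SP^nX)$ is induced, through the strong monoidal structure, by the diagonal $\mb{Z}\langle X/\pt\rangle\ar\mb{Z}\langle X/\pt\rangle^{\oplus 2}$, i.e. by a map in $\Ab'$ dual to $+$. I would replace it by the homotopic map coming from $f$, as in the discussion following Corollary \ref{cor:LF_LG_algebras}. The map $f$ induces $F:=\mb{Z}\langle f/\pt\rangle^{\vee}\colon E\oplus E\ar E$, which is a morphism in $\cosimpl{\Ab''}$, so $\phi''$ commutes with $F$ by naturality. The monoidal structure is compatible because $\phi''$ is monoidal. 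Since $F$ is, in $D(\Ab)$, the sum map on reduced cochains, the algebra structure on $N(\Gamma(E))$ transported via $F$ is the derived divided power algebra $\mb{L}\Gamma(\tilde C^*(X))$ with its usual product.

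The main obstacle is compatibility with truncation and the grading. The map $F$ does not preserve the quotient by $\Gamma_{>n}$ multiplicatively in the naive sense. I would argue via Proposition \ref{prop:bin_dual_sym}: the product on $\Bin''$ sends $\IBin''_{>n}\otimes\IBin''_{>m}$ to $\IBin''_{>\max(n,m)}$, so the quotient is an algebra, and the grading $\oplus_{i\le n}\Gamma^i$ is multiplicative on the $\Gamma$ side because $\Gamma^i\cdot\Gamma^j\subset\Gamma^{i+j}$. Truncating at $>n$ is then an algebra quotient. Finally, since $\tilde C^*(X)\simeq\oplus_i \bar H^i(X)[-i]$ (non-canonically), $\mb{L}^*\Gamma^i(\tilde C^*(X))$ depends only on the groups $\bar H^*(X)$. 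The top piece is identified with $H^*(X^{\wedge i}/\Sigma_i)$ via the filtration $\SP^{i-1}X\subset\SP^iX$.
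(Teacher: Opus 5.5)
Your route is the paper's: Proposition \ref{prop:bin_ideals}, replacing the product by the one induced by the $\Ab''$-morphism $F=\mb{Z}\langle f/\pt\rangle^{\vee}$, transporting along $\phi''$, and using that $\Gamma_{>n}$, unlike $\IBin''_{>n}$, is a subfunctor on all of $\Ab$. (A minor slip: the chain-level diagonal lies in $\Ab''$. It is its dual, the addition $+\colon E\oplus E\ar E$ on cochains, that lies in $\Ab'$ and not in $\Ab''$.)

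\textbf{The gap is in the truncation step, and you misplace the difficulty there.} Because $F$ is a morphism of $\cosimpl{\Ab''}$, $\Bin''(F)$ preserves $\IBin''_{>n}$ and $\Gamma(F)$ preserves $\Gamma_{>n}$. So the $F$-induced products descend to both quotients, and $\phi''$ intertwines them strictly; nothing goes wrong there. The real problem is on the $\Bin''$ side. The cup product on $\Bin''/\IBin''_{>n}(E)$ is only the descent of the product $\Bin(+)$ on $\Bin''(E)$, since the diagonal of $\SP^nX$ does not factor through $\SP^n(X\vee X)$. The identification $\Bin(+)\simeq\Bin''(F)$ comes from a homotopy $+\simeq F$ in $\cosimpl{\Ab}$, along which $\IBin''_{>n}$ is not functorial. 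Already $\Bin(+)$ sends $x_1x_2\in\IBin''_{>1}(\mb{Z}^2)$ to $x^2=2\binom{x}{2}+x\notin\IBin''_{>1}(\mb{Z})$. Hence multiplicativity of $\phi''$ on $\Bin''(E)$ only gives agreement of the two products on the quotient after precomposing with $q\otimes q$, where $q$ is the quotient map. Observing that both quotients are algebras does not remove this.

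\textbf{The missing ingredient is the additive splitting of the filtration, which the paper's proof invokes.} We have $\Bin''=\Bin^{\le n}\oplus\IBin''_{>n}$ as functors on $\Ab''$, so $q$ has a cosimplicial section $s$. Each product on the quotient then equals $q\circ\mu\circ(s\otimes s)$ for the corresponding product $\mu$ on $\Bin''(E)$, and the homotopy on $\Bin''(E)$ descends. On cohomology, this is the statement that $H^*(\SP^\infty X)\ar H^*(\SP^nX)$ is surjective with kernel the ideal $\mb{L}^*\IBin''_{>n}(E)$. A ring isomorphism carrying this ideal onto $\mb{L}^*\Gamma_{>n}(E)$ therefore induces a ring isomorphism of the quotients.

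Alternatively, you can check geometrically that $\SP^nX\ar\SP^n(X\vee X)\subset\SP^nX\times\SP^nX$ induces the cup product. Its components are $\SP^n(c_i\circ f)$, with $c_i\colon X\vee X\ar X$ the collapse maps. Since $c_i\circ f$ induces the identity on $\tilde{C}_*(X)$, these components act as the identity on $C_*(\SP^nX)$ by naturality of \eqref{eq:sp_chains}.
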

\begin{proof}
Recall that by \ref{prop:bin_ideals} the dual form of \eqref{dold_hom} provides a natural identification of ideals
$$\mb{L}^*\IBin''_{>n}(\mb{Z}\langle X/\pt\rangle^\vee)\simeq H^*(\SP^\infty X,\SP^n X;\mb{Z})\subset H^*(\SP^{\infty} X;\mb{Z}),$$
which gives  additively split (decreasing) filtration. 
One gets
an isomorphism of algebras
$$H^*(\SP^n X;\mb{Z})\simeq \mb{L}^*\Bin''/\IBin''_{>n}(\mb{Z}\langle X/\pt\rangle^\vee).$$

There is a morphism  $\Delta\colon X\ar X\vee X$ inducing $\tilde{C}_*(X)\ar \tilde{C}_*(X)\oplus \tilde{C}_*(X)$ equal to the diagonal
in $D(\Ab)$. 
Since $\Delta\in \simpl{\Ab'}$, the morphism $\Delta^\vee\colon \mb{Z}\langle X/\pt\rangle^{\oplus 2}\ar 
\mb{Z}\langle X/\pt\rangle\in \Ho(\cosimpl{\Ab''})$ is the 
 addition.
In particular the additive isomorphism 
$$\Bin''(\mb{Z}\langle X/\pt\rangle^\vee)\simeq \Gamma''(\mb{Z}\langle X/\pt\rangle^\vee)$$
	is in fact multiplicative in $\Ho(\cosimpl{\Mod(\mb{Z})})$.
Then isomorphism
$$H^*(\SP^n X;\mb{Z})\simeq 
	\mb{L}^*\Bin''/\IBin''_{>n}(\tilde{C}^*(X)),$$
translates to an isomorphism 
	$$H^*(\SP^n X;\mb{Z})\simeq \mb{L}^*\Gamma''/\IGamma''_{>n}(\tilde{C}^*(X)).$$
Though $\IBin''_{>n}$ doesn't extend to a functor from $\Ab$,
the RHS $\mb{L}^*\Gamma/\IGamma_{>n}(\tilde{C}^*(X))$ does.
So, for the suspension space $X$ we obtain an identification of algebras:
$$H^*(\SP^n X;\mb{Z})\simeq\mb{L}^*\Gamma/\IGamma_{>n}(\tilde{H}^*(X;\mb{Z})).$$
\end{proof}

As a corollary of the above proof we see that the multiplication
	$$H^*(\SP^\infty X,\SP^{n-1}X;\mb{Z})\otimes H^*(\SP^\infty X,\SP^{m-1}X;\mb{Z})\lar 
	H^*(\SP^\infty X,\SP^{\max(n,m)-1};\mb{Z}),$$
	factorizes through the inclusion $H^*(\SP^\infty X,\SP^{n+m-1}X;\mb{Z})\ari H^*(\SP^{\infty},\SP^{\max(n,m)-1};\mb{Z})$.

\printbibliography

\bigskip

\begin{flushleft}
\end{flushleft}

\end{document}